\theoremstyle{plain}
\newtheorem{theorem}{Theorem}[section]
\newtheorem{definition}[theorem]{Definition}
\newtheorem{proposition}[theorem]{Proposition}
\newtheorem{lemma}[theorem]{Lemma}
\newtheorem{remark}[theorem]{Remark}
\numberwithin{theorem}{section}
\numberwithin{equation}{section}
\newcommand{\dis}{\displaystyle}
\newcommand{\average}{{\mathchoice {\kern1ex\vcenter{\hrule height.4pt
				width 6pt depth0pt} \kern-9.7pt} {\kern1ex\vcenter{\hrule
				height.4pt width 4.3pt depth0pt} \kern-7pt} {} {} }}
\newcommand{\n }{\nabla }
\newcommand{\be}{\begin{equation}}
\newcommand{\ee}{\end{equation}}
\newcommand{\p}{p}
\newcommand{\eps}{\varepsilon}
\renewcommand{\epsilon}{\varepsilon}
\title[]{Bifurcation results for nonlinear eigenvalue problems involving the $(p,q)$-Laplace operator.}
\author{Emmanuel Wend-Benedo Zongo and Bernhard Ruf}
\begin{document}
\address{Universit\'e Paris-Saclay, Institut de Math\'ematiques d'Orsay, France.}
\email{emmanuel.zongo@universite-paris-saclay.fr}

\address{Universit\`a degli studi di Milano, Dipartimento di Matematica ``Federigo Enriques", Italy.}
\email{bernhard.ruf@unimi.it}
\begin{abstract}
    In this paper, we analyze an eigenvalue problem for nonlinear elliptic operators involving homogeneous Dirichlet boundary conditions in a open smooth bounded domain.  We prove bifurcation results from trivial solutions and from infinity for the considered nonlinear eigenvalue problem. We also show the existence of multiple solutions of the nonlinear problem using variational methods.\\
\\
Keywords: nonlinear operators, variational eigenvalues, bifurcation from trivial solution, bifurcation from infinity, multiple solutions.\\
2010 Mathematics Subject Classification: 35J20, 35J92, 35J25.
\end{abstract}
\maketitle
\tableofcontents

\section{Introduction}
 Assume $\Omega\subset\mathbb{R}^N$ ($N\geq 2$) is an open bounded domain with smooth boundary $\partial\Omega.$
In \cite{ZB}, the authors investigated the asymptotic behavior of the spectrum and the existence of multiple solutions of the following nonlinear eigenvalue problem
\begin{equation}\label{article-emma-ruf}
  \begin{cases}
    -\Delta_pu-\Delta u=\lambda u ~~~\text{on}~~\Omega,\\
    u=0~~~~~\text{on}~~\partial\Omega,
  \end{cases}
\end{equation}
where $-\Delta_p$ denotes the $p$-Laplace operator.  In \cite{ZB} it was shown that for $p>2$ there exist eigenvalue branches emanating from $(\lambda_k,0),$ and for $1<p<2$ there exist eigenvalue branches emanating from $(\lambda_k,\infty)$, where $\lambda_k$ stands as the $k$-th Dirichlet eigenvalue of the Laplacian.
In this paper, we consider the following q-homogenous eigenvalue problem with a perturbation by a p-Laplace term:
\begin{equation}\label{e1}
  \begin{cases}
  -\Delta_pu-\Delta_qu=\lambda|u|^{q-2}u~~\text{in}~~\Omega\\
  u=0~~~~\text{on}~~\partial\Omega.
  \end{cases}  
\end{equation}
 The operator $\Delta_s,$ formally defined as $\Delta_su:=\text{div}(|\nabla u|^{s-2}\nabla u)$ for $s=p,q\in(1,\infty)$ is the $s$-Laplacian, $\lambda\in\mathbb{R}$ is a parameter. The $(p,q)$-Laplace operator given by $-\Delta_p-\Delta_q$ appears in a wide range of applications that include biophysics \cite{FP}, plasma physics \cite{SM} and reaction-diffusion equations \cite{AR, CL}. The $(p,q)$-Laplace operator has been widely studied; for some  results related to our studies,  see e.g.,  \cite{BT1,BT2, CD, Ta, MM}. We say that $\lambda$ is a ``first eigenvalue'', if the corresponding eigenfunction $u$ is positive or negative. 
 \par \medskip
 Note that by taking $q=2$ in equation (\ref{e1}), we recover the case of equation (\ref{article-emma-ruf}). We remark however that for $q = 2$ equation \eqref{e1} describes bifurcation (caused by a $p$-Laplace operator) from the {\it linear} equation $-\Delta u - \lambda u$, while for $q \not= 2$  we prove for equation \eqref{e1} the existence of bifurcation branches (again forced by a $p$-Laplace operator) from the eigenvalues of a {\it nonlinear, but $q$-homogenous} equation. Indeed, it was shown in \cite{IPA} that there exists a nondecreasing sequence of {\it variational} positive eigenvalues $\{\lambda_k^D(q)\}_k$ tending to $+\infty$ as $k\rightarrow \infty$ for the following nonlinear and $q$-homogenous eigenvalue problem
\begin{equation}\label{dirichletlap}
\left\{
\begin{array}{l}
-\Delta_q u=\displaystyle \lambda  |u|^{q-2}u~~\text{in $\Omega$},\\
u = \displaystyle 0~~~~~~~~\text{on $\partial\Omega$}.
\end{array}
\right.
\end{equation}
Moreover, it is known that the first eigenvalue of problem (\ref{dirichletlap}) is characterized in the variational point of view by,
$$
\lambda^D_1(q):=\inf_{u\in W^{1,q}_0(\Omega)\backslash\{0\}}\left\{
\frac{\int_{\Omega}|\nabla u|^q~dx}{\int_{\Omega}|u|^q~dx}\right\}
.$$  
We consider the sets $$D_1(q)=\{u\in W^{1,q}_0(\Omega)\backslash\{0\}~:~\int_{\Omega}|u|^qdx=1 \},$$
and $\Sigma,$ the class of closed symmetric (with respect to the origin) subsets of $ W^{1,q}_0(\Omega)\backslash\{0\},$ i.e,
$$\Sigma=\{A\subset W^{1,q}_0(\Omega)\backslash\{0\}:~~A~~\text{closed},~~ A=-A\}.$$ For $A\in\Sigma,$ we define
$$\gamma(A)=\inf\{k\in\mathbb{N}~:~\exists\varphi\in C(A, \mathbb{R}^k\backslash\{0\}),~\varphi(-x)=-\varphi(x)\}.$$ If such $\gamma(A)$ does not exist, we then define $\gamma(A)=+\infty.$ The number $\gamma(A)\in\mathbb{N}\cup\{+\infty\}$ is called the {\it Krasnoselski genus} of $A$.
Let us consider the family of sets 
$$\Sigma_k=\{A\subset\Sigma\cap D_1(q):~\gamma(A)\geq k\}
.$$ 
Following the proof in \cite{IPA}, one shows that one has the following variational characterization of $\lambda^D_k(q)$,
for $k\in\mathbb{N}$,
$$
\lambda^D_k(q)=\displaystyle{\inf_{A\in\Sigma_k}\sup_{u\in A}\int_{\Omega}|\nabla u|^qdx}
.$$
\par \medskip
In this paper, we discuss the nonlinear variational  eigenvalues of equation \eqref{e1}.  
Our main results are:
\par \medskip \noindent
\begin{itemize}
\item[1)] For every fixed $\rho > 0$ there exists a sequence of eigenvalues $\big(\lambda^D_k(p,q;\rho)\big)_k$ with corresponding eigenfunctions $\pm u_k(p,q;\rho)$ satisfying  $\int_\Omega |u_k(p,q;\rho)|^qdx = \rho$, with $\lambda^D_k(p,q;\rho) \to +\infty$ as $k \to \infty$.
\par \medskip \noindent
\item[2)] The variational eigenvalues $\lambda^D_k(q)$ of equation \eqref{dirichletlap} are bifurcation points from $0$ if $p > q$, and bifurcation points from infinity for $1 < p < q$, for the nonlinear eigenvalues $\lambda_k(p,q;\rho)$.
\par \medskip \noindent
\item[3)] For fixed $\lambda \in (\lambda^D_k(q),\lambda_{k+1}^D(q))$ there exist $k$ eigenvalues of \eqref{e1} with \\ $\lambda = \lambda^D_1(p,q;\rho_1)= \dots = \lambda^D_k(p,q;\rho_k)$, with corresponding eigenfunctions $\pm u_k(\p,q;\rho)$ such that $\int_\Omega |u_k|^q = \rho_k$
\end{itemize}
\par \bigskip
 The paper is organized as follows. In section \ref{S1}, we discuss the variational spectrum of the nonlinear problem (\ref{e1}) for $u \in D_\rho$ with fixed $\rho > 0$. In section 3 we give some auxiliary results, and in section 4 we discuss the first eigenvalues of equation \eqref{e1}. Then, in section 5 we discuss the bifurcation phenomena, and finally in section \ref{mult} we prove the multiplicity result.
 \par \medskip
 
 The standard norm of the Lebesgue space $L^s(\Omega)$ and the Sobolev space $W^{1,s}_0(\Omega)$ will be denoted respectively by $\|\cdot \|_s = (\int_{\Omega}|\cdot|^sdx)^{1/s}$ and $\|\cdot \|_{1,s}=(\int_{\Omega}|\nabla(\cdot)|^sdx)^{1/s}$. We also denote by $\langle\, ,\rangle,$ the duality product between $W^{1,s}_0(\Omega)$ and its dual.
 \par \medskip
\section{The spectrum of problem (\ref{e1})}\label{S1}
In this section we show that equation \eqref{e1} has for every given $\rho > 0$ a sequence of eigenvalues $\lambda_k^D(p,q,\rho)$, with associated eigenfunctions $u_k(p,q,\rho)$ and $\int_\Omega |u_k(p,q,\rho)|^qdx = \rho$.
\begin{definition}
We say that $u\in W^{1,p}_0(\Omega)$ $($if $p>q$ $)$ or $u\in W^{1,q}_0(\Omega)$ $($if $p<q$ $)$  is a weak solution of problem (\ref{e1}) if the following integral equality holds:
\begin{equation}\label{e2}
    \int_{\Omega}|\nabla u|^{p-2}\nabla u\cdot\nabla v~dx+\int_{\Omega}|\nabla u|^{q-2}\nabla u\cdot\nabla v~dx=\lambda\int_{\Omega}|u|^{q-2}u\,v~dx,
\end{equation}
for all $v\in W^{1,p}_0(\Omega)\cap  W^{1,q}_0(\Omega)$. 
\end{definition}
\par \medskip\noindent
We say that $\lambda\in\mathbb{R}$ is an eigenvalue of problem (\ref{e1}) if there exists an eigenfunction $u_{\lambda}\in (W^{1,p}_0(\Omega)\cap  W^{1,q}_0(\Omega))\backslash\{0\}$ associated to $\lambda$ such that relation (\ref{e2}) holds.
\par \bigskip
We say that $\lambda^D_1(p,q,\rho)$ is a first eigenvalue of equation \eqref{e1} if the corresponding eigenfunction $u_1(p,q;\rho)$ is a minimizer of the following expression, for some $\rho > 0$,
\begin{equation}\label{e3}
 c_1(p,q;\rho):=\inf_{\{u\in  W^{1,p}_0(\Omega)\cap  W^{1,q}_0(\Omega), \int_\Omega |u|^q = \rho\}} \Big(\frac{1}{p}\int_{\Omega}|\nabla u|^pdx+\frac{1}{q}\int_{\Omega}|\nabla u|^qdx \Big). 
\end{equation}
Note that $\lambda^D_1(p,q;\rho)$ satisfies
$$
\int_{\Omega}|\nabla u_1|^pdx+\int_{\Omega}|\nabla u_1|^qdx
= \lambda^D_1(p,q;\rho) \int_\Omega |u_1|^qdx = \lambda^D_1(p,q;\rho) \rho.
$$
\par \smallskip
\begin{proposition}
If it holds $\lambda\leq \lambda^D_1(q)$ then problem (\ref{e2}) has no nontrivial solutions.
\end{proposition}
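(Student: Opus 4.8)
The plan is to test the weak formulation \eqref{e2} against the solution $u$ itself and then to compare the resulting energy identity with the Rayleigh-quotient characterization of $\lambda^D_1(q)$. The whole argument is a single energy estimate, so the work is in setting it up cleanly rather than in overcoming any real difficulty.

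First I would check that $u$ is an admissible test function. On the bounded domain $\Omega$ the admissible space $W^{1,p}_0(\Omega)\cap W^{1,q}_0(\Omega)$ actually contains any eigenfunction: whichever of $p,q$ is the larger exponent, the corresponding Sobolev space continuously embeds into the one with the smaller exponent, so $u$ lies in both spaces regardless of the sign of $p-q$. Hence the choice $v=u$ is legitimate in \eqref{e2}, and it produces the energy identity
\begin{equation*}
\int_{\Omega}|\nabla u|^p\,dx+\int_{\Omega}|\nabla u|^q\,dx=\lambda\int_{\Omega}|u|^q\,dx.
\end{equation*}

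Next I would invoke the variational definition of the first $q$-Dirichlet eigenvalue, which furnishes the lower bound $\int_{\Omega}|\nabla u|^q\,dx\ge \lambda^D_1(q)\int_{\Omega}|u|^q\,dx$ valid for every $u\in W^{1,q}_0(\Omega)$. Substituting this into the energy identity and using the hypothesis $\lambda\le\lambda^D_1(q)$ gives
\begin{equation*}
\int_{\Omega}|\nabla u|^p\,dx+\lambda^D_1(q)\int_{\Omega}|u|^q\,dx\le \lambda\int_{\Omega}|u|^q\,dx\le \lambda^D_1(q)\int_{\Omega}|u|^q\,dx,
\end{equation*}
so that $\int_{\Omega}|\nabla u|^p\,dx\le 0$. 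Since the integrand is nonnegative, $\nabla u=0$ almost everywhere, and because $u$ has vanishing boundary trace (Poincaré's inequality), this forces $u\equiv 0$; hence no nontrivial solution can exist.

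There is essentially no hard obstacle here. The only two points deserving a word of care are the admissibility of $u$ as a test function, handled by the Sobolev embedding between the two spaces on a bounded domain, and the passage from $\int_{\Omega}|\nabla u|^p\,dx=0$ to $u\equiv 0$, which rests solely on the zero trace of $u$. I would also remark that the same computation shows that any eigenvalue of \eqref{e1} must in fact satisfy $\lambda>\lambda^D_1(q)$, which is precisely why the nontrivial spectrum studied in the sequel lies strictly above the first $q$-eigenvalue.
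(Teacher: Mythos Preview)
Your argument is correct and follows essentially the same route as the paper: test \eqref{e2} with $v=u$, combine the resulting identity with the variational inequality $\int_\Omega|\nabla u|^q\,dx\ge\lambda^D_1(q)\int_\Omega|u|^q\,dx$, and conclude $\int_\Omega|\nabla u|^p\,dx\le 0$. The paper treats the cases $\lambda<\lambda^D_1(q)$ and $\lambda=\lambda^D_1(q)$ separately, whereas you handle both at once via the chain of inequalities; this is a purely cosmetic difference.
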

\begin{proof}
Suppose by contradiction that there exists $\lambda<\lambda^D_1(q)$ which is an eigenvalue of problem (\ref{e1}) with $u_{\lambda}\in (W^{1,p}_0(\Omega)\cap  W^{1,q}_0(\Omega))\backslash\{0\}$ the corresponding eigenfunction. Let $v=u_{\lambda}$ in relation (\ref{e2}), we then have $$\int_{\Omega}|\nabla u_{\lambda}|^pdx+\int_{\Omega}|\nabla u_{\lambda}|^qdx=\lambda\int_{\Omega}|u_{\lambda}|^qdx.$$ On the other hand, we have
\begin{equation}\label{p3}
   \lambda^D_1(q) \int_{\Omega}|u_{\lambda}|^qdx\leq \int_{\Omega}|\nabla u_{\lambda}|^qdx,
\end{equation}
and subtracting by $\lambda \displaystyle{\int_{\Omega}}|u_{\lambda}|^qdx$ from both sides of (\ref{p3}), it follows that 
$$(\lambda^D_1(q)-\lambda)\int_{\Omega}|u_{\lambda}|^qdx\leq \int_{\Omega}|\nabla u_{\lambda}|^qdx-\lambda\int_{\Omega}|u_{\lambda}|^qdx.$$ This implies that $$0<(\lambda^D_1(q)-\lambda)\int_{\Omega}|u_{\lambda}|^qdx\leq \int_{\Omega}|\nabla u_{\lambda}|^qdx+\int_{\Omega}|\nabla u_{\lambda}|^pdx-\lambda\int_{\Omega}|u_{\lambda}|^qdx=0.$$ Hence $\lambda<\lambda^D_1(q)$ is not an eigenvalue of problem (\ref{e1}) with $u_{\lambda}\neq 0.$
\par \medskip
Now, assume that $\lambda=\lambda^D_1(q)$ is an eigenvalue of equation (\ref{e1}), thus there exists an eigenfunction $u_{\lambda^D_1(q)}\in (W^{1,p}_0(\Omega)\cap  W^{1,q}_0(\Omega))\backslash\{0\} $ associated to $\lambda^D_1(q)$ such that relation (\ref{e2}) holds. Letting $v=u_{\lambda^D_1(q)}$ in (\ref{e2}), we obtain 
$$\int_{\Omega}|\nabla u_{\lambda^D_1(q)}|^pdx+\int_{\Omega}|\nabla u_{\lambda^D_1(q)}|^qdx=\lambda^D_1(q)\int_{\Omega}|u_{\lambda^D_1(q)}|^qdx.$$ Since $$\lambda^D_1(q)\int_{\Omega}|u_{\lambda^D_1(q)}|^qdx\leq \int_{\Omega}|\nabla u_{\lambda^D_1(q)}|^qdx,$$ it follows that $$\int_{\Omega}|\nabla u_{\lambda^D_1(q)}|^pdx+\int_{\Omega}|\nabla u_{\lambda^D_1(q)}|^qdx\leq \int_{\Omega}|\nabla u_{\lambda^D_1(q)}|^qdx$$ and then $u_{\lambda^D_1(q)}=0$ by the Poincar\'e inequality. This concludes the proof.
\end{proof}
\begin{proposition}
The first eigenfunctions $u_1^{\lambda}$ associated to some $\lambda\in (\lambda^D_1(q),\infty)$ are positive or negative in $\Omega.$
\begin{proof}
Let $u^{\lambda}_1\in (W^{1,p}_0(\Omega)\cap W^{1,1}_0(\Omega))\setminus\{0\}$ be a first eigenfunction associated to $\lambda\in(\lambda^D_1(q),\infty), $ then
$$\int_{\Omega}|\n u^{\lambda}_1|^pdx+\int_{\Omega}|\n u^{\lambda}_1|^qdx=\lambda\int_{\Omega}| u^{\lambda}_1|^qdx,$$ which means that $u^{\lambda}_1$ achieves the infimum in the definition of $\mu_1(p,q;\rho)$, with $\rho = \frac 1q \int_\Omega |u|^q$. On the other hand, we have $\big\|\n|u^{\lambda}_1|\big\|_{1,s}=\|\n u^{\lambda}_1\|_{1,s}$ for $s=p,q$ and $\big\||u^{\lambda}_1|\big\|_q=\|u^{\lambda}_1\|_q$ since $\big|\n|u^{\lambda}_1|\big|=|\n u^{\lambda}_1| $ and $\big||u^{\lambda}_1|\big|=| u^{\lambda}_1|$ almost everywhere. Then, it follows that $|u^{\lambda}_1|$ achieves also the infimum in the definition of $\mu_1(p,q;\rho).$ Therefore by the Harnack inequality, we have $|u^{\lambda}_1|>0$ for all $x\in\Omega$ and consequently $u^{\lambda}_1$ is either positive or negative in $\Omega.$
\end{proof}
\end{proposition}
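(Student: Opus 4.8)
The plan is to show that a first eigenfunction cannot change sign, by comparing it with its absolute value and then invoking a positivity principle. By the definition preceding the statement, $u_1^\lambda$ minimizes the energy
$$
\Phi(u) := \frac1p\int_\Omega |\n u|^p\,dx + \frac1q\int_\Omega |\n u|^q\,dx
$$
over the constraint set $\{\,u : \int_\Omega |u|^q\,dx = \rho\,\}$ with $\rho = \int_\Omega |u_1^\lambda|^q\,dx$, and in particular it is a weak solution of (\ref{e1}).

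The first step is to observe that both $\Phi$ and the constraint are invariant under the map $u \mapsto |u|$. For a Sobolev function one has $\n|u| = (\text{sgn}\,u)\,\n u$ almost everywhere, whence $\big|\n|u|\big| = |\n u|$ a.e.; therefore $\|\n|u_1^\lambda|\|_{1,s} = \|\n u_1^\lambda\|_{1,s}$ for $s=p,q$, while trivially $\||u_1^\lambda|\|_q = \|u_1^\lambda\|_q$. Hence $\Phi(|u_1^\lambda|) = \Phi(u_1^\lambda)$ and $|u_1^\lambda|$ still meets the constraint, so $|u_1^\lambda|$ is itself a minimizer, and thus a non-negative weak solution $w := |u_1^\lambda| \ge 0$ of (\ref{e1}).

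The second step is to upgrade non-negativity to strict positivity. Since $\lambda > \lambda^D_1(q) > 0$, the function $w \ge 0$ satisfies $-\Delta_p w - \Delta_q w = \lambda w^{q-1} \ge 0$, i.e.\ $w$ is a non-negative weak supersolution of the $(p,q)$-Laplacian. Using the interior regularity for $(p,q)$-type operators, $w$ is continuous (indeed locally $C^{1,\alpha}$), and then the Harnack inequality — equivalently, the strong maximum principle of V\'azquez type — forces $w$ to be either identically zero or strictly positive in $\Omega$. As $w \not\equiv 0$, we conclude $|u_1^\lambda| = w > 0$ everywhere. Since $u_1^\lambda$ is continuous and never vanishes, it cannot change sign and is therefore either positive or negative throughout $\Omega$.

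The delicate point is the positivity principle in the last step: the $(p,q)$-operator is non-homogeneous, so one cannot directly quote the classical Harnack inequality of Serrin and Trudinger for the single $p$-Laplacian. Instead one must rely on the versions available for quasilinear operators satisfying the appropriate structural (natural growth and monotonicity) conditions, and check that $-\Delta_p - \Delta_q$ meets these hypotheses; the energy invariance and the gradient identity for $|u|$ are by contrast entirely routine.
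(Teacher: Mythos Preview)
Your proof is correct and follows essentially the same route as the paper: both show that $|u_1^\lambda|$ is again a minimizer of the constrained energy (via $|\nabla|u||=|\nabla u|$ a.e.) and then invoke a Harnack-type positivity result to rule out interior zeros. Your version is in fact a bit more careful than the paper's, since you flag that the $(p,q)$-Laplacian is non-homogeneous and explain why the Harnack inequality/strong maximum principle still applies under the appropriate structural hypotheses, whereas the paper simply cites ``the Harnack inequality'' without further comment.
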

The Palais-Smale condition plays an important role in the minimax argument, and we recall here its definition.
\begin{definition}
A $C^1$ functional $I$ defined on a smooth submanifold $M$ of a Banach space $X$ is said to satisfy the Palais-Smale condition on $M$ if any sequence $\{u_n\}\subset M$ satisfying that $\{I(u_n)\}_n$ is bounded and $\big(I\big|_M\big)'(u_n)\rightarrow 0$ as $n\rightarrow +\infty$ has a convergent subsequence.
\end{definition}
Next, we start the discussion about the existence of eigenvalues for problem (\ref{e1}). We note that these eigenvalues depend on $\rho(u)=\int_{\Omega}|u|^qdx.$ The proofs of the following two theorems rely on \cite[Proposition 10.8]{AM}.
\begin{theorem}\label{sequence1}
Let $p>q.$ Then, for a given $\rho>0,$ there exists a nondecreasing sequence of critical values $c_k(p,q;\rho)$ with associated nonlinear eigenvalues $\lambda^D_k(p,q;\rho)\to +\infty,$ as $k\to +\infty$ and with corresponding eigenfunctions $u_k(p,q;\rho)\in W^{1,p}_0(\Omega)$ for problem (\ref{e1}).
\end{theorem}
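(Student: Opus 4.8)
The plan is to realize the eigenvalues of \eqref{e1} as constrained critical values of the even energy functional
$$J(u) := \frac{1}{p}\int_{\Omega}|\nabla u|^p\,dx + \frac{1}{q}\int_{\Omega}|\nabla u|^q\,dx$$
on the symmetric manifold $D_\rho := \{u \in W^{1,p}_0(\Omega) : \int_\Omega |u|^q\,dx = \rho\}$, and then to apply the equivariant Lusternik--Schnirelmann theorem \cite[Proposition 10.8]{AM}. Since $p>q$ and $\Omega$ is bounded, one has the continuous embedding $W^{1,p}_0(\Omega)\hookrightarrow W^{1,q}_0(\Omega)$, so I would take $X=W^{1,p}_0(\Omega)$ as the ambient Banach space; on $X$ the functional $J$ is of class $C^1$, even, nonnegative and coercive, while the constraint $G(u):=\int_\Omega|u|^q\,dx$ is $C^1$ with $G'(u)\neq 0$ on $D_\rho$, so $D_\rho$ is a complete $C^1$ symmetric submanifold of $X$ that avoids the origin.

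First I would verify the Palais--Smale condition for $J$ restricted to $D_\rho$, which I expect to be the main obstacle. Given a sequence $\{u_n\}\subset D_\rho$ with $J(u_n)$ bounded and $(J|_{D_\rho})'(u_n)\to 0$, the boundedness of $J(u_n)$ immediately controls $\|\nabla u_n\|_p$, so $\{u_n\}$ is bounded in the reflexive space $X$ and, up to a subsequence, $u_n \weak u$ in $X$. The compact embedding $W^{1,p}_0(\Omega)\hookrightarrow\hookrightarrow L^q(\Omega)$ (valid since $q<p\le p^*$) then gives $u_n\to u$ strongly in $L^q(\Omega)$, so that $\int_\Omega|u|^q\,dx=\rho$ and $u\in D_\rho$. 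Writing the constrained derivative through a Lagrange multiplier $\mu_n$ (which stays bounded, since $\langle G'(u_n),u_n\rangle=q\rho$ is bounded away from zero), the condition $(J|_{D_\rho})'(u_n)\to 0$ reads $J'(u_n)-\mu_n G'(u_n)\to 0$ in $X^*$; testing this with $u_n-u$ and exploiting the strong $L^q$-convergence together with the $(S_+)$ property of the operator $-\Delta_p-\Delta_q$ should upgrade the weak convergence to strong convergence $u_n\to u$ in $X$. This is the technically delicate point, as it requires the monotonicity and uniform-convexity estimates for the $(p,q)$-gradient.

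With the Palais--Smale condition in hand, I would apply \cite[Proposition 10.8]{AM} to the even functional $J$ on $D_\rho$, using the Krasnoselski genus $\gamma$ and the classes $\Sigma_k$ (now understood as the symmetric subsets of $D_\rho$ of genus at least $k$), to produce the nondecreasing sequence of critical values
$$c_k(p,q;\rho) := \inf_{A\in\Sigma_k}\ \sup_{u\in A} J(u).$$
Each $c_k$ is a critical value of $J|_{D_\rho}$, so there is $u_k=u_k(p,q;\rho)\in D_\rho$ with $J(u_k)=c_k$ and, by the Lagrange multiplier rule, $-\Delta_p u_k-\Delta_q u_k=\mu_k\,q\,|u_k|^{q-2}u_k$; setting $\lambda^D_k(p,q;\rho):=q\mu_k$ shows that $u_k\in W^{1,p}_0(\Omega)$ is an eigenfunction of \eqref{e1} with $\int_\Omega|u_k|^q\,dx=\rho$, and testing the equation with $u_k$ gives $\lambda^D_k(p,q;\rho)\,\rho=\|\nabla u_k\|_p^p+\|\nabla u_k\|_q^q$.

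Finally, to show $\lambda^D_k(p,q;\rho)\to+\infty$ I would compare with the pure $q$-Laplacian spectrum. From the pointwise bound $J(u)\geq \frac1q\int_\Omega|\nabla u|^q\,dx$ one gets $c_k\geq \inf_{A\in\Sigma_k}\sup_{u\in A}\frac1q\int_\Omega|\nabla u|^q\,dx$. The odd homeomorphism $u\mapsto\rho^{1/q}u$ maps $D_1(q)$ onto $D_\rho$, preserves the genus, and multiplies $\int_\Omega|\nabla u|^q\,dx$ by $\rho$, so by the variational characterization of $\lambda^D_k(q)$ recalled above this last minimax equals $\frac{\rho}{q}\,\lambda^D_k(q)$. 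Hence $c_k\geq \frac{\rho}{q}\lambda^D_k(q)\to+\infty$, since $\lambda^D_k(q)\to+\infty$. Combining this with the identity of the previous step and the estimate $c_k\leq \frac1q\big(\|\nabla u_k\|_p^p+\|\nabla u_k\|_q^q\big)=\frac1q\lambda^D_k(p,q;\rho)\,\rho$ (which uses $\frac1p<\frac1q$) yields $\lambda^D_k(p,q;\rho)\geq \frac{q}{\rho}c_k\geq \lambda^D_k(q)\to+\infty$, completing the argument.
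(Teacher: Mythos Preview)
Your proposal is correct and follows essentially the same approach as the paper: the same functional $J=I$ on the same constraint manifold $D_\rho$, the same verification of the Palais--Smale condition via boundedness, weak convergence, bounded Lagrange multipliers and the $(S_+)$/monotonicity property of $-\Delta_p-\Delta_q$, and the same appeal to \cite[Proposition~10.8]{AM} for the minimax values $c_k$. The only notable difference is in the divergence step: the paper argues $c_k\to\infty$ by a compactness argument and then notes $\lambda^D_k(p,q;\rho)\,\rho>c_k$, whereas you obtain the sharper lower bound $c_k\ge \tfrac{\rho}{q}\lambda^D_k(q)$ by comparison with the $q$-Laplacian spectrum (strictly speaking this minimax over $W^{1,p}_0$-sets is $\ge \tfrac{\rho}{q}\lambda^D_k(q)$ rather than equal, since $W^{1,p}_0\subset W^{1,q}_0$, but the inequality is the direction you need).
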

\begin{proof}
Let $D_{\rho}(p,q)=\{u\in W^{1,p}_0(\Omega)\, : \, \int_{\Omega}|u|^qdx=\rho\}$, and $\Sigma_k(p,q)=\{A \subset D_\rho(p,q), A \in \Sigma \hbox{ and} \ \gamma(A) \geq k\},$ where $\Sigma=\{A\subset W^{1,p}_0(\Omega): \, A\,\text{closed}, A=-A\}.$ Set 
\begin{equation}\label{ck}
c_k(p,q;\rho)=\inf_{A\in\Sigma_k(p,q)}\sup_{u\in A}\left(\frac{1}{p}\int_{\Omega}|\nabla u|^pdx+\frac 1q\int_{\Omega}|\nabla u|^qdx\right)>0
\end{equation}
Let us show that $I(u)=\displaystyle{\frac{1}{p}\int_{\Omega}|\nabla u|^pdx+\frac 1q\int_{\Omega}|\nabla u|^qdx}$ satisfies the Palais-Smale (PS) condition on $D_{\rho}(p,q)$. Let $\{u_n\}\subset D_{\rho}(p,q)$ a (PS) sequence, i.e, for all $n,$ $K>0$ $|I(u_n)|\leq K$ and $(I\big|_{D_\rho})'(u_n)\rightarrow 0$ in $W^{-1,p'}(\Omega)$ as $n \rightarrow \infty.$ We first show that $\{u_n\} \subset D_{\rho}(p,q)$ is bounded in $W^{1,p}_0(\Omega).$ Since $u_n\in W^{1,q}_0(\Omega),$ with the Poincar\'e inequality, we have $\int_{\Omega}|u_n|^qdx\leq K\int_{\Omega}|\nabla u_n|^qdx $ and it follows that 
$$
K\geq |I(u_n)|\geq \frac{q}{p}\int_{\Omega}|\nabla u_n|^pdx+\frac{1}{C}\int_{\Omega}|u_n|^qdx=\frac{q}{p}\|u_n\|^p_{1,p}+\frac{\rho}{C}.
$$ 
Then $\{u_n\} \subset D_{\rho}(p,q)$ is bounded in $ W^{1,p}_0(\Omega).$ We can assume that up to a subsequence, still denoted $\{u_n\}$, there exists $u\in W^{1,p}_0(\Omega)$ such that $u_n \rightharpoonup u$ in $W^{1,p}_0(\Omega)$ and $u_n \to u$ in $L^q$. Now, we show that $u_n$ converges strongly to $u$ in $W^{1,p}_0(\Omega).$ Since 
$(I\big|_{D_\rho})'(u_n)\rightarrow 0$ in $W^{-1,p'}(\Omega)$ as $n\rightarrow \infty,$ there exists $\mu_n \in \mathbb R$ and $\varepsilon_n \to 0$ in $W^{-1,p'}_0(\Omega)$ such that $I'(u_n)v - \mu_n \int_\Omega |u_n|^{q-2}u_n v = \langle \varepsilon_n,v\rangle$. We have $I'(u_n)u_n - \mu_n\int_\Omega |u_n|^q \to 0$, and since $I'(u_n)u_n \le c I(u_n)\le c$  it follows that $|\mu_n| \le c$. From this we obtain that $I'(u_n)(u_n-u)\rightarrow 0$ and $I'(u)(u_n-u)\rightarrow 0$ as $n\rightarrow\infty.$ Therefore, 
\begin{align*}
  o(1) =   \langle I'(u_n)-I'(u), u_n-u\rangle&=\int_{\Omega}(|\nabla u_n|^{p-2}\nabla u_n-|\nabla u|^{p-2}\nabla u)\cdot \nabla (u_n-u)dx\\ &+\underbrace{\int_{\Omega}(|\nabla u_n|^{q-2}\nabla u_n-|\nabla u|^{q-2}\nabla u)\cdot \nabla (u_n-u)dx}_{:=Q}.
\end{align*}
Using Lemma \ref{inevec} below and the fact that the underbraced quantity $Q$ is positive (see Remark \ref{r1}), it follows that 
$$\langle I'(u_n)-I'(u), u_n-u\rangle\geq c_2\|u_n-u\|^p_{1,p}.$$ This shows that $u_n$ converges strongly to $u$ in $W^{1,p}_0(\Omega)$ as $n\rightarrow\infty $ since $\langle I'(u_n)-I'(u), u_n-u\rangle\rightarrow 0$ as $n\rightarrow\infty.$\\
\\
In order to end the proof, let us show that if $c=c_k(p,q)=\dots=c_{k+m-1}(p,q),$ then the set $K_c$ of critical points of $I$ at the critical level $c$ has a genus $\gamma(K_c)\geq m.$
We consider the level set at $c$, $$K_c:=\{u\in D_{\rho}(p,q):~I(u)=c~,~I'(u)=0\}.
$$ 
We have that $K_c$ is compact since the functional $I$ satisfies the Palais-Smale condition and $0\notin K_c$ since $c>0=I(0)$. In addition, we have $I(u)=I(-u)$. Hence $K_c\in \Sigma$. Assume by contradiction that $\gamma(K_c) \le  m-1$. Take $A_\varepsilon \in \Sigma_{k+m-1}$ such that $\sup_{A_\varepsilon} I(u) \le c+\varepsilon$. By the properties of the genus, there exists a $\delta$-neighborhood $N_\delta$ of $K_c$ such that $\gamma(N_\delta) = \gamma(K_c)$, and
$\gamma(A_\varepsilon \setminus N_\delta) \ge \gamma(A_\varepsilon) - \gamma(N_\delta) \ge k+m-1 - (m-1) = k$. By the deformation theorem there exists a homeomorphism $\eta(1,\cdot)$ such that $I(u) \le c-\varepsilon$, for $u \in \eta(1,A_\varepsilon \setminus N_\delta)$. Then we arrive at the contradiction
$$
c = \inf_{A\in \Sigma_k}\sup_{u \in A} I(u)\le \sup_{\eta(1,A_\varepsilon \setminus N_\delta)}I(u) \le c-\varepsilon
$$
Hence, $\gamma(K_c) \ge m$.
\par
With a compactness argument one shows that $c_k(p,q;\rho) \to \infty$ as $k \to \infty$. 
\par \smallskip
For the corresponding eigenvalues $\lambda^D_k(p,q,\rho)$ we then have
$$ 
\int_\Omega |\nabla u_k|^p dx + \int_\Omega |\nabla u_k|^q dx
= \lambda^D_k(p,q;\rho) \int_\Omega |u_k|^qdx = \lambda^D_k(p,q;\rho) \, \rho
$$
Thus $\lambda^D_k(p,q;\rho) \, \rho > c_k(p,q;\rho)$, for all $k$ (and fixed $\rho$), and hence also $\lambda^D_k(p,q;\rho) \to \infty$ as $k \to \infty$.
\end{proof}
For $p<q$ one has the analogous result:
\begin{theorem}\label{sequence2}
Let $p<q$ be given.Then, for a given $\rho>0,$ there exists a nondecreasing sequence of critical values $c_k(p,q;\rho)$ with associated nonlinear eigenvalues $\lambda^D_k(p,q;\rho)\to +\infty,$ as $k\to +\infty$ and with corresponding eigenfunctions $u_k(p,q;\rho)\in W^{1,q}_0(\Omega)$ for problem (\ref{e1}).
\end{theorem}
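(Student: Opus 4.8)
The plan is to mirror the proof of Theorem \ref{sequence1} almost verbatim, since the two statements are symmetric under the roles of $p$ and $q$. The essential difference is that now $q$ is the larger exponent, so the natural energy space in which to work is $W^{1,q}_0(\Omega)$ rather than $W^{1,p}_0(\Omega)$. I would therefore set
$$
D_\rho(p,q)=\{u\in W^{1,q}_0(\Omega)\,:\,\int_\Omega|u|^q\,dx=\rho\},\qquad
\Sigma_k(p,q)=\{A\subset D_\rho(p,q):A\in\Sigma,\ \gamma(A)\ge k\},
$$
with $\Sigma$ now the class of closed symmetric subsets of $W^{1,q}_0(\Omega)$, and define the critical values $c_k(p,q;\rho)$ by the same minimax formula \eqref{ck}. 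The functional $I(u)=\frac1p\int_\Omega|\nabla u|^p\,dx+\frac1q\int_\Omega|\nabla u|^q\,dx$ is the same, and the abstract minimax machinery from \cite[Proposition 10.8]{AM} together with the Krasnoselski genus applies identically.

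The key steps, in order, are: first, establish that $I$ satisfies the Palais--Smale condition on $D_\rho(p,q)$. For boundedness of a (PS) sequence I would now exploit the $q$-term directly: from the constraint $\int_\Omega|u_n|^q\,dx=\rho$ and the Poincar\'e inequality the term $\frac1q\int_\Omega|\nabla u_n|^q\,dx$ controls $\|u_n\|_{1,q}^q$, so the energy bound $|I(u_n)|\le K$ yields at once that $\{u_n\}$ is bounded in $W^{1,q}_0(\Omega)$ (here the argument is in fact cleaner than in Theorem \ref{sequence1}, since now it is the dominant $q$-term that sits in the energy space). Passing to a weakly convergent subsequence $u_n\rightharpoonup u$ in $W^{1,q}_0(\Omega)$ and $u_n\to u$ in $L^q$, I extract as before a Lagrange multiplier $\mu_n$, show $|\mu_n|\le c$, and deduce $\langle I'(u_n)-I'(u),u_n-u\rangle\to 0$. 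Invoking Lemma \ref{inevec} and the positivity of the $p$-term (the analogue of the underbraced $Q$), this now gives $\langle I'(u_n)-I'(u),u_n-u\rangle\ge c_2\|u_n-u\|_{1,q}^q$, whence strong convergence in $W^{1,q}_0(\Omega)$.

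Second, with (PS) in hand, the multiplicity argument is literally the same: if $c=c_k(p,q)=\dots=c_{k+m-1}(p,q)$ then $K_c$ is compact, symmetric, and avoids $0$, so by the genus subadditivity/deformation argument $\gamma(K_c)\ge m$, producing the critical values with the stated monotonicity. A compactness argument gives $c_k(p,q;\rho)\to\infty$, and from the Euler--Lagrange identity
$$
\int_\Omega|\nabla u_k|^p\,dx+\int_\Omega|\nabla u_k|^q\,dx=\lambda^D_k(p,q;\rho)\,\rho
$$
one reads off $\lambda^D_k(p,q;\rho)\,\rho>c_k(p,q;\rho)$ and hence $\lambda^D_k(p,q;\rho)\to\infty$.

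I expect no genuinely new obstacle: the only point requiring care is to verify that the coercivity/boundedness estimate is driven by the correct (now $q$-) term and that the vector inequality of Lemma \ref{inevec}, applied to the $q$-Laplacian difference, delivers the lower bound $c_2\|u_n-u\|_{1,q}^q$ with the $p$-term treated as the nonnegative remainder. Since $1<p<q$, one should note that $u\in W^{1,q}_0(\Omega)\hookrightarrow W^{1,p}_0(\Omega)$ on the bounded domain $\Omega$, so the $p$-term in $I$ is finite and the functional is well defined and $C^1$ on $D_\rho(p,q)$; this embedding is what makes $W^{1,q}_0(\Omega)$ the right space and keeps every integral in the argument meaningful.
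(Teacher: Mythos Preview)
Your proposal is correct and follows exactly the paper's approach: the paper's own proof of Theorem \ref{sequence2} simply sets up $D_\rho(p,q)\subset W^{1,q}_0(\Omega)$, defines the minimax values (denoted $b_k$ there), and then states that one shows the (PS) condition and the genus multiplicity ``similar to the proof of Theorem \ref{sequence1}''. Your write-up in fact supplies more detail than the paper does, correctly identifying that the $q$-term now furnishes both the boundedness of (PS) sequences and, via Lemma \ref{inevec}, the lower bound yielding strong convergence in $W^{1,q}_0(\Omega)$, with the $p$-term playing the role of the nonnegative remainder.
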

\begin{proof}
Let $D_{\rho}(p,q)=\{u\in W^{1,q}_0(\Omega)~:~\int_{\Omega}|u|^qdx=\rho\}$, and $\Sigma_k(p,q)=\{A\subset\Sigma~:~\gamma(A\cap D_{\rho}(p,q))\geq k\},$ where $\Sigma=\{A\subset W^{1,q}_0(\Omega):~~A~~\text{closed},~~ A=-A\}.$ Set 
$$
b_k(p,q)=\inf_{A\in\Sigma_k(p,q)}\sup_{u\in A}\left(\frac{1}{p}\int_{\Omega}|\nabla u|^pdx+\frac 1q\int_{\Omega}|\nabla u|^qdx\right)>0.
$$ 
Similar to the proof of Theorem \ref{sequence1}, one shows that:
\begin{enumerate}
    \item the functional $I(u)=\displaystyle{\frac{1}{p}\int_{\Omega}|\nabla u|^pdx+ \frac 1q \int_{\Omega}|\nabla u|^qdx}$ satisfies the (PS) condition on $D_{\rho}(p,q)$, and
    \item if $b=b_k(p,q)=\dots=b_{k+m-1}(p,q),$ then the set $K_b$ of critical points of $I$ at the critical level $b$ has a genus $\gamma(K_b)\geq m.$
\end{enumerate}
\end{proof}
We note that the results of Theorem \ref{sequence1}-\ref{sequence2} are illustrated in 
figure \ref{fig1} in section \ref{mult}.
\section{Auxiliary results}
\begin{remark}\label{r1}
\textup{Let $p>q$. We recall that the nonlinear operator $\Theta: W^{1,p}_0(\Omega)\rightarrow W^{-1,q'}(\Omega)\subset W^{-1,p'}(\Omega)$ defined by $$\langle\Theta u, v\rangle= \int_{\Omega}|\nabla u|^{p-2}\nabla u\cdot\nabla v~dx+\int_{\Omega}|\nabla u|^{q-2}\nabla u\cdot\nabla v~dx$$is continuous and so it is demi-continuous. The operator $\Theta $ is said to be demi-continuous if $\Theta$ satisfies that whenever $u_n\in W^{1,p}_0(\Omega)$ converges to some $u\in W^{1,p}_0(\Omega)$ then $\Theta u_n\rightharpoonup\Theta u$ as $n\rightarrow\infty.$
\\
In addition, we claim that the operator $\Theta$ satisfies the following condition: for any $u_n\in W^{1,p}_0(\Omega)$ satisfying $u_n\rightharpoonup u$ in $W^{1,p}_0(\Omega)$ and $\limsup\limits_{n\rightarrow\infty}\langle\Theta u_n, u_n-u\rangle\leq 0,$ then $u_n\rightarrow u$ in $W^{1,p}_0(\Omega)$ as $n\rightarrow\infty.$ The same result hold in the case where $p<q.$}
\end{remark}
\indent Indeed, assume that $u_n\rightharpoonup u$ in $W^{1,p}_0(\Omega)$ and $\limsup\limits_{n\rightarrow\infty}\langle\Theta u_n, u_n-u\rangle\leq 0.$ Hence $u_n$ converges strongly to $u$ in $L^p(\Omega)$ and one has
\begin{equation*}
\begin{split}
  0&\geq \limsup\limits_{n\rightarrow\infty}\langle\Theta u_n-\Theta u, u_n-u\rangle\\ &=  
  \limsup\limits_{n\rightarrow\infty}\int_{\Omega}\left[|\nabla u_n|^{p-2}\nabla u_n-|\nabla u|^{p-2}\nabla u+|\nabla u_n|^{q-2}\nabla u_n-|\nabla u|^{q-2}\nabla u)\right]\cdot\nabla(u_n-u)dx.
\end{split}
\end{equation*}
On the other hand, for any $\nabla u_n, \nabla u\in (L^p(\Omega))^N$, one has,
$$\begin{array}{ll}
 \dis\int_{\Omega}\hspace{-0,2cm}&\dis(|\nabla u_n|^{p-2}\nabla u_n -|\nabla u|^{p-2}\nabla u)\cdot\nabla(u_n-u)dx \dis=\int_{\Omega}(|\nabla u_n|^p+|\nabla u|^p-|\nabla u_n|^{p-2}\nabla u_n\cdot\nabla u
 \vspace{0.2cm}\\ 
 & \dis-|\nabla u|^{p-2}\nabla u\cdot\nabla u_n)dx 
 \vspace{0.2cm}\\
 &\dis\geq \int_{\Omega}(|\nabla u_n|^p+|\nabla u|^p)dx-\Big(\int_{\Omega}|\nabla u_n|^pdx\Big)^{1/p'}\times \Big(\int_{\Omega}|\nabla u|^p dx\Big)^{1/p}- \vspace{0.2cm}\\
 &\dis-\Big(\int_{\Omega}|\nabla u_n|^pdx\Big)^{1/p}
\times \Big(\int_{\Omega}|\nabla u|^pdx\Big)^{1/p'}
 \vspace{0.2cm}\\ 
 &\dis=\Big[\Big(\int_{\Omega}|\nabla u_n|^pdx\Big)^{\frac{p-1}{p}}-\Big(\int_{\Omega}|\nabla u|^pdx\Big)^{\frac{p-1}{p}}\Big]\times \Big[\Big(\int_{\Omega}|\nabla u_n|^pdx\Big)^{\frac{1}{p}}-\Big(\int_{\Omega}|\nabla u|^pdx\Big)^{\frac{1}{p}}\Big]
 \vspace{0.2cm}\\ 
 &\dis=\big(\|u_n\|^{p-1}_{1,p}-\|u\|^{p-1}_{1,p}\big)\big(\|u_n\|^{p}_{1,p}-\|u\|^{p}_{1,p}\big)\geq 0.
    \end{array}
$$
We then deduce from this inequality that $\int_{\Omega}|\nabla u_n|^pdx\rightarrow \int_{\Omega}|\nabla u|^pdx$ as $n\rightarrow\infty$ and similarly $\int_{\Omega}|\nabla u_n|^qdx\rightarrow \int_{\Omega}|\nabla u|^qdx$ as $n\rightarrow\infty.$ Consequently $u_n$ converges strongly to $u$ in $W^{1,p}_0(\Omega)\subset W^{1,q}_0(\Omega).$ 
\begin{proposition}
Assume that $p>q.$ If $(\lambda,0)$ is a bifurcation point of solutions of problem (\ref{e1}) then $\lambda$ is an eigenvalue of problem (\ref{dirichletlap}).
\end{proposition}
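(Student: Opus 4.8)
The plan is to exploit the fact that near the trivial solution the $q$-Laplacian term and the $q$-homogeneous right-hand side dominate the $p$-Laplacian term (because $p>q$), so that after a suitable rescaling the limiting equation is exactly \eqref{dirichletlap}. First I would unravel the definition of bifurcation: since $(\lambda,0)$ is a bifurcation point there is a sequence of nontrivial weak solutions $(\lambda_n,u_n)$ of \eqref{e1} with $\lambda_n\to\lambda$ and $u_n\to 0$ in $W^{1,p}_0(\Omega)$. Testing \eqref{e2} with $v=u_n$ gives the energy identity
$$\|u_n\|_{1,p}^p+\|u_n\|_{1,q}^q=\lambda_n\int_\Omega|u_n|^q\,dx.$$
Combining this with the Poincaré inequality $\lambda^D_1(q)\int_\Omega|u_n|^q\,dx\le\|u_n\|_{1,q}^q$ yields two a priori facts I will use repeatedly: the quantities $\|u_n\|_{1,q}^q$ and $\int_\Omega|u_n|^q\,dx$ are comparable, and $\|u_n\|_{1,p}^p\le C\|u_n\|_{1,q}^q$ for a constant depending only on $\lambda$ and $\Omega$; in particular $\|u_n\|_{1,p}\le C'\|u_n\|_{1,q}^{q/p}$.

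Next I would rescale by the $W^{1,q}_0$-norm. Set $t_n:=\|u_n\|_{1,q}\to 0$ and $w_n:=u_n/t_n$, so that $\|w_n\|_{1,q}=1$ and $\{w_n\}$ is bounded in $W^{1,q}_0(\Omega)$. Dividing the weak formulation by $t_n^{q-1}$ gives, for every $v\in W^{1,p}_0(\Omega)=W^{1,p}_0(\Omega)\cap W^{1,q}_0(\Omega)$,
$$t_n^{p-q}\int_\Omega|\nabla w_n|^{p-2}\nabla w_n\cdot\nabla v\,dx+\int_\Omega|\nabla w_n|^{q-2}\nabla w_n\cdot\nabla v\,dx=\lambda_n\int_\Omega|w_n|^{q-2}w_n\,v\,dx.$$
The crucial computation is that the first term is negligible: by Hölder its modulus is at most $t_n^{p-q}\|w_n\|_{1,p}^{p-1}\|v\|_{1,p}$, and the a priori estimate gives $t_n^{p-q}\|w_n\|_{1,p}^{p-1}\le C\,\|u_n\|_{1,q}^{(p-q)/p}\to 0$, the exponent $(p-q)/p$ being strictly positive precisely because $p>q$. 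Thus for each fixed $v$ the $p$-contribution disappears in the limit.

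Finally I would pass to the limit. Up to a subsequence $w_n\rightharpoonup w$ in $W^{1,q}_0(\Omega)$ and, by Rellich--Kondrachov, $w_n\to w$ in $L^q(\Omega)$; hence $|w_n|^{q-2}w_n\to|w|^{q-2}w$ in $L^{q'}(\Omega)$ and the right-hand side converges to $\lambda\int_\Omega|w|^{q-2}w\,v\,dx$. To handle the $q$-Laplacian term I would invoke the $(S_+)$-type property established in Remark \ref{r1}, applied now to $-\Delta_q$ alone (the vector-inequality computation there carries over verbatim with $p$ replaced by $q$ and no $p$-term): using the limiting identity together with the rescaled energy identity one computes
$$\limsup_{n\to\infty}\int_\Omega|\nabla w_n|^{q-2}\nabla w_n\cdot\nabla(w_n-w)\,dx\le 0,$$
which forces $w_n\to w$ strongly in $W^{1,q}_0(\Omega)$. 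Strong convergence yields both $\|w\|_{1,q}=1$, so $w\neq 0$, and the identification of the weak limit of $|\nabla w_n|^{q-2}\nabla w_n$ as $|\nabla w|^{q-2}\nabla w$. Passing to the limit and extending the test functions from $W^{1,p}_0(\Omega)$ to $W^{1,q}_0(\Omega)$ by density, I conclude that $w$ is a nontrivial weak solution of \eqref{dirichletlap} with parameter $\lambda$, so that $\lambda$ is an eigenvalue of \eqref{dirichletlap}.

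I expect the main obstacle to be the tension between killing the $p$-Laplacian term and keeping the rescaled limit nontrivial: a naive normalization in $W^{1,p}_0(\Omega)$ lets the $L^q$-mass escape, so the limit could vanish, while a normalization in $L^q(\Omega)$ risks $\|w_n\|_{1,p}$ blowing up faster than $t_n^{p-q}$ decays. Normalizing in $W^{1,q}_0(\Omega)$ together with the sharp a priori bound $\|u_n\|_{1,p}\le C'\|u_n\|_{1,q}^{q/p}$ is what reconciles the two, and verifying that the resulting decay exponent $(p-q)/p$ is positive is the heart of the argument.
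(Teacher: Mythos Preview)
Your argument is correct and follows essentially the same route as the paper: rescale the solutions so that the $p$-Laplacian contribution becomes lower order, invoke the $(S_+)$-type property of Remark~\ref{r1} to upgrade weak to strong convergence, and pass to the limit in the weak formulation. The only difference is the normalization---the paper divides by $\|u_n\|_{1,p}$ (so $\|w_n\|_{1,p}=1$ and the $p$-term vanishes trivially as $\|u_n\|_{1,p}^{p-q}\to 0$, with no a~priori estimate needed), whereas you divide by $\|u_n\|_{1,q}$, which costs you the energy bound $\|u_n\|_{1,p}^p\le C\|u_n\|_{1,q}^q$ to kill the $p$-term but in return makes the nontriviality $\|w\|_{1,q}=1$ of the limit explicit, a point the paper's own proof leaves unstated.
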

\begin{proof}
 Since $(\lambda,0)$ is a bifurcation point from zero of solutions of problem (\ref{e1}), there is a sequence of nontrivial solutions of problem (\ref{e1}) such that $\lambda_n\rightarrow\lambda$ and $\|u_n\|_{1,p}\rightarrow 0$ in $W^{1,p}_0(\Omega).$ We then have 
\begin{equation}\label{p1}
   \int_{\Omega}|\nabla u_n|^{p-2}\nabla u_n\cdot\nabla v~dx+\int_{\Omega}|\nabla u_n|^{q-2}\nabla u_n\cdot\nabla v~dx=\lambda_n\int_{\Omega}|u_n|^{q-2}u_nv~dx,
\end{equation}

Let $w_n=u_n/\|u_n\|_{1,p}.$ Plugging this change of variable into equation (\ref{p1}), we get
\begin{equation}\label{p2}
     \|u_n\|^{p-q}_{1,p}\int_{\Omega}|\nabla w_n|^{p-2}\nabla w_n\cdot\nabla v~dx+\int_{\Omega}|\nabla w_n|^{q-2}\nabla w_n\cdot\nabla v~dx=\lambda_n\int_{\Omega}|w_n|^{q-2}w_nv~dx
\end{equation}
With Remark \ref{r1}, it follows that $$ \|u_n\|^{p-q}_{1,p}\int_{\Omega}|\nabla w_n|^{p-2}\nabla w_n\cdot\nabla v~dx+\int_{\Omega}|\nabla w_n|^{q-2}\nabla w_n\cdot\nabla v~dx\rightarrow \int_{\Omega}|\nabla w|^{q-2}\nabla w\cdot\nabla v~dx$$ as $n\rightarrow\infty$ since $\|u_n\|_{1,p}\rightarrow 0$ by assumption and $\lambda_n\int_{\Omega}|u_n|^{q-2}u_nv~dx$ converges to $\lambda\int_{\Omega}|u|^{q-2}uv~dx$ as $n\rightarrow\infty$. Thus, we obtain that $$\int_{\Omega}|\nabla w|^{q-2}\nabla w\cdot\nabla v~dx=\lambda\int_{\Omega}|u|^{q-2}uv~dx$$ for all $v\in W^{1,p}_0(\Omega).$ 
\end{proof}
The following lemma will be used in some occasions.
\begin{lemma}[\cite{PL}]\label{inevec}
There exist constants $c_1,c_2$ such that for all $x_1,x_2\in\mathbb{R}^N,$ we have the following vector inequalities for $1<s<2$
 \begin{equation*}
 (|x_2|^{s-2}x_2-|x_1|^{s-2}x_1)\cdot(x_2-x_1)\geq c_1(|x_2|+|x_1|)^{s-2}|x_2-x_1|^2,
 \end{equation*}
 and for $s>2$
 \begin{equation*}
 (|x_2|^{s-2}x_2-|x_1|^{s-2}x_1)\cdot(x_2-x_1)\geq c_2|x_2-x_1|^s.
 \end{equation*}
\end{lemma}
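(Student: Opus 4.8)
The plan is to recognize the left-hand side as the monotonicity expression for the gradient of the convex function $\Phi(x)=\frac1s|x|^s$. Writing $F(x)=|x|^{s-2}x=\nabla\Phi(x)$, setting $h=x_2-x_1$, and parametrizing the segment by $\xi(t)=x_1+t\,h$ for $t\in[0,1]$, I would apply the fundamental theorem of calculus to $t\mapsto F(\xi(t))$. Since the Jacobian of $F$ at a point $x\neq0$ is $DF(x)=|x|^{s-2}I+(s-2)|x|^{s-4}\,x\otimes x$, this yields the single identity
\[
(|x_2|^{s-2}x_2-|x_1|^{s-2}x_1)\cdot(x_2-x_1)=\int_0^1\Big[|\xi(t)|^{s-2}|h|^2+(s-2)|\xi(t)|^{s-4}(\xi(t)\cdot h)^2\Big]\,dt,
\]
which handles both regimes at once; the two cases differ only in how the sign-indefinite second term is treated.

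For $1<s<2$ the factor $(s-2)$ is negative, so I would bound the second term from below by Cauchy--Schwarz, $(\xi(t)\cdot h)^2\le|\xi(t)|^2|h|^2$, which (the prefactor being negative) reverses the inequality and gives integrand $\ge(s-1)|\xi(t)|^{s-2}|h|^2$. Because $s-2<0$ and $|\xi(t)|\le(1-t)|x_1|+t|x_2|\le|x_1|+|x_2|$, one has the pointwise bound $|\xi(t)|^{s-2}\ge(|x_1|+|x_2|)^{s-2}$, and integrating over $[0,1]$ produces the first inequality with $c_1=s-1$.

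For $s>2$ both terms of the integrand are nonnegative, so I would discard the second and reduce matters to the lower bound $\int_0^1|\xi(t)|^{s-2}\,dt\ge c\,|h|^{s-2}$. The geometric key is that, with $t_*$ the parameter of the orthogonal projection of the origin onto the line through $x_1,x_2$, one has $|\xi(t)|^2=d_*^2+|h|^2(t-t_*)^2\ge|h|^2(t-t_*)^2$, hence $|\xi(t)|^{s-2}\ge|h|^{s-2}|t-t_*|^{s-2}$. A one-variable minimization of $t_*\mapsto\int_0^1|t-t_*|^{s-2}\,dt$, whose minimum over $t_*\in\mathbb{R}$ is attained at $t_*=\tfrac12$, then gives the second inequality with, for instance, $c_2=2^{2-s}/(s-1)$.

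I expect the main obstacle to be the $s>2$ case: unlike the subquadratic case the weight $|\xi|^{s-2}$ can vanish along the segment, so a pointwise estimate of the weight is useless and one genuinely needs the ``distance grows linearly away from the foot of the perpendicular'' bound together with the explicit minimization above. A secondary technical point, present in both cases, is that $F$ is not $C^1$ at the origin when $s<2$: if the segment passes through $0$ the integrand has an integrable singularity at a single value of $t$, so I would justify the integral identity via absolute continuity of $t\mapsto F(\xi(t))$, or else prove the inequalities for $x_1,x_2$ off the singular line and recover the general case by continuity.
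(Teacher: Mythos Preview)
Your proof is correct. Note, however, that the paper does not supply its own proof of this lemma: it is quoted from Lindqvist's notes \cite{PL} and used as a black box, so there is no argument in the paper to compare against. Your integral-representation approach via $DF(x)=|x|^{s-2}I+(s-2)|x|^{s-4}x\otimes x$ is in fact the standard route to these inequalities (and is essentially the one in \cite{PL}); the explicit constants $c_1=s-1$ and $c_2=2^{2-s}/(s-1)$ you obtain are correct, and your handling of the two delicate points---the integrable singularity on the segment when $1<s<2$, and the fact that for $s>2$ one must integrate rather than bound $|\xi(t)|^{s-2}$ pointwise---is appropriate.
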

\par \medskip

\section{First eigenvalues}

In this section we prove that every $\lambda > \lambda_1^D(q)$ is a first eigenvalue of problem \eqref{e1}.
\par \bigskip

We define the  energy functional $E_{\lambda}: W^{1,p}_0(\Omega)\cap  W^{1,q}_0(\Omega)\rightarrow \mathbb{R}$ associated to relation (\ref{e2}) by
\begin{equation}\label{functional}
    E_{\lambda}(u)=\frac{1}{p}\int_{\Omega}|\nabla u|^pdx+\frac{1}{q}\int_{\Omega}|\nabla u|^qdx-\frac{\lambda}{q}\int_{\Omega}|u|^qdx.
\end{equation}

\begin{lemma}\label{coercivite}
Suppose that $p>q.$ Then for each $\lambda>0,$ the functional $E_{\lambda}$ defined in (\ref{functional}) is coercive.
\end{lemma}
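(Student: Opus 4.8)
The plan is to show that $E_{\lambda}(u)\to+\infty$ as $\|u\|_{1,p}\to\infty$. Note first that since $p>q$ and $\Omega$ is bounded we have $W^{1,p}_0(\Omega)\cap W^{1,q}_0(\Omega)=W^{1,p}_0(\Omega)$, so coercivity is to be understood with respect to the $W^{1,p}_0$-norm. The decisive observation is that the leading positive term $\frac1p\int_{\Omega}|\nabla u|^p\,dx=\frac1p\|u\|_{1,p}^p$ grows with exponent $p$, whereas the only negative term $-\frac{\lambda}{q}\int_{\Omega}|u|^q\,dx$ grows at most with exponent $q<p$; since $p>q$, the former eventually dominates.

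First I would discard the nonnegative middle term $\frac1q\int_{\Omega}|\nabla u|^q\,dx\ge 0$, obtaining
$$
E_{\lambda}(u)\ge \frac1p\|u\|_{1,p}^p-\frac{\lambda}{q}\int_{\Omega}|u|^q\,dx.
$$
Next, since $q<p$ and $\Omega$ is bounded, the continuous Sobolev embedding $W^{1,p}_0(\Omega)\hookrightarrow L^q(\Omega)$ (equivalently, Hölder's inequality on $\Omega$ combined with the Poincaré inequality in $W^{1,p}_0$) yields a constant $C=C(p,q,\Omega)>0$ with $\|u\|_q\le C\|u\|_{1,p}$, hence $\int_{\Omega}|u|^q\,dx\le C^q\|u\|_{1,p}^q$. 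Substituting,
$$
E_{\lambda}(u)\ge \frac1p\|u\|_{1,p}^p-\frac{\lambda C^q}{q}\|u\|_{1,p}^q=\|u\|_{1,p}^q\Big(\frac1p\|u\|_{1,p}^{p-q}-\frac{\lambda C^q}{q}\Big).
$$

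Finally, writing $t=\|u\|_{1,p}$, the right-hand side equals $t^q\big(\frac1p t^{p-q}-\frac{\lambda C^q}{q}\big)$; since $p-q>0$ the bracketed factor tends to $+\infty$ as $t\to\infty$, and therefore $E_{\lambda}(u)\to+\infty$. This establishes coercivity for every $\lambda>0$. I expect no serious obstacle: the whole argument hinges only on the strict inequality $p>q$ (which lets the $p$-growth beat the $q$-growth) together with the boundedness of $\Omega$ (which supplies the embedding constant $C$), while the sign and size of $\lambda$ play no role beyond fixing the constant in front of the lower-order term. As an alternative to the embedding one could use the eigenvalue estimate $\lambda^D_1(q)\int_{\Omega}|u|^q\,dx\le\int_{\Omega}|\nabla u|^q\,dx$ to absorb the negative term into the retained $q$-gradient term, but the direct bound above is cleaner.
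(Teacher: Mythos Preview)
Your proof is correct and follows essentially the same approach as the paper: both drop the nonnegative $q$-gradient term, bound $\int_{\Omega}|u|^q\,dx$ by $C\|u\|_{1,p}^q$ via the embedding $W^{1,p}_0(\Omega)\hookrightarrow L^q(\Omega)$ (the paper phrases this as H\"older plus Poincar\'e), and conclude $E_{\lambda}(u)\ge \frac{1}{p}\|u\|_{1,p}^p-\tilde{C}\|u\|_{1,p}^q\to+\infty$.
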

\begin{proof}
If $p>q,$ We have that $W^{1,p}_0(\Omega)\subset W^{1,q}_0(\Omega)$ and the following inequalities hold true
\begin{enumerate}
    \item $ \frac{1}{p}\displaystyle{\int_{\Omega}}|\nabla u|^pdx+\frac{1}{q}\displaystyle{\int_{\Omega}}|\nabla u|^qdx\geq \frac{1}{p}\displaystyle{\int_{\Omega}}|\nabla u|^pdx,$
    \item $\displaystyle{\int_{\Omega}}|\nabla u|^qdx\leq C\|u\|^q_{1,p}$ (using the H\"older inequality).
\end{enumerate}
With items (i) and (ii) we obtain $E_{\lambda}(u)\geq \frac{1}{p}\|u\|^p_{1,p}-\tilde{C}\|u\|^q_{1,p}$ and consequently $E_{\lambda}(u)\rightarrow +\infty$ as $\|u\|_{1,p}\rightarrow +\infty.$
\end{proof}
\begin{remark}
We notice that $E_{\lambda}$ is not bounded below if $p<q$ and $\lambda>\lambda^D_1(q)$ since for every $u=u_1,$ the first eigenfunction of (\ref{dirichletlap}) with $\int_{\Omega}|u_1|^qdx=1,$ we have $$E_{\lambda}(tu)=\frac{t^p}{p}\|u_1\|^p_{1,p}+\frac{t^q}{q}(\lambda^D_1(q)-\lambda)\rightarrow -\infty$$ as $t\rightarrow\infty.$ 
\end{remark}
\begin{theorem}
Let $p>q$. Then every $\lambda\in  (\lambda^D_1(q),\infty)$ is a first eigenvalue of problem (\ref{e1}).
\end{theorem}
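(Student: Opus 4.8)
The plan is to obtain the eigenfunction for the given $\lambda$ by \emph{unconstrained} minimization of the energy functional $E_\lambda$ defined in \eqref{functional} over $W^{1,p}_0(\Omega)$ (recall $W^{1,p}_0(\Omega)\subset W^{1,q}_0(\Omega)$ since $p>q$), and then to identify the resulting critical value with a first eigenvalue for a suitable mass $\rho$. First I would show that $E_\lambda$ attains a global minimum. By Lemma \ref{coercivite}, $E_\lambda$ is coercive on $W^{1,p}_0(\Omega)$, so any minimizing sequence is bounded; by reflexivity it has a weakly convergent subsequence $u_n\rightharpoonup u_\lambda$. The two gradient terms $\frac1p\int_\Omega|\nabla u|^p$ and $\frac1q\int_\Omega|\nabla u|^q$ are convex and continuous, hence weakly lower semicontinuous, while the compact embedding $W^{1,p}_0(\Omega)\hookrightarrow L^q(\Omega)$ gives $\int_\Omega|u_n|^q\to\int_\Omega|u_\lambda|^q$. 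Therefore $E_\lambda(u_\lambda)\le\liminf_n E_\lambda(u_n)=\inf E_\lambda$, so $u_\lambda$ is a global minimizer.

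The crucial point is that this minimizer is nontrivial, which I would establish by showing $\inf E_\lambda<0=E_\lambda(0)$. Let $u_1$ be the first eigenfunction of \eqref{dirichletlap} normalized by $\int_\Omega|u_1|^q=1$, so that $\int_\Omega|\nabla u_1|^q=\lambda^D_1(q)$. As in the Remark preceding \eqref{functional}, one computes
\begin{equation*}
E_\lambda(tu_1)=\frac{t^p}{p}\|u_1\|_{1,p}^p+\frac{t^q}{q}\big(\lambda^D_1(q)-\lambda\big).
\end{equation*}
Since $\lambda>\lambda^D_1(q)$ the bracket is negative, and because $p>q$ the term $t^p$ is negligible compared with $t^q$ as $t\to0^+$; hence $E_\lambda(tu_1)<0$ for small $t>0$ and $\inf E_\lambda<0$. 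Consequently $u_\lambda\neq0$.

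Being an unconstrained minimizer, $u_\lambda$ solves the Euler--Lagrange equation $E_\lambda'(u_\lambda)=0$, which is precisely \eqref{e2} for the given $\lambda$; thus $\lambda$ is an eigenvalue of \eqref{e1} with eigenfunction $u_\lambda$. To conclude that it is a \emph{first} eigenvalue, set $\rho:=\int_\Omega|u_\lambda|^q>0$. On the slice $\{u:\int_\Omega|u|^q=\rho\}$ the functional $E_\lambda$ differs from the $c_1$-functional in \eqref{e3} only by the additive constant $-\frac{\lambda}{q}\rho$; since $u_\lambda$ lies on this slice and is a global minimizer of $E_\lambda$, it also minimizes $\frac1p\int_\Omega|\nabla u|^p+\frac1q\int_\Omega|\nabla u|^q$ subject to $\int_\Omega|u|^q=\rho$, i.e. it realizes $c_1(p,q;\rho)$. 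Testing \eqref{e2} with $v=u_\lambda$ gives $\int_\Omega|\nabla u_\lambda|^p+\int_\Omega|\nabla u_\lambda|^q=\lambda\rho$, which is exactly the identity characterizing $\lambda^D_1(p,q;\rho)$; hence $\lambda=\lambda^D_1(p,q;\rho)$ is a first eigenvalue.

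The routine parts (coercivity, weak lower semicontinuity, the test-function estimate) are standard; the step requiring the most care is the last one, namely verifying that the global unconstrained minimizer genuinely solves the \emph{constrained} variational problem defining $c_1(p,q;\rho)$ with $\rho=\int_\Omega|u_\lambda|^q$, so that $\lambda$ matches the variationally defined first eigenvalue rather than merely being some eigenvalue produced by the minimization.
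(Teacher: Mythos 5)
Your proposal is correct and follows essentially the same route as the paper: unconstrained minimization of $E_\lambda$ via coercivity and weak lower semicontinuity, nontriviality of the minimizer by evaluating $E_\lambda(tu_1)$ for small $t$ using $\lambda>\lambda^D_1(q)$ and $p>q$, and then the Euler--Lagrange equation. Your final step identifying the minimizer with the constrained problem defining $c_1(p,q;\rho)$ for $\rho=\int_\Omega|u_\lambda|^q$ is a correct (and welcome) verification of the ``first eigenvalue'' property that the paper's proof leaves implicit.
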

\begin{proof}
Standard arguments show that $E_{\lambda}\in C^1(W^{1,p}_0(\Omega),\mathbb{R})$ with its derivative given by $$\langle E'_{\lambda}(u),v\rangle=\int_{\Omega}|\nabla u|^{p-2}\nabla u\cdot \nabla v ~dx+\int_{\Omega}|\nabla u|^{q-2}\nabla u\cdot \nabla v ~dx-\lambda\int_{\Omega}|u|^{q-2}u~v~dx,$$ for all $v\in W^{1,p}_0(\Omega)\subset W^{1,q}_0(\Omega).$ On the other hand $E_{\lambda}$ is weakly lower semi-continuous on $W^{1,p}_0(\Omega)\subset W^{1,q}_0(\Omega)$ since $E_{\lambda}$ is a continuous convex functional. This fact and Lemma \ref{coercivite} allow one to apply a direct calculus of variations result in order to obtain the existence of global minimum point of $E_{\lambda}$. We denote by $u_0$ such a global minimum point, i.e, $E_{\lambda}(u_0)=\min\limits_{u\in W^{1,p}_0(\Omega) }E_{\lambda}(u).$ We observe that for $u_0=sw_1$ (where $w_1$ stands for the corresponding eigenfunction of $\lambda_1^D(q)$), we have 
$$E_{\lambda}(u_0)=\frac{s^p}{p}\int_{\Omega}|\nabla w_1|^p~dx+\frac{s^q}{q}(\lambda_1^D(q)-\lambda)<0$$ for $s$ small enough. So there exists $u_{\lambda}\in W^{1,p}_0(\Omega)$ such that $E_{\lambda}(u_{\lambda})<0.$ But $E_{\lambda}(u_0)\leq E_{\lambda}(u_{\lambda})<0,$ which implies that $u_0\in W^{1,p}_0(\Omega)\backslash\{0\}.$ We also have that $\langle E'_{\lambda}(u_0),v\rangle=0, \forall \ v \in E,$ and this concludes the proof.
\end{proof}
\par \bigskip
To treat the case where $p<q,$ we constrain $E_{\lambda}$ on the Nehari set
\begin{equation*}
\begin{split}
    \mathcal{N}_{\lambda}&=\{u\in W^{1,q}_0(\Omega)/~u\neq 0,~\langle E'_{\lambda}(u),u\rangle=0\}\\
    &=\{u\in W^{1,q}_0(\Omega)/~u\neq 0,~\int_{\Omega}|\nabla u|^pdx+\int_{\Omega}|\nabla u|^qdx=\lambda\int_{\Omega}|u|^qdx\}.
\end{split}
\end{equation*}
On $\mathcal{N}_{\lambda},$ the functional $E_{\lambda}$ reads as $E_{\lambda}(u)=(\frac{1}{p}-\frac{1}{q})\displaystyle{\int_{\Omega}}|\nabla u|^pdx>0.$\\
\\
This shows at once that $E_{\lambda}$ is coercive in the sense that if $u\in\mathcal{N}_{\lambda}$ satisfies $\|u\|_{1,p}\rightarrow\infty,$ then $E_{\lambda}(u)\rightarrow \infty.$\\
\\
We define $m=\inf\limits_{u\in\mathcal{N}_{\lambda}}E_{\lambda}(u),$ and we show through a series of propositions that $m$ is attained by some $u\in\mathcal{N}_{\lambda}$ which is a critical point of $E_{\lambda}$ considered on the whole space $W^{1,q}_0(\Omega)\subset W^{1,p}_0(\Omega)$ and therefore a solution to equation (\ref{e1}).
\begin{proposition}\label{nonvide}
The set $\mathcal{N}_{\lambda}$ is not empty for $\lambda>\lambda^D_1(q).$
\end{proposition}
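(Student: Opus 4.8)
The plan is to produce an explicit element of $\mathcal{N}_{\lambda}$ by scaling the first eigenfunction of the purely $q$-homogeneous problem \eqref{dirichletlap}. Since we are in the regime $p<q$ and $\Omega$ is bounded, the embedding $W^{1,q}_0(\Omega)\subset W^{1,p}_0(\Omega)$ holds, so any $u\in W^{1,q}_0(\Omega)$ we use automatically satisfies $\|u\|_{1,p}<\infty$; this is what makes the term $\int_\Omega|\nabla u|^p\,dx$ in the definition of $\mathcal{N}_{\lambda}$ meaningful. First I would fix $u_1$, a first eigenfunction of \eqref{dirichletlap} normalized by $\int_\Omega|u_1|^q\,dx=1$, so that by the variational characterization $\int_\Omega|\nabla u_1|^q\,dx=\lambda^D_1(q)$, and $\|u_1\|_{1,p}>0$ because $u_1\neq 0$.

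The core of the argument is a one-dimensional fibering computation along the ray $t\mapsto tu_1$, $t>0$. Using $q$-homogeneity of each integral one finds
$$
\langle E'_{\lambda}(tu_1),tu_1\rangle=t^p\|u_1\|_{1,p}^p+t^q\big(\lambda^D_1(q)-\lambda\big).
$$
The requirement $tu_1\in\mathcal{N}_{\lambda}$ is precisely the equation $\langle E'_{\lambda}(tu_1),tu_1\rangle=0$. Since we seek $t>0$, I divide by $t^p$ and reduce the condition to
$$
\|u_1\|_{1,p}^p=t^{\,q-p}\big(\lambda-\lambda^D_1(q)\big).
$$

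Here the hypothesis $\lambda>\lambda^D_1(q)$ enters decisively: it makes the coefficient $\lambda-\lambda^D_1(q)$ strictly positive, while $q-p>0$ guarantees that $t\mapsto t^{\,q-p}$ is a strictly increasing bijection of $(0,\infty)$ onto itself. Hence the displayed equation has a unique positive root
$$
t_0=\left(\frac{\|u_1\|_{1,p}^p}{\lambda-\lambda^D_1(q)}\right)^{1/(q-p)},
$$
and $t_0u_1\in\mathcal{N}_{\lambda}$, which proves $\mathcal{N}_{\lambda}\neq\emptyset$. There is no real obstacle in this proposition; the only points deserving attention are the sign of $\lambda-\lambda^D_1(q)$ and the strict monotonicity of the power $t^{\,q-p}$, both of which are exactly what the standing assumptions $p<q$ and $\lambda>\lambda^D_1(q)$ supply. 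I would also remark that the threshold is sharp for this construction, since for $\lambda\le\lambda^D_1(q)$ the quantity $\langle E'_{\lambda}(tu_1),tu_1\rangle$ remains positive for every $t>0$, in agreement with the earlier nonexistence proposition.
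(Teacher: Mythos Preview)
Your proof is correct and follows essentially the same fibering argument as the paper: both pick a nonzero $u\in W^{1,q}_0(\Omega)$ with $\int_\Omega|\nabla u|^q\,dx<\lambda\int_\Omega|u|^q\,dx$ and solve $\langle E'_\lambda(tu),tu\rangle=0$ explicitly for $t>0$, exploiting $q-p>0$. The only cosmetic difference is that you take $u=u_1$, the first $q$-eigenfunction, whereas the paper keeps $u$ generic; your choice is simply a specific instance of the paper's, and the resulting formulas for $t$ agree.
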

\begin{proof}
Since $\lambda>\lambda^D_1(q)$ there exists $u\in W^{1,q}_0(\Omega)$ not identically zero such that $\int_{\Omega}|\nabla u|^qdx<\lambda\int_{\Omega}|u|^qdx.$ We then see that $tu\in \mathcal{N}_{\lambda}$ for some $t>0.$ Indeed, $tu\in \mathcal{N}_{\lambda}$ is equivalent to
$$t^p\int_{\Omega}|\nabla u|^pdx+t^q\int_{\Omega}|\nabla u|^qdx=t^q\lambda\int_{\Omega}|u|^qdx,$$ which is solved by $t=\left(\frac{\int_{\Omega}|\nabla u|^pdx}{\lambda\int_{\Omega}|u|^qdx-\int_{\Omega}|\nabla u|^qdx}\right)^{\frac{1}{q-p}}>0.$
\end{proof}
\begin{proposition}\label{minborne}
Every minimizing sequence for $E_{\lambda}$ on $\mathcal{N}_{\lambda}$ is bounded in $W^{1,q}_0(\Omega).$
\end{proposition}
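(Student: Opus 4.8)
The plan is to argue by contradiction, using that on $\mathcal{N}_{\lambda}$ the functional reduces to a multiple of the $p$-energy. First I fix a minimizing sequence $\{u_n\}\subset\mathcal{N}_{\lambda}$, so that $E_{\lambda}(u_n)\to m$. Since on $\mathcal{N}_{\lambda}$ one has $E_{\lambda}(u)=\big(\frac1p-\frac1q\big)\int_{\Omega}|\nabla u|^p\,dx$ (with $\frac1p-\frac1q>0$ because $p<q$), the $p$-energies $\int_{\Omega}|\nabla u_n|^p\,dx$ are bounded, i.e. $\|u_n\|_{1,p}$ is bounded; this is immediate. The real content is to upgrade this to boundedness of $\|u_n\|_{1,q}$, since for $p<q$ the space $W^{1,q}_0(\Omega)$ carries the stronger norm and boundedness in $W^{1,p}_0(\Omega)$ does not by itself suffice.

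Suppose, for contradiction, that $\|u_n\|_{1,q}\to\infty$ along a subsequence, and set $v_n=u_n/\|u_n\|_{1,q}$, so that $\|v_n\|_{1,q}=1$. Two limits follow from the bounds above. Dividing the $p$-energy by $\|u_n\|_{1,q}^p=\big(\int_{\Omega}|\nabla u_n|^q\,dx\big)^{p/q}$ gives $\int_{\Omega}|\nabla v_n|^p\,dx=\|u_n\|_{1,p}^p/\|u_n\|_{1,q}^p\to 0$, because the numerator stays bounded while $p/q>0$ and the denominator blows up. Rewriting the Nehari identity $\int_{\Omega}|\nabla u_n|^p\,dx+\int_{\Omega}|\nabla u_n|^q\,dx=\lambda\int_{\Omega}|u_n|^q\,dx$ and dividing by $\|u_n\|_{1,q}^q$ yields $\frac{a_n}{b_n}+1=\lambda\int_{\Omega}|v_n|^q\,dx$ with $a_n=\|u_n\|_{1,p}^p$, $b_n=\|u_n\|_{1,q}^q$, so that $\int_{\Omega}|v_n|^q\,dx\to 1/\lambda$.

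Next I pass to a weak limit. Since $\{v_n\}$ is bounded in $W^{1,q}_0(\Omega)$, up to a further subsequence $v_n\weakto v$ in $W^{1,q}_0(\Omega)$ and, by the compact Rellich embedding $W^{1,q}_0(\Omega)\hookrightarrow L^q(\Omega)$, $v_n\to v$ strongly in $L^q(\Omega)$; hence $\int_{\Omega}|v|^q\,dx=1/\lambda>0$, so $v\neq 0$. On the other hand $\int_{\Omega}|\nabla v_n|^p\,dx\to 0$ means $\nabla v_n\to 0$ strongly in $L^p(\Omega)^N$; since $\Omega$ is bounded we have $L^q(\Omega)\hookrightarrow L^p(\Omega)$, so the weak convergence $\nabla v_n\weakto\nabla v$ in $L^q$ also holds weakly in $L^p$, and uniqueness of weak limits forces $\nabla v=0$ a.e. By the Poincaré inequality $\int_{\Omega}|v|^q\,dx\le\frac{1}{\lambda^D_1(q)}\int_{\Omega}|\nabla v|^q\,dx=0$, whence $v=0$, contradicting $v\neq 0$. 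Thus $\|u_n\|_{1,q}$ stays bounded.

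The step I expect to be the crux is the last one: reconciling the two scales of integrability, namely deducing that the weak $W^{1,q}$-limit has vanishing gradient from the fact that only the $L^p$-norm of the gradient is known to vanish. This is precisely where the hypothesis $p<q$ and the boundedness of $\Omega$ (giving $L^q\hookrightarrow L^p$ and hence comparability of the weak limits) are essential, while the rest is routine. An alternative, under the subcritical assumption $q\le p^{\ast}$, would be to bound $\int_{\Omega}|u_n|^q\,dx$ directly via the Sobolev embedding $W^{1,p}_0(\Omega)\hookrightarrow L^q(\Omega)$ and then read off the bound on $\int_{\Omega}|\nabla u_n|^q\,dx$ from the Nehari identity; the contradiction argument above has the advantage of requiring no growth restriction.
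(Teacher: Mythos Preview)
Your proof is correct and follows essentially the same contradiction-via-rescaling argument as the paper; the only cosmetic difference is that you normalize by $\|u_n\|_{1,q}$ (so boundedness of $v_n$ in $W^{1,q}_0$ is automatic), whereas the paper normalizes by $\|u_n\|_q$ and reads the $W^{1,q}_0$-bound for $w_n=u_n/\|u_n\|_q$ off the Nehari constraint. In both cases one shows the rescaled sequence has $\int_\Omega|\nabla(\cdot)|^p\to 0$, hence weak limit zero, which contradicts the nonvanishing $L^q$-limit.
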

\begin{proof}
Let $\{u_n\}_{n\geq0}\subset \mathcal{N}_{\lambda}$ be a minimizing sequence of $E_{\lambda}|_{\mathcal{N}_{\lambda}}$, i.e. $E_{\lambda}(u_n)\rightarrow m=\displaystyle{\inf_{v\in\mathcal{N}_{\lambda}}}E_{\lambda}(v).$
Then
\begin{equation}\label{m1}
\lambda\int_{\Omega}|u_n|^q~dx-
\int_{\Omega}|\nabla u_n|^q~dx=\int_{\Omega}|\nabla u_n|^p~dx\rightarrow\left(\frac{1}{p}-\frac{1}{q}\right)^{-1}m,~\text{as $n\rightarrow\infty$}.
\end{equation}
Suppose on the contrary that $\{u_n\}_{n\geq0}$ is not bounded i.e. $\displaystyle\int_{\Omega}|\nabla u_n|^q~dx\rightarrow\infty$ as $n\rightarrow\infty$. Then we have $\displaystyle{\int_{\Omega}}|u_n|^q~dx
\rightarrow\infty$ as $n\rightarrow\infty$, using relation (\ref{m1}). We set $w_n=\frac{u_n}{\|u_n\|_q}.$ Since $\displaystyle{\int_{\Omega}}|\nabla u_n|^q~dx<\lambda\displaystyle{\int_{\Omega}}|u_n|^q~dx
$, we deduce that $\displaystyle{\int_{\Omega}}|\nabla w_n|^q~dx<\lambda,$ for each $n$ and  $\|w_n\|_{1,q}<\lambda^{1/q}.$ Hence $\{w_n\}\subset W^{1,q}_0(\Omega)$ is bounded in $W^{1,q}_0(\Omega).$ Therefore there exists $w_0\in W^{1,q}_0(\Omega)$ such that $w_n\rightharpoonup w_0$ in $W^{1,q}_0(\Omega)\subset W^{1,p}_0(\Omega)$ and $w_n\rightarrow w_0$ in $L^q(\Omega).$ 
Dividing relation (\ref{m1}) by $\|u_n\|^p_{q}$, we get $$\int_{\Omega}|\nabla w_n|^p~dx=\frac{\lambda\displaystyle{\int_{\Omega}}|u_n|^q~dx-
\displaystyle{\int_{\Omega}}|\nabla u_n|^q~dx}{\|u_n\|^p_{q}}\rightarrow 0~~\text{as~$n \rightarrow\infty$},$$ since $\lambda\int_{\Omega}|u_n|^q~dx-
\int_{\Omega}|\nabla u_n|^q~dx\rightarrow\left(\frac{1}{p}-\frac{1}{q}\right)^{-1}m<\infty~\text{as $n\rightarrow\infty$}$ and $\|u_n\|^p_{q}\rightarrow\infty$ as $n\rightarrow\infty$. On the other hand, since $w_n\rightharpoonup w_0$ in $W^{1,p}_0(\Omega)$, we infer that $\displaystyle{\int_{\Omega}}|\nabla w_0|^p~dx\leq \lim_{n\rightarrow\infty}\inf\displaystyle
{\int_{\Omega}}|\nabla w_n|^p~dx=0$ and consequently $w_0=0$. Therefore $w_n\rightarrow 0$ in $L^q(\Omega),$ which is a contradiction since $\|w_n\|_{q}=1$. Hence, $\{u_n\}_{n\geq 0}$ is bounded in $W^{1,q}_0(\Omega).$
\end{proof}
\begin{proposition}\label{minpositive}
We have $m=\inf\limits_{u\in\mathcal{N}_{\lambda}}E_{\lambda}(u)>0.$
\end{proposition}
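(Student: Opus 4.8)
The plan is to argue by contradiction, using the identity $E_{\lambda}(u)=\big(\frac1p-\frac1q\big)\int_{\Omega}|\nabla u|^p\,dx$ that holds on $\mathcal{N}_{\lambda}$. Since $p<q$ and $u\neq 0$, each value $E_{\lambda}(u)$ is strictly positive, so $m\ge 0$; the only issue is to exclude $m=0$. Suppose then that $m=0$ and pick a minimizing sequence $\{u_n\}\subset\mathcal{N}_{\lambda}$, so that $\int_{\Omega}|\nabla u_n|^p\,dx\to 0$ because $(\frac1p-\frac1q)>0$. By Proposition \ref{minborne} this sequence is bounded in $W^{1,q}_0(\Omega)$, hence, up to a subsequence, $u_n\rightharpoonup u_0$ in $W^{1,q}_0(\Omega)\subset W^{1,p}_0(\Omega)$ and $u_n\to u_0$ in $L^q(\Omega)$.

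First I would show that the whole sequence collapses to zero in the stronger norm. Weak lower semicontinuity of the $W^{1,p}_0$-seminorm gives $\int_{\Omega}|\nabla u_0|^p\,dx\le\liminf_n\int_{\Omega}|\nabla u_n|^p\,dx=0$, so $u_0=0$ by the Poincar\'e inequality. Consequently $\int_{\Omega}|u_n|^q\,dx\to 0$, and feeding this into the Nehari constraint $\int_{\Omega}|\nabla u_n|^p\,dx+\int_{\Omega}|\nabla u_n|^q\,dx=\lambda\int_{\Omega}|u_n|^q\,dx$ forces $\int_{\Omega}|\nabla u_n|^q\,dx\to 0$ as well; that is, $\|u_n\|_{1,q}\to 0$.

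The heart of the argument is then a rescaling that exploits $p<q$. Put $w_n=u_n/\|u_n\|_{1,q}$, so that $\|w_n\|_{1,q}=1$; dividing the Nehari identity by $\|u_n\|_{1,q}^{q}$ and using $\int_{\Omega}|\nabla w_n|^q\,dx=1$ yields
$$
\|u_n\|_{1,q}^{\,p-q}\int_{\Omega}|\nabla w_n|^p\,dx+1=\lambda\int_{\Omega}|w_n|^q\,dx. \qquad (\star)
$$
The left-hand side is at least $1$, so $\lambda\int_{\Omega}|w_n|^q\,dx\ge 1$; as $\{w_n\}$ is bounded in $W^{1,q}_0(\Omega)$ I pass to a subsequence with $w_n\rightharpoonup w_0$ and $w_n\to w_0$ in $L^q(\Omega)$, whence $\lambda\int_{\Omega}|w_0|^q\,dx\ge 1$ and in particular $w_0\neq 0$. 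Weak lower semicontinuity again gives $\liminf_n\int_{\Omega}|\nabla w_n|^p\,dx\ge\int_{\Omega}|\nabla w_0|^p\,dx=:\delta>0$.

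Finally I would read off the contradiction from $(\star)$. Because $p-q<0$ and $\|u_n\|_{1,q}\to 0$, the factor $\|u_n\|_{1,q}^{\,p-q}\to+\infty$, while $\int_{\Omega}|\nabla w_n|^p\,dx\ge\delta/2$ for large $n$; hence the left-hand side of $(\star)$ diverges to $+\infty$. But its right-hand side $\lambda\int_{\Omega}|w_n|^q\,dx$ stays bounded, since $\{w_n\}$ is bounded in $L^q(\Omega)$. This contradiction shows $m>0$. The main obstacle is precisely this last step: one must rule out that the infimum is approached by functions shrinking to the origin, and the normalization $(\star)$ together with the sign of $p-q$ is exactly what makes the $p$-term blow up after rescaling while the $q$-term remains $O(1)$.
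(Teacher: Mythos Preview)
Your proof is correct and follows the same overall strategy as the paper: assume $m=0$, show the minimizing sequence collapses to zero, then rescale and exploit the inhomogeneity $p<q$ to derive a contradiction. The only difference is the choice of normalization. The paper sets $w_n=u_n/\|u_n\|_q$ (the $L^q$ norm), obtains $\int_{\Omega}|\nabla w_n|^p\,dx=\|u_n\|_q^{\,q-p}\big(\lambda-\int_{\Omega}|\nabla w_n|^q\,dx\big)\to 0$ since $\|u_n\|_q\to 0$ and $q-p>0$, and then concludes $w_0=0$, contradicting $\|w_n\|_q=1$. You instead normalize by $\|u_n\|_{1,q}$, which fixes $\int_{\Omega}|\nabla w_n|^q\,dx=1$ and shifts the contradiction to the blow-up of $\|u_n\|_{1,q}^{\,p-q}\int_{\Omega}|\nabla w_n|^p\,dx$ against a bounded right-hand side. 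The two arguments are mirror images of one another; your version has the small advantage that the bound $\|w_n\|_{1,q}=1$ is immediate, whereas the paper must first observe $\int_{\Omega}|\nabla w_n|^q\,dx<\lambda$ to get boundedness of the rescaled sequence.
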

\begin{proof}
Assume by contradiction that $m=0$. Then, for $\{u_n\}_{n\geq 0}$ as in Proposition \ref{minborne}, we have

\begin{equation}\label{e6}
0<\lambda\int_{\Omega}|u_n|^q~dx-
\int_{\Omega}|\nabla u_n|^q~dx=\int_{\Omega}|\nabla u_n|^p~dx\rightarrow 0, \text{as $n\rightarrow\infty$}.
\end{equation}
By Proposition \ref{minborne}, we deduce that $\{u_n\}_{n\geq 0}$ is bounded in $W^{1,q}_0(\Omega).$ Therefore there exists $u_0\in W^{1,q}_0(\Omega)$ such that $u_n \rightharpoonup u_0$ in $W^{1,q}_0(\Omega)\subset W^{1,p}_0(\Omega)$ and $u_n\rightarrow u_0$ in $L^q(\Omega).$

Thus $\displaystyle{\int_{\Omega}}|\nabla u_0|^p\leq \lim_{n\rightarrow\infty}\inf\displaystyle
{\int_{\Omega}}|\nabla u_n|^p~dx=0.$ Consequently  $u_0=0$, $u_n \rightharpoonup 0$ in $W^{1,q}_0(\Omega)\subset W^{1,p}_0(\Omega)$ and $u_n\rightarrow 0$ in $L^q(\Omega).$ Writing again $w_n=\frac{
u_n}{\|u_n\|_q}$ we have
 $$0<\frac{\lambda\displaystyle{\int_{\Omega}}|u_n|^q~dx-\int_{\Omega}|\nabla u_n|^q~dx}{\|u_n\|^q_{q}}=\|u_n\|^{p-q}_{q} \displaystyle{\int_{\Omega}}|\nabla w_n|^p~dx,
$$ 
and
\begin{eqnarray*}
\int_{\Omega}|\nabla w_n|^p~dx
=\|u_n\|^{q-p}_{q}\Big(\lambda-
\displaystyle{\int_{\Omega}}|\nabla w_n|^2~dx\Big)\rightarrow 0~ \text{as $n\rightarrow \infty$},
\end{eqnarray*}
since $\|u_n\|_{q}\rightarrow 0$ and $p<q$, $\{w_n\}_{n\geq 0}$ is bounded in $W_0^{1,q}(\Omega).$ Next since $w_n \rightharpoonup w_0$, we deduce that $\displaystyle{\int_{\Omega}}|\nabla w_0|^p~dx\leq \lim_{n\rightarrow\infty}\inf\displaystyle
{\int_{\Omega}}|\nabla w_n|^p~dx=0$ and we have $w_0=0.$
This is a contradiction since $\|w_n\|_{q}=1$ for each $n$. Thus $m>0$.
\end{proof}

\begin{proposition}\label{minatteind}
 There exists $u\in \mathcal{N}_{\lambda}$ such that
$E_{\lambda}(u)=m.$
\end{proposition}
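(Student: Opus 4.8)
The plan is to take a minimizing sequence for $E_\lambda$ on $\mathcal{N}_\lambda$, extract a weak limit, and then use a rescaling back onto the Nehari manifold combined with weak lower semicontinuity to show that the level $m$ is attained. The two facts that make the argument work are the boundedness from Proposition \ref{minborne} and the strict positivity $m>0$ from Proposition \ref{minpositive}.

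First I would fix $\{u_n\}\subset\mathcal{N}_\lambda$ with $E_\lambda(u_n)\to m$. By Proposition \ref{minborne} it is bounded in $W^{1,q}_0(\Omega)$, so after passing to a subsequence there is $u\in W^{1,q}_0(\Omega)$ with $u_n\rightharpoonup u$ in $W^{1,q}_0(\Omega)$ and, by the compact Sobolev embedding, $u_n\to u$ in $L^q(\Omega)$; in particular $\int_\Omega|u_n|^qdx\to\int_\Omega|u|^qdx$. Since on $\mathcal{N}_\lambda$ one has $E_\lambda(u_n)=(\frac1p-\frac1q)\int_\Omega|\nabla u_n|^pdx$, it follows that $\int_\Omega|\nabla u_n|^pdx\to (\frac1p-\frac1q)^{-1}m>0$. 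The first key step is then to rule out $u=0$: if $u=0$, the $L^q$ convergence gives $\int_\Omega|u_n|^qdx\to0$, and the Nehari identity $\int_\Omega|\nabla u_n|^pdx+\int_\Omega|\nabla u_n|^qdx=\lambda\int_\Omega|u_n|^qdx$ would force $\int_\Omega|\nabla u_n|^pdx\to0$, contradicting $m>0$. Hence $u\neq0$.

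Next, weak lower semicontinuity of $u\mapsto\int_\Omega|\nabla u|^pdx$ and $u\mapsto\int_\Omega|\nabla u|^qdx$, together with the strong $L^q$ convergence, yields $\int_\Omega|\nabla u|^pdx+\int_\Omega|\nabla u|^qdx\le\liminf_{n}\big(\int_\Omega|\nabla u_n|^pdx+\int_\Omega|\nabla u_n|^qdx\big)=\lambda\int_\Omega|u|^qdx$, i.e. $\langle E'_\lambda(u),u\rangle\le0$. Because $u\neq0$ this gives $\lambda\int_\Omega|u|^qdx-\int_\Omega|\nabla u|^qdx\ge\int_\Omega|\nabla u|^pdx>0$, so exactly as in Proposition \ref{nonvide} there is a unique $t>0$ with $tu\in\mathcal{N}_\lambda$, determined by $t^{q-p}=\int_\Omega|\nabla u|^pdx\big/\big(\lambda\int_\Omega|u|^qdx-\int_\Omega|\nabla u|^qdx\big)$; the inequality just displayed shows the denominator dominates the numerator, so $t\in(0,1]$. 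Finally, using $t\le1$ and weak lower semicontinuity once more, $E_\lambda(tu)=(\frac1p-\frac1q)\,t^p\int_\Omega|\nabla u|^pdx\le(\frac1p-\frac1q)\int_\Omega|\nabla u|^pdx\le(\frac1p-\frac1q)\liminf_n\int_\Omega|\nabla u_n|^pdx=m$, while $tu\in\mathcal{N}_\lambda$ forces $E_\lambda(tu)\ge m$. Therefore $E_\lambda(tu)=m$, and $v:=tu$ is the sought minimizer.

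The main obstacle is getting the directions of the estimates to cooperate. The weak limit need not itself lie on $\mathcal{N}_\lambda$, which is why the rescaling parameter $t$ is introduced; the crucial point is that the sign of $\langle E'_\lambda(u),u\rangle$ forces $t\le1$, and it is precisely this bound $t\le1$ (rather than $t\ge1$) that renders $E_\lambda(tu)\le m$ compatible with the constraint estimate $E_\lambda(tu)\ge m$. The nonvanishing of the weak limit, on which the whole rescaling rests, is in turn guaranteed by the strict positivity $m>0$ established in Proposition \ref{minpositive}.
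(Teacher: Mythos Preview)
Your proof is correct and follows essentially the same route as the paper's: minimizing sequence, weak limit, rule out $u=0$ via $m>0$, pass the Nehari identity to the limit with weak lower semicontinuity, and rescale by $t\in(0,1]$ back onto $\mathcal{N}_\lambda$. The only cosmetic difference is that the paper argues by contradiction that the strict inequality $\langle E'_\lambda(u),u\rangle<0$ cannot occur (so the weak limit itself lies on $\mathcal{N}_\lambda$), whereas you directly exhibit $tu$ as the minimizer; your sandwich $m\le E_\lambda(tu)\le m$ in fact forces $t=1$ anyway, so the two arguments coincide.
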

\begin{proof}
Let $\{u_n\}_{n\geq 0}\subset\mathcal{N}_{\lambda}$ be a minimizing sequence, i.e., $E_{\lambda}(u_n)\rightarrow m$ as $n\rightarrow\infty.$ Thanks to Proposition \ref{minborne}, we have that $\{u_n\}$ is bounded in $W_0^{1,q}(\Omega).$ It follows that there exists $u_0\in W_0^{1,q}(\Omega)$ such that $u_n\rightharpoonup u_0$ in $W_0^{1,q}(\Omega)\subset W_0^{1,p}(\Omega)$ and strongly in $L^q(\Omega).$ The results in the two propositions above guarantee that $E_{\lambda}(u_0)\leq \displaystyle{\lim_{n\rightarrow\infty}}\inf E_{\lambda}(u_n)=m.$ Since for each $n$ we have $u_n\in\mathcal{N}_{\lambda}$, then
\begin{equation}\label{att1}
\int_{\Omega}|\nabla u_n|^q~dx+\int_{\Omega}|\nabla u_n|^p~dx=\lambda \int_{\Omega}|u_n|^q~dx~~~\text{for all $n$.}
\end{equation}
Assuming $u_0\equiv 0$ on $\Omega$ implies that $ \displaystyle{\int_{\Omega}}|u_n|^q~dx\rightarrow 0$ as $n\rightarrow \infty$, and by relation $(\ref{att1})$ we obtain that $\displaystyle{\int_{\Omega}}|\nabla u_n|^q~dx\rightarrow 0$ as $n\rightarrow \infty.$ Combining this with the fact that $u_n$ converges weakly to $0$ in $W_0^{1,q}(\Omega)$, we deduce that $u_n$ converges strongly to $0$ in $W_0^{1,q}(\Omega)$ and consequently in $W_0^{1,p}(\Omega)$.
Hence we infer that \begin{eqnarray*}
\lambda\int_{\Omega}|u_n|^q~dx-
\int_{\Omega}|\nabla u_n|^q~dx=\int_{\Omega}|\nabla u_n|^p~dx\rightarrow 0, \text{as $n\rightarrow\infty$}.
\end{eqnarray*}
Next, using similar argument as the one used in the proof of Proposition \ref{minpositive}, we will reach to a contradiction, which shows that $u_0\not\equiv 0.$ Letting $n\rightarrow \infty$ in relation (\ref{att1}), we deduce that 
\begin{eqnarray*}
\int_{\Omega}|\nabla u_0|^q~dx+\int_{\Omega}|\nabla u_0|^p~dx\leq\lambda \int_{\Omega}|u_0|^q~dx.
\end{eqnarray*}
If there is equality in the above relation then $u_0\in\mathcal{N}_{\lambda}$ and $m\leq E_{\lambda}(u_0)$. Assume by contradiction that
\begin{equation}\label{att2}
\int_{\Omega}|\nabla u|^q~dx+\int_{\Omega}|\nabla u|^p~dx<\lambda \int_{\Omega}|u|^q~dx.
\end{equation} 
Let $t>0$ be such that $tu_0\in\mathcal{N}_{\lambda},$ i.e.,$$t=\Bigg(\frac{\lambda\displaystyle{\int_{\Omega}}|u_0|^q~dx-
\displaystyle{\int_{\Omega}}|\nabla u_0|^q~dx}{\displaystyle{\int_{\Omega}}|\nabla u_0|^p~dx}\ \Bigg)^{\frac{1}{p-q}}
.$$
We note that $t\in(0,1)$ since $1<t^{p-q}$ (using (\ref{att2})). Finally, since $tu_0\in \mathcal{N}_{\lambda}$ with $t\in(0,1)$ we have 
\begin{eqnarray*}
0<m\leq E_{\lambda}(tu_0)&=&\left(\frac{1}{p}-\frac{1}{q}\right)\int_{\Omega}|\nabla(tu_0)|^p~dx=t^p\left(\frac{1}{p}-\frac{1}{q}\right)\int_{\Omega}|\nabla u_0|^p~dx\\
&=&t^p E_{\lambda}(u_0)\\
&\leq & t^p\lim_{k\rightarrow\infty}\inf E_{\lambda}(u_k)=t^p m<m~\text{for $t\in(0,1)$,}
\end{eqnarray*}
and this is a contradiction which assures that relation (\ref{att2}) cannot hold and consequently we have $u_0\in \mathcal{N}_{\lambda}$. Hence $m\leq E_{\lambda}(u_0)$ and $ m= E_{\lambda}(u_0)$.
\end{proof}
\begin{theorem}
Let $p<q.$ Then every $\lambda\in (\lambda_1^D(q),\infty)$ is a first eigenvalue of problem (\ref{e1}).
\end{theorem}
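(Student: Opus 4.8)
The plan is to assemble the four preceding propositions into a minimizer on the Nehari set and then upgrade it to a genuine critical point of $E_\lambda$ on all of $W^{1,q}_0(\Omega)$. By Propositions \ref{nonvide}, \ref{minborne}, \ref{minpositive} and \ref{minatteind}, the set $\mathcal{N}_\lambda$ is nonempty, minimizing sequences are bounded, $m=\inf_{\mathcal{N}_\lambda}E_\lambda>0$, and this infimum is attained at some $u_0\in\mathcal{N}_\lambda$. It then remains to show that $u_0$ solves \eqref{e1} in the weak sense and that it may be chosen of one sign, so that $\lambda$ qualifies as a \emph{first} eigenvalue.

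To identify $u_0$ as a critical point, I would apply the Lagrange multiplier rule on the constraint $G(u):=\langle E_\lambda'(u),u\rangle=\int_\Omega|\nabla u|^p\,dx+\int_\Omega|\nabla u|^q\,dx-\lambda\int_\Omega|u|^q\,dx$, so that $\mathcal{N}_\lambda=\{u\neq 0:G(u)=0\}$. Since $u_0$ minimizes the $C^1$ functional $E_\lambda$ on $\{G=0\}$, there is $\mu\in\mathbb{R}$ with $E_\lambda'(u_0)=\mu\,G'(u_0)$ as soon as $u_0$ is a regular point of $G$, which the following computation confirms. Testing $G'(u_0)$ against $u_0$ and using the constraint $\int_\Omega|\nabla u_0|^p+\int_\Omega|\nabla u_0|^q=\lambda\int_\Omega|u_0|^q$ to eliminate the term $q\lambda\int_\Omega|u_0|^q$, one finds
$$\langle G'(u_0),u_0\rangle=p\int_\Omega|\nabla u_0|^p\,dx+q\int_\Omega|\nabla u_0|^q\,dx-q\lambda\int_\Omega|u_0|^q\,dx=(p-q)\int_\Omega|\nabla u_0|^p\,dx.$$
Because $p<q$ and $\int_\Omega|\nabla u_0|^p\,dx=(\tfrac1p-\tfrac1q)^{-1}m>0$, this quantity is strictly negative; hence $G'(u_0)\neq 0$, so $u_0$ is a regular point, and the same identity forces $\mu=0$, since testing $E_\lambda'(u_0)=\mu G'(u_0)$ against $u_0$ gives $0=\langle E_\lambda'(u_0),u_0\rangle=\mu\langle G'(u_0),u_0\rangle$. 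Therefore $E_\lambda'(u_0)=0$, i.e. $u_0$ is a nontrivial weak solution of \eqref{e1} and $\lambda$ is an eigenvalue.

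Finally, to obtain the sign property I would replace $u_0$ by $|u_0|$. Since both the constraint $G$ and the energy $E_\lambda$ depend on $u$ only through $|\nabla u|$ and $|u|$, we have $|u_0|\in\mathcal{N}_\lambda$ and $E_\lambda(|u_0|)=E_\lambda(u_0)=m$, so $|u_0|$ is again a minimizer and, by the argument above, a nonnegative weak solution. Applying the Harnack inequality, exactly as in the earlier proposition on the sign of first eigenfunctions, gives $|u_0|>0$ throughout $\Omega$, whence $u_0$ has constant sign and $\lambda$ is a first eigenvalue.

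The main obstacle I anticipate is the justification of the Lagrange multiplier step, namely verifying that $G\in C^1(W^{1,q}_0(\Omega),\mathbb{R})$ and that $u_0$ is a regular point of the constraint manifold. The sign computation $\langle G'(u_0),u_0\rangle=(p-q)\int_\Omega|\nabla u_0|^p\,dx<0$ is precisely what simultaneously guarantees regularity and annihilates the multiplier; the remainder is the routine assembly of the propositions already established.
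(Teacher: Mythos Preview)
Your argument is correct. The difference from the paper is methodological: the paper avoids the Lagrange multiplier rule and instead, for each test function $v$, builds an explicit curve $\delta\mapsto t(\delta)(u_0+\delta v)$ inside $\mathcal N_\lambda$ (with $t(\delta)$ given by the closed-form scaling that projects onto the Nehari set), and then differentiates $\delta\mapsto E_\lambda\big(t(\delta)(u_0+\delta v)\big)$ at $\delta=0$; since $\langle E_\lambda'(u_0),u_0\rangle=0$, the chain rule immediately yields $\langle E_\lambda'(u_0),v\rangle=0$. Your approach packages the same geometry into the computation $\langle G'(u_0),u_0\rangle=(p-q)\int_\Omega|\nabla u_0|^p<0$, which simultaneously certifies that $u_0$ is a regular point of $\{G=0\}$ and kills the multiplier. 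The two arguments are equivalent at bottom---both exploit that the Nehari constraint is ``natural'' in the radial direction---but yours is a bit more systematic, while the paper's curve construction is more hands-on and sidesteps any discussion of the $C^1$ regularity of $G$. The paper does not include the sign argument in this proof (it was handled separately in the proposition on positivity of first eigenfunctions), so your final paragraph adds a self-contained conclusion; note that strictly speaking you obtain a positive eigenfunction $|u_0|$, which is all that is needed.
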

\begin{proof}
Let $u\in \mathcal{N}_{\lambda}$ be such that $E_{\lambda}(u)=m$ (thanks to Proposition \ref{minatteind}). We show that $\langle E'_{\lambda}(u), v\rangle=0$ for all $v\in W^{1,q}_0(\Omega).$ We recall that for $u\in \mathcal{N}_{\lambda}$, we have \begin{eqnarray*}
\int_{\Omega}|\nabla u|^q~dx+\int_{\Omega}|\nabla u|^p~dx=\lambda \int_{\Omega}|u|^q~dx.
\end{eqnarray*} 
Let $v\in W^{1,q}_0(\Omega).$ For every $\delta$ in some small interval $(-\varepsilon,\varepsilon)$ certainly the function $u+\delta v$ does not vanish identically. Let $t(\delta)>0$ be a function such that $t(\delta)(u+\delta v)\in  \mathcal{N}_{\lambda},$ namely 
$$t(\delta)=\Bigg(\frac{\lambda\displaystyle{\int_{\Omega}}|u+\delta v|^q~dx-
\int_{\Omega}|\nabla(u+\delta v)|^q~dx}{\displaystyle{\int_{\Omega}}|\nabla(u+\delta v)|^p~dx}\ \Bigg)^{\frac{1}{p-q}}
.$$ The function $t(\delta)$ is a composition of differentiable functions, so it is differentiable. The precise expression of $t'$ does not matter here. Observe that $t(0)=1.$ The map $\delta\mapsto t(\delta)(u+\delta v) $
defines a curve on $\mathcal{N}_{\lambda}$ along which we evaluate $E_{\lambda}.$ Hence we define $\gamma: (-\varepsilon,\varepsilon)\rightarrow \mathbb{R}$ as $\gamma(\delta)=E_{\lambda}(t(\delta)(u+\delta v)).$ By construction, $\delta=0$ is a minimum point for $\gamma.$ Consequently $$0=\gamma'(0)=\langle E'_{\lambda}(t(0)u), t'(0)u+t(0)v\rangle=t'(0)\langle E'_{\lambda}(u),u\rangle+\langle E'_{\lambda}(u),v\rangle=\langle E'_{\lambda}(u),v\rangle$$ using the fact that $\langle E'_{\lambda}(u), u\rangle=0$ because $u\in \mathcal{N}_{\lambda}.$ We then obtained that $\langle E'_{\lambda}(u),v\rangle=0$ for all $v\in W^{1,q}_0(\Omega).$
\end{proof}

\par \bigskip
\section{Bifurcation}\label{S3}
In this section we discuss bifurcation phenomena for problem \eqref{e1}. We begin with the following.
\begin{definition}\label{bifurcation-theorem}
A real number $\mu $ is called a bifurcation point of (\ref{e1}) if and only if there is a sequence $(u_n,\mu_n)$ of solutions of (\ref{e1}) such that $u_n\not\equiv 0$ and 
$$\mu_n\rightarrow\mu, \ \|u_n\|_{1,s}\rightarrow 0,~~~\text{as}~~n\rightarrow\infty,~~s=p\ (\text{if }\ p > q),~~\text{or}\ \ s = q\ (\text{if }\ p < q).\
$$
\end{definition}
\par \medskip \noindent
{\bf Observations:}
Define $F: W^{1,p}_0(\Omega)\cap  W^{1,q}_0(\Omega)\rightarrow\mathbb{R}$ by $$F(u)=\frac{\frac{1}{p}\int_{\Omega}|\nabla u|^pdx+\frac{1}{q}\int_{\Omega}|\nabla u|^qdx}{\frac{1}{q}\int_{\Omega}|u|^qdx},~~\text{for all}~~u\in  W^{1,p}_0(\Omega)\cap  W^{1,q}_0(\Omega).$$
By setting $u=re_1$, where $e_1$ stands as the normalized eigenfunction associated to the eigenvalue $\lambda^D_1(q)$ of the $q$-homogenous equation \eqref{dirichletlap}, we then have
$$F(re_1)=\frac{\frac{r^{p-q}}{p}\int_{\Omega}|\nabla e_1|^pdx+\frac{1}{q}\int_{\Omega}|\nabla e_1|^qdx}{\frac{1}{q}\int_{\Omega}|e_1|^qdx}.$$
We distinguish two cases:
\begin{enumerate}
    \item Assume that $p>q.$ Thus we find that $F(re_1)\rightarrow\lambda^D_1(q)$ as $r\rightarrow 0,$ which indicates bifurcation in $0$ from $\lambda^D_1(q).$
    \item Assume that $p<q.$  We find that $F(re_1)\rightarrow\infty$ as $r\rightarrow 0,$ which indicates there is no bifurcation in $0$ from $\lambda^D_1(q).$ One is lead to look for bifurcation at infinity.
\end{enumerate}
\par \bigskip \noindent
Our aim is to show that the variational $q$-homogenous eigenvalues $\lambda^D_k(q)$ of equation \eqref{dirichletlap} are bifurcation points for the nonlinear eigenvalues $\lambda^D_k(p,q;\rho)$ of equation \eqref{e1}. More precisely, we will show that
$$ \lambda^D_k(p,q;\rho) \to \lambda^D_k(q) \ \ \text{as}\ \rho \to 0
.$$
\par \medskip
As in section \ref{S1}, let $D_{\rho}(p,q)=\{u\in W^{1,p}_0(\Omega)\setminus\{0\}\subset W^{1,q}_0(\Omega)\setminus\{0\}: \int_{\Omega}|u|^qdx=\rho\}$ and
$$\Gamma_{k,\rho}=\{A\subset D_{\rho}(p,q): A \ \text{symmetric}, A\ \text{compact}, \gamma(A)\geq k\}.
$$ 
By the definition of $\lambda_k^D(q)$ we know that for $\eps>0$ small there is $A_{\eps}\in\Gamma_{k,1}$ such that 
$$
\sup_{\{u\in A_{\eps},~\int_{\Omega}|u|^qdx=1\}}\int_{\Omega}|\n u|^qdx\leq \lambda^D_k(q) +\eps\ .
$$
We want to approximate $A_{\eps}$ by a finite-dimensional set. Since $A_{\eps}$ is compact, for every $\delta>0$ there exist a finite number of points $x_1,\dots,x_{n(\delta)}$ such that
\begin{equation}\label{Clarkn1}
    A_{\eps}\subset\bigcup_{i=1}^{n(\delta)}B_{\delta}(x_i).
\end{equation}
Let $E_n=\text{span}\{x_1,\dots,x_{n(\delta)}\},$ and set 
\begin{equation}\label{choose1}
    P_nA_{\eps}:=\{P_nx,~~x\in A_{\eps}\},
\end{equation}
where $P_nx\in E_n$ is such that $$\|x-P_nx\|_{1,q}=\inf\{\|x-z\|_{1,q},~~z\in E_n\}.$$ We claim that $\gamma(P_n A_{\eps})\geq k.$ Clearly, $P_n A_{\eps}$ is symmetric and compact. Furthermore, $0\not\in P_n A_{\eps}$; indeed since $A_{\eps}$ is compact, and $0\not\in A_{\eps}$, there is small ball $B_{\tau}(0)$ such that $A_{\eps}\cap B_{\tau}(0)=\emptyset.$ Now, choose $\delta>0$ in (\ref{Clarkn1}) such that $\delta<\tau/2.$ Then, for $x\in A_{\eps}$ there is $x_i\in E_n$, for some $i \in \{1,\dots,n(\delta)\}$, such that $\|x-x_i\|_{1,q}<\delta$, and hence 
$$\|x-P_n x\|_{1,q}=\inf\{\|x-z\|_{1,q},~~z\in E_n\}\leq \|x-x_i\|_{1,q}<\tau/2$$ and thus $P_nA_{\eps}\cap B_{\tau/2}(0)=\emptyset.$\\
Finally, we have to show that $\gamma(P_nA_{\eps})\geq k.$ This is again by approximation: since $\gamma(A_{\eps})\geq k,$ there exist a continuous and odd map $g: A_{\eps}\rightarrow \mathbb{R}^k\setminus\{0\}.$ Then by Tietze extension theorem there exist a continuous and odd map $\tilde{g}: W^{1,q}_0(\Omega)\rightarrow \mathbb{R}$ such that $\tilde{g}_{|A_{\eps}}=g.$ By continuity and compactness of $A_{\eps}$ we can conclude that\\ $\tilde{g}_{|P_nA_{\eps}}: W^{1,q}_0(\Omega)\rightarrow \mathbb{R}^k\setminus\{0\}.$ Now, again by approximation, we conclude that there is a $n = n(\eps)$ such that 
$$
\sup_{\{u\in P_nA_{\eps}\}}\int_{\Omega}|\n u|^qdx\leq \lambda^D_k(q) + 2\eps\ 
.$$
Finally, note that by homogeneity
$$
\inf_{A\in\Gamma_{k,\rho}}\sup_{u\in A}\int_{\Omega}|\nabla u|^q~dx=\lambda^D_k(q)\,\rho
$$
and hence also
\begin{equation}\label{PnA}
\sup_{\{u\in \rho\,P_nA_{\eps}\}}\int_{\Omega}|\n u|^qdx\leq \big(\lambda^D_k(q) + 2\eps \big)\,\rho.
\end{equation} 
\par \bigskip
Recall that by \eqref{ck} we have, for each integer $k > 0$,
$$
c_k(p,q;\rho)=\inf_{A\in\Gamma_{k,\rho}}\sup_{u\in A}\Big\{\frac{1}{p}\int_{\Omega}|\nabla u|^p~dx+\frac 1q\int_{\Omega}|\nabla u|^q~dx\Big\}
$$
We first prove the following lemma which is useful for the bifurcation result from zero.
\begin{lemma}\label{avanbifurc}
Let $p>q$. For any integer $k>0$ and  $\rho>0,$ $\eps>0,$ there exists a positive constant $C(\eps)$ such that the following estimate holds: 
$$
|c_k(p,q;\rho)- \frac 1q \, \lambda^D_k(q)\, \rho|\leq C(\eps)\rho^{p/q} + 2\eps \, \rho
.$$
\end{lemma}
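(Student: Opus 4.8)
The plan is to prove the estimate as a two-sided bound, handling the lower and upper inequalities by completely different (and unequal in difficulty) arguments, and then combining them.

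For the lower bound I would simply discard the nonnegative $p$-term. Since $\frac1p\int_\Omega|\nabla u|^p\,dx\geq 0$, for every $A\in\Gamma_{k,\rho}$ and every $u\in A$ we have $\frac1p\int_\Omega|\nabla u|^p\,dx+\frac1q\int_\Omega|\nabla u|^q\,dx\geq \frac1q\int_\Omega|\nabla u|^q\,dx$. Taking the supremum over $u\in A$ and then the infimum over $A\in\Gamma_{k,\rho}$, and invoking the homogeneity identity $\inf_{A\in\Gamma_{k,\rho}}\sup_{u\in A}\int_\Omega|\nabla u|^q\,dx=\lambda^D_k(q)\,\rho$ recorded just before the lemma, I obtain $c_k(p,q;\rho)\geq \frac1q\lambda^D_k(q)\,\rho$, i.e. $c_k(p,q;\rho)-\frac1q\lambda^D_k(q)\,\rho\geq 0$. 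This settles the lower half of the two-sided estimate with no error term at all.

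For the upper bound I would use the finite-dimensional competitor $\rho^{1/q}P_nA_\eps$ constructed in the preceding paragraphs, which lies in $\Gamma_{k,\rho}$: it is symmetric, compact, has genus $\geq k$, and (after a harmless radial renormalization by $(\int_\Omega|w|^q)^{-1/q}$, an odd homeomorphism that preserves the genus) its elements satisfy $\int_\Omega|u|^q\,dx=\rho$. Writing a generic element as $u=\rho^{1/q}w$ with $w\in P_nA_\eps$, the $q$-homogeneity of the gradient integrals gives $\int_\Omega|\nabla u|^q\,dx=\rho\int_\Omega|\nabla w|^q\,dx$ and $\int_\Omega|\nabla u|^p\,dx=\rho^{p/q}\int_\Omega|\nabla w|^p\,dx$. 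The $q$-contribution is controlled directly by \eqref{PnA}, yielding $\frac1q\int_\Omega|\nabla u|^q\,dx\leq \frac1q(\lambda^D_k(q)+2\eps)\rho$. For the $p$-contribution the crucial point is that $P_nA_\eps$ is a compact subset of the finite-dimensional space $E_n=\mathrm{span}\{x_1,\dots,x_{n(\delta)}\}\subset W^{1,p}_0(\Omega)$, on which $w\mapsto\int_\Omega|\nabla w|^p\,dx$ is continuous; hence $M(\eps):=\sup_{w\in P_nA_\eps}\int_\Omega|\nabla w|^p\,dx<\infty$, giving $\frac1p\int_\Omega|\nabla u|^p\,dx\leq \frac1p M(\eps)\rho^{p/q}$. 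Adding the two pieces produces $c_k(p,q;\rho)\leq \frac1p M(\eps)\rho^{p/q}+\frac1q\lambda^D_k(q)\rho+\frac{2\eps}{q}\rho$. Setting $C(\eps)=\frac1p M(\eps)$ and using $q>1$ so that $\frac{2\eps}{q}\leq 2\eps$, the two bounds combine into $0\leq c_k(p,q;\rho)-\frac1q\lambda^D_k(q)\rho\leq C(\eps)\rho^{p/q}+2\eps\rho$, which is exactly the claim.

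I expect the main obstacle to be the upper bound, concentrated in the finiteness and the $\eps$-dependence of the constant $M(\eps)$. The estimate genuinely forces the constant to depend on $\eps$: as $\eps\to 0$ the covering number $n(\delta)=n(\eps)$, the dimension of $E_n$, and the diameter of $P_nA_\eps$ in the $W^{1,p}_0$-norm all deteriorate, so the uniform $p$-energy bound over the competitor blows up. The structural reason the lemma is still useful is that, because $p>q$, the $p$-contribution scales like $\rho^{p/q}$ with exponent $p/q>1$, hence is of higher order in $\rho$ and negligible against the $O(\rho)$ main term as $\rho\to 0$; this is precisely what will let me first fix $\eps$ small and then send $\rho\to 0$ to conclude $\lambda^D_k(p,q;\rho)/\rho\to\lambda^D_k(q)$ and obtain bifurcation from zero. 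The only other point requiring mild care is ensuring the projected competitor lands exactly on the constraint surface $\{\int_\Omega|u|^q=\rho\}$, which is why I insert the genus-preserving radial renormalization noted above.
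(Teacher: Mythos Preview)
Your proof is correct and follows essentially the same route as the paper: the lower bound by dropping the $p$-term, and the upper bound by testing against the finite-dimensional competitor $P_nA_\eps$ scaled to the constraint surface, where finite-dimensionality gives the $\rho^{p/q}$ control on the $p$-energy. The only cosmetic difference is that the paper bounds $\int_\Omega|\nabla v|^p\,dx$ via the equivalence of the $\|\cdot\|_{1,p}$ and $\|\cdot\|_q$ norms on $E_n$, whereas you invoke compactness of $P_nA_\eps$ directly to get $M(\eps)<\infty$; both exploit the same finite-dimensional structure, and your added remark about the radial renormalization to land exactly on $\{\int_\Omega|u|^q=\rho\}$ is a point the paper glosses over.
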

\begin{proof}
For any $k>0,$ we clearly have $c_k(p,q;\rho)\geq \frac 1q\, \lambda^D_k(q)\, \rho.$ 
\par \smallskip \noindent
By \eqref{PnA} we can estimate 
\begin{align*}
    c_k(p,q,\rho)&=\inf_{A\in\Gamma_{k,\rho}}\sup_{u\in A}\Big\{\frac{1}{p}\int_{\Omega}|\nabla u|^p~dx+\frac 1q\int_{\Omega}|\nabla u|^q~dx\Big\}\\
    &\leq  \sup_{u\in \rho\, P_nA_{\eps}}\Big\{\frac{1}{p}\int_{\Omega}|\nabla u|^p~dx+\frac 1q\int_{\Omega}|\nabla u|^q~dx\Big\}\\
    &\leq \sup_{u\in \rho\,P_nA_{\eps}}\frac{1}{p}\int_{\Omega}|\nabla u|^p~dx+ \sup_{u\in \rho\, P_nA_{\eps}}\frac 1q\int_{\Omega}|\nabla u|^q~dx\\
    &\leq \frac{1}{p}\int_{\Omega}|\nabla v|^p~dx+\frac 1q(\lambda^D_k(q)+2\eps)\rho
\end{align*}
for some $v\in \rho\,P_nA_{\eps}$ with $\int_{\Omega}|v|^qdx=\rho.$ Since $ P_nA_{\eps}$ is finite-dimensional, there exists a positive constant $ C(\eps)$ such that
$$
\big(\int_{\Omega}|\n v|^p dx\big)^{1/p} \leq C(\eps) \big(\int_{\Omega}|v|^qdx\big)^{1/q} 
$$ 
and hence
$$
\int_{\Omega}|\n v|^p dx \leq C(\eps) \big(\int_{\Omega}|v|^qdx\big)^{p/q} = C(\eps)\, \rho^{p/q}
.$$
Finally, we get
$$
0\leq c_k(p,q;\rho)- \frac 1q\,\lambda^D_k(q)\, \rho\leq C(\eps)\rho^{p/q}+2\eps\rho
.$$ 
\end{proof}
\par \bigskip
\subsection{Bifurcation from zero}$ $
\par \smallskip \noindent
Here, we show that for equation (\ref{e1}), for $p > q$, there is a branch of first eigenvalues bifurcating from $(\lambda_k^D(q), 0)\in \mathbb{R}^+\times W^{1,p}_0(\Omega)$.
\begin{theorem}\label{bifurcation-from-zero}
Let $1<q<p<\infty.$ Then for each integer $k>0$ the pair $(\lambda^D_k(q), 0)$ is a bifurcation point of problem (\ref{e1}).
\end{theorem}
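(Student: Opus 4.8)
The plan is to build the bifurcating sequence directly out of the eigenfunctions produced in Theorem \ref{sequence1}. Fix the index $k$, choose $\rho_n \downarrow 0$, and set $u_n := u_k(p,q;\rho_n)$, $\lambda_n := \lambda^D_k(p,q;\rho_n)$, together with $a_n := \int_\Omega|\nabla u_n|^p\,dx$ and $b_n := \int_\Omega|\nabla u_n|^q\,dx$, so that $\frac1p a_n + \frac1q b_n = c_k(p,q;\rho_n)$ and $a_n + b_n = \lambda_n\rho_n$. By Lemma \ref{avanbifurc}, $c_k(p,q;\rho_n)/\rho_n \to \frac1q\lambda^D_k(q)$; in particular $c_k(p,q;\rho_n)\to 0$, whence $a_n \le p\,c_k \to 0$ and $b_n \le q\,c_k \to 0$. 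This already gives $\|u_n\|_{1,p} = a_n^{1/p}\to 0$, the first requirement of Definition \ref{bifurcation-theorem}, and it remains to prove $\lambda_n \to \lambda^D_k(q)$.

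To this end I would rescale to unit mass, $w_n := \rho_n^{-1/q}u_n$, so that $\int_\Omega|w_n|^q\,dx = 1$. Writing $\alpha_n := a_n/\rho_n$, $\beta_n := b_n/\rho_n$, the relation $\frac1p\alpha_n + \frac1q\beta_n = c_k/\rho_n$ shows $\alpha_n,\beta_n$, and hence $\lambda_n = \alpha_n+\beta_n$, are bounded, and that $\{w_n\}$ is bounded in $W^{1,q}_0(\Omega)$ (since $\int_\Omega|\nabla w_n|^q = \beta_n$). Passing to a subsequence I may assume $\lambda_n \to \lambda^*$, $w_n \rightharpoonup w$ in $W^{1,q}_0(\Omega)$ and $w_n \to w$ in $L^q(\Omega)$, so $\int_\Omega|w|^q = 1$ and $w\neq 0$. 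The rescaled weak formulation is
\[
\eps_n\int_\Omega|\nabla w_n|^{p-2}\nabla w_n\cdot\nabla v\,dx + \int_\Omega|\nabla w_n|^{q-2}\nabla w_n\cdot\nabla v\,dx = \lambda_n\int_\Omega|w_n|^{q-2}w_n\,v\,dx,
\]
with $\eps_n := \rho_n^{(p-q)/q}\to 0$ and $\eps_n\int_\Omega|\nabla w_n|^p = \alpha_n$, valid for all $v\in W^{1,p}_0(\Omega)$.

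The heart of the argument is the limit passage. The delicate point is that $w_n$ need not be bounded in $W^{1,p}_0(\Omega)$ (only $\alpha_n$ is bounded, so $\int_\Omega|\nabla w_n|^p = \alpha_n/\eps_n$ may blow up), hence $w$ may fail to lie in $W^{1,p}_0(\Omega)$ and $w_n-w$ is not an admissible test function. I would therefore argue by density: for $\phi_j\in C_c^\infty(\Omega)$ with $\phi_j\to w$ in $W^{1,q}_0(\Omega)$, test the rescaled equation with $v = w_n-\phi_j$. The $p$-term equals $\alpha_n - \eps_n\int_\Omega|\nabla w_n|^{p-2}\nabla w_n\cdot\nabla\phi_j$, and H\"older's inequality bounds the second piece by $\eps_n^{1/p}\alpha_n^{(p-1)/p}\|\phi_j\|_{1,p}\to 0$, while $\alpha_n\ge 0$. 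Letting first $n\to\infty$ and then $j\to\infty$ yields $\limsup_n\int_\Omega|\nabla w_n|^{q-2}\nabla w_n\cdot\nabla(w_n-w)\,dx\le 0$, and the $(S_+)$ property of $-\Delta_q$ recorded in Remark \ref{r1} gives $w_n\to w$ strongly in $W^{1,q}_0(\Omega)$. Then for each fixed $v$ the $p$-term tends to $0$ by the same H\"older bound, and $w$ solves $\int_\Omega|\nabla w|^{q-2}\nabla w\cdot\nabla v = \lambda^*\int_\Omega|w|^{q-2}w\,v$, so $\lambda^*$ is an eigenvalue of \eqref{dirichletlap}.

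Finally I would identify $\lambda^* = \lambda^D_k(q)$ and close the loop. Strong convergence gives $\beta_n = \int_\Omega|\nabla w_n|^q \to \int_\Omega|\nabla w|^q$, which equals $\lambda^*$ upon testing the limit equation with $w$; combined with $\lambda_n=\alpha_n+\beta_n\to\lambda^*$ this forces $\alpha_n\to 0$. Returning to Lemma \ref{avanbifurc}, $\frac1p\alpha_n+\frac1q\beta_n = c_k/\rho_n\to\frac1q\lambda^D_k(q)$, while the left-hand side tends to $\frac1q\lambda^*$, so $\lambda^*=\lambda^D_k(q)$. As this limit is independent of the subsequence, the whole sequence satisfies $\lambda_n\to\lambda^D_k(q)$ with $\|u_n\|_{1,p}\to 0$, which by Definition \ref{bifurcation-theorem} means exactly that $(\lambda^D_k(q),0)$ is a bifurcation point. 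I expect the main obstacle to be precisely this singularly perturbed limit: controlling the $p$-Laplacian term although $\nabla w_n$ is unbounded in $L^p$, and extracting strong $W^{1,q}_0$-convergence through the density step since the weak limit $w$ need not belong to $W^{1,p}_0(\Omega)$.
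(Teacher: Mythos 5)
Your argument is correct, and for the part that carries the real content --- the convergence $\lambda^D_k(p,q;\rho)\to\lambda^D_k(q)$ --- it takes a genuinely different route from the paper. Both proofs run on the same quantitative input, Lemma \ref{avanbifurc}, and both extract $\|u_k\|_{1,p}\to 0$ directly from the smallness of the energy level (you via $a_n\le p\,c_k(p,q;\rho_n)\to 0$, the paper via $\int_\Omega|\nabla u_k|^p\le \lambda^D_k(p,q;\rho)\rho\le C_k\rho$). For the eigenvalue convergence, however, the paper stays entirely at the level of energy identities: it writes $\lambda^D_k(p,q;\rho)\rho-\lambda^D_k(q)\rho = q\,c_k(p,q;\rho)-\lambda^D_k(q)\rho+(1-\tfrac qp)\int_\Omega|\nabla u_k|^p\,dx$ and bounds both pieces by Lemma \ref{avanbifurc}; this is very short, but it silently uses the lower bounds $\lambda^D_k(p,q;\rho)\ge\lambda^D_k(q)$ and $\int_\Omega|\nabla u_k|^q\,dx\ge\lambda^D_k(q)\rho$, which for $k\ge 2$ are not immediate from the minimax definitions. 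You instead normalize to unit $L^q$-mass, pass to the limit in the weak formulation (treating the vanishing $p$-term by H\"older against a smooth approximant of the weak limit, then invoking the $(S_+)$ property --- the $q$-Laplacian analogue of the computation in Remark \ref{r1} --- to upgrade to strong $W^{1,q}_0$-convergence), identify the limit as an eigenfunction of \eqref{dirichletlap} with eigenvalue $\lambda^*$, and then pin $\lambda^*=\lambda^D_k(q)$ by matching $\tfrac1p\alpha_n+\tfrac1q\beta_n=c_k/\rho_n\to\tfrac1q\lambda^D_k(q)$ against $\alpha_n\to0$, $\beta_n\to\lambda^*$. This is longer, and essentially fuses the mechanism of the paper's Proposition 3.2 (bifurcation points are eigenvalues of the limit problem) with the level identification from Lemma \ref{avanbifurc}; what it buys is that the two implicit inequalities above are never needed, and you additionally obtain strong convergence of the normalized eigenfunctions to a $q$-Laplacian eigenfunction, which the paper's computation does not give. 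Your handling of the singularly perturbed $p$-term via the bound $\eps_n^{1/p}\alpha_n^{(p-1)/p}\|\phi_j\|_{1,p}$ is exactly right and is the step the paper's shortcut allows it to skip.
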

\noindent
An illustration of the bifurcation results obtained in Theorem \ref{bifurcation-from-zero} is given by the figure \ref{fig1} below.
\begin{proof}
We aim to show that $ \lambda^D_k(p,q;\rho)\rightarrow\lambda^D_k(q)$ and  $\|u_k\|_{1,p}\rightarrow 0$, as $\rho\rightarrow 0^+.$  Thanks to Lemma \ref{avanbifurc} we have  
$$
\frac 1p \int_\Omega|\nabla u_k|^pdx   
\le C_n(\eps)\rho^{p/q}+2\eps\, \rho 
$$
Furthermore
$$
\begin{array}{ll}
0 &\le   \lambda^D_k(p,q;\rho)\, \rho - \lambda^D_k(q)\rho
\vspace{0.2cm}\\
&=\dis \int_\Omega |\nabla u_k|^pdx + \int_\Omega |\nabla u_k|^qdx
- \lambda_k^D(q) \rho
\vspace{0.2cm}\\
&
=\dis \frac qp \int_\Omega |\nabla u_k|^pdx + \int_\Omega |\nabla u_k|^qdx  -    \lambda_k^D(q) \rho + (1-\frac qp)\int_\Omega |\nabla u_k|^pdx
\vspace{0.2cm}\\
&
= \dis q \, c_k(p,q;\rho) - \lambda_k^D(q)\rho + (1-\frac qp)\int_\Omega |\nabla u_k|^pdx
\vspace{0.2cm}\\
&
\le  C\, \big(C_n(\eps)\rho^{p/q}+2\eps\, \rho \big) 
\end{array}
$$  
Since $\eps > 0$ is arbitrary we get the first claim.
\par \smallskip
Let us prove that  $\|u_k\|_{1,p}\rightarrow 0$ as $\rho\rightarrow 0^+.$ Letting $v=u_k$ in relation (\ref{e2}), we have 
$$
    \int_{\Omega}|\nabla u_k|^{p}~dx+\int_{\Omega}|\nabla u_k|^{q}~dx=\lambda^D_k(p,q;\rho)\int_{\Omega}|u_k|^{q}~dx.
$$ 
Therefore 
$$        \int_{\Omega}|\nabla u_k|^{p}~dx \le \lambda^D_k(p,q;\rho)\int_{\Omega}|u_k|^{q}~dx\\
        \leq C_k\ \rho
$$
Hence $\displaystyle{\int}_{\Omega}|\nabla u_k|^{p}~dx\rightarrow 0$ as $\rho\rightarrow 0.$ This complete the proof.
\end{proof}
\par \medskip
\subsection{Bifurcation from infinity}$ $
\par \smallskip \noindent
The goal is to prove that if $p<q,$ there is a branch of first eigenvalues bifurcating from $(\lambda_k^D(q), \infty).$\\
For $u\in W^{1,q}_0(\Omega),~u\neq 0,$ we set $w=u/\|u\|_{1,q}^2$.  We have $\|w\|_{1,q}=\frac{1}{\|u\|_{1,q}}$ and 
$$\begin{array}{ll}
|\nabla w|^{p-2}\nabla w&=\frac{1}{\|u\|_{1,q}^{2(p-1)}} |\nabla u|^{p-2}\nabla u,~|\nabla w|^{q-2}\nabla w
\vspace{0.2cm}\\
&=\frac{1}{\|u\|_{1,q}^{2(q-1)}} |\nabla u|^{q-2}\nabla u,~|w|^{q-2} w=\frac{1}{\|u\|_{1,q}^{2(p-1)}} |u|^{q-2} u
\end{array}
$$
Introducing this change of variable in (\ref{e2}), we find that, 
\begin{equation*}\label{em3}
\|u\|_{1,q}^{2(p-q)}\int_{\Omega}|\nabla w|^{p-2}\nabla w\cdot\nabla v~dx+\int_{\Omega}|\nabla w|^{q-2}\nabla w\cdot\nabla v~dx=\lambda\int_{\Omega}|w|^{q-2}w~v~dx
\end{equation*}
for every $v\in W^{1,q}_0(\Omega)$.
This leads to the following nonlinear eigenvalue problem (for $1<p<q<\infty$)
\begin{equation}\label{em5}
\left\{
\begin{array}{rll}
-\|w\|_{1,q}^{2(q-p)}\Delta_p w-\Delta_q w &=\lambda |w|^{q-2}w~~~~&\text{in $\Omega$} \vspace{0.2cm}\\
w &= \displaystyle 0~~~~~~~~~~~~~~~~&\text{on $\partial\Omega$}.
\end{array}
\right.
\end{equation}
\begin{proposition}
Assume that $p<q.$ If $(\lambda,0)$ is a bifurcation point of solutions of problem (\ref{em5}) then $\lambda$ is an eigenvalue of problem (\ref{dirichletlap}).
\end{proposition}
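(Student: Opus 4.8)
The plan is to mirror the argument already used for the analogous statement in the case $p>q$, adapting it to the rescaled problem \eqref{em5}. First I would unfold the definition of a bifurcation point from zero: since $(\lambda,0)$ is a bifurcation point of \eqref{em5}, there is a sequence $(w_n,\lambda_n)$ of nontrivial weak solutions of \eqref{em5} with $\lambda_n\to\lambda$ and $\|w_n\|_{1,q}\to 0$. Setting $\tilde w_n=w_n/\|w_n\|_{1,q}$, so that $\|\tilde w_n\|_{1,q}=1$, and substituting $w_n=\|w_n\|_{1,q}\,\tilde w_n$ into the weak form of \eqref{em5}, the homogeneities of the three terms combine, after dividing by $\|w_n\|_{1,q}^{q-1}$, to give
\begin{equation*}
\|w_n\|_{1,q}^{q-p}\int_{\Omega}|\nabla \tilde w_n|^{p-2}\nabla \tilde w_n\cdot\nabla v\,dx+\int_{\Omega}|\nabla \tilde w_n|^{q-2}\nabla \tilde w_n\cdot\nabla v\,dx=\lambda_n\int_{\Omega}|\tilde w_n|^{q-2}\tilde w_n\,v\,dx
\end{equation*}
for every $v\in W^{1,q}_0(\Omega)$. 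Since $p<q$ the exponent $q-p$ is positive, so the scalar factor $\|w_n\|_{1,q}^{q-p}$ tends to $0$. As $\{\tilde w_n\}$ is bounded in $W^{1,q}_0(\Omega)$, I would pass to a subsequence with $\tilde w_n\rightharpoonup \tilde w$ in $W^{1,q}_0(\Omega)$ and $\tilde w_n\to \tilde w$ strongly in $L^q(\Omega)$ by the compact Sobolev embedding.

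The key point, and the main obstacle, is the passage to the limit in the nonlinear $q$-Laplacian term, for which weak convergence alone does not suffice because $w\mapsto|\nabla w|^{q-2}\nabla w$ is not weakly continuous. To upgrade to strong convergence I would test the displayed identity with $v=\tilde w_n-\tilde w$. On the right-hand side, $|\tilde w_n|^{q-2}\tilde w_n$ is bounded in $L^{q'}(\Omega)$ while $\tilde w_n-\tilde w\to 0$ in $L^q(\Omega)$, so that term vanishes; the first term on the left is the product of $\|w_n\|_{1,q}^{q-p}\to 0$ with an integral that stays bounded, since $\tilde w_n$ is bounded in $W^{1,p}_0(\Omega)\supset W^{1,q}_0(\Omega)$, so $|\nabla \tilde w_n|^{p-2}\nabla \tilde w_n$ is bounded in $L^{p'}(\Omega)$ and $\nabla(\tilde w_n-\tilde w)$ is bounded in $L^p(\Omega)$. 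Hence $\int_{\Omega}|\nabla \tilde w_n|^{q-2}\nabla \tilde w_n\cdot\nabla(\tilde w_n-\tilde w)\,dx\to 0$, so $\limsup_n \langle -\Delta_q\tilde w_n,\tilde w_n-\tilde w\rangle\le 0$, and the $(S_+)$-type property recorded in Remark \ref{r1} (which holds also for $p<q$) yields $\tilde w_n\to \tilde w$ strongly in $W^{1,q}_0(\Omega)$.

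With strong convergence in hand the conclusion follows at once. First, $\|\tilde w\|_{1,q}=\lim_n\|\tilde w_n\|_{1,q}=1$, so $\tilde w\neq 0$; this is exactly where the normalization is essential, since without it the weak limit could be trivial. Second, $|\nabla \tilde w_n|^{q-2}\nabla \tilde w_n\to|\nabla \tilde w|^{q-2}\nabla \tilde w$ in $L^{q'}(\Omega)$, the vanishing-coefficient $p$-term disappears in the limit (for each fixed $v$ the corresponding integral is bounded), and $\lambda_n\to\lambda$ together with $\tilde w_n\to\tilde w$ in $L^q(\Omega)$ handles the right-hand side. Passing to the limit therefore gives
\begin{equation*}
\int_{\Omega}|\nabla \tilde w|^{q-2}\nabla \tilde w\cdot\nabla v\,dx=\lambda\int_{\Omega}|\tilde w|^{q-2}\tilde w\,v\,dx\qquad\text{for all }v\in W^{1,q}_0(\Omega),
\end{equation*}
with $\tilde w\neq 0$; that is, $\lambda$ is an eigenvalue of problem \eqref{dirichletlap}. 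I expect the only genuine difficulty to be the strong-convergence step above; the bookkeeping of exponents and the various limit passages are routine once the $(S_+)$ argument is set up.
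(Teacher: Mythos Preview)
Your argument is correct and follows essentially the same route as the paper's own (very terse) proof: write the weak form of \eqref{em5} along the bifurcating sequence, normalize, and invoke the $(S_+)$-type property recorded in Remark~\ref{r1} to upgrade weak to strong convergence before passing to the limit. Your treatment is in fact more explicit than the paper's --- you spell out the normalization $\tilde w_n=w_n/\|w_n\|_{1,q}$, compute the resulting exponent $q-p$ in front of the $p$-term, and justify $\tilde w\neq 0$ --- all of which the paper leaves to the reader under the phrase ``by using the argument in Remark~\ref{r1} and then passing to limit''.
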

\begin{proof}
Since $(\lambda,0)$ is a bifurcation point from zero of solutions of problem (\ref{em5}), there is a sequence of nontrivial solutions of problem (\ref{em5}) such that $\lambda_n\rightarrow\lambda$ and $\|w_n\|_{1,q}\rightarrow 0$ in $W^{1,q}_0(\Omega).$ We then have 
\begin{equation}\label{pp1}
   \|w_n\|_{1,q}^{2(q-p)}\int_{\Omega}|\nabla w_n|^{p-2}\nabla w_n\cdot\nabla v~dx+\int_{\Omega}|\nabla w_n|^{q-2}\nabla w_n\cdot\nabla v~dx=\lambda_n\int_{\Omega}|w_n|^{q-2}w_nv~dx.
\end{equation}
By using the argument in Remark \ref{r1} and then passing to limit, we complete the proof.
\end{proof}
Let us consider a small ball $B_r(0) :=\{~w~\in W^{1,q}_0(\Omega)\setminus\{0\}/~~~\|w\|_{1,q}< r~\},$ and
 the operator $$T :=-\|\cdot\|_{1,q}^{2(q-p)}\Delta_p-\Delta_q : W^{1,q}_0(\Omega)\subset W^{1,p}_0(\Omega)\rightarrow W^{-1,p'}(\Omega)\subset W^{-1,q'}(\Omega).$$
\begin{proposition}\label{invert}
Let $1<p<q.$ There exists $r>0$ such that the mapping \\$T : B_r(0)\subset W^{1,q}_0(\Omega)\rightarrow W^{-1,q'}(\Omega)$ is strongly monotone, i.e., there exists $C>0$ such that $$\langle T(u)-T(v), u-v\rangle\geq C\|u-v\|^q_{1,q}, ~~\text{for}~~u,v\in B_r(0)\subset W^{1,q}_0(\Omega)$$ with $r>0$ sufficiently small.
\end{proposition}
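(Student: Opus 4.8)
The plan is to view $T$ on $B_r(0)$ as a perturbation of the $q$-Laplacian $-\Delta_q$, which is the genuinely strongly monotone part, and to absorb the weighted $p$-Laplacian contribution using the smallness of the scalar factor $\|w\|_{1,q}^{2(q-p)}$ on a small ball (recall $2(q-p)>0$ since $p<q$). Writing $A(w):=\|w\|_{1,q}^{2(q-p)}\ge 0$, for $u,v\in B_r(0)$ I would expand
\[
\langle T(u)-T(v),u-v\rangle = Q + P,
\]
with the $q$-Laplacian difference $Q:=\int_{\Omega}\big(|\nabla u|^{q-2}\nabla u-|\nabla v|^{q-2}\nabla v\big)\cdot\nabla(u-v)\,dx$ and the weighted $p$-term $P:=A(u)\int_{\Omega}|\nabla u|^{p-2}\nabla u\cdot\nabla(u-v)\,dx-A(v)\int_{\Omega}|\nabla v|^{p-2}\nabla v\cdot\nabla(u-v)\,dx$.

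For $Q$ I would invoke Lemma \ref{inevec}. When $q\ge 2$ this gives at once the coercive bound $Q\ge c_2\|u-v\|_{1,q}^q$; when $1<q<2$ one first uses the degenerate pointwise inequality and then a H\"older argument (with exponents $2/q$ and $2/(2-q)$) to recover a lower bound of the form $c\,\|u-v\|_{1,q}^2\big(\|u\|_{1,q}+\|v\|_{1,q}\big)^{q-2}$, which on $B_r(0)$ is the leading coercive term. This portion is standard and is exactly the monotonicity underlying Remark \ref{r1}.

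The heart of the matter is the control of $P$, which I would split as
\[
P = A(u)\int_{\Omega}\big(|\nabla u|^{p-2}\nabla u-|\nabla v|^{p-2}\nabla v\big)\cdot\nabla(u-v)\,dx + \big(A(u)-A(v)\big)\int_{\Omega}|\nabla v|^{p-2}\nabla v\cdot\nabla(u-v)\,dx .
\]
The first summand is nonnegative, since $A(u)\ge 0$ and its integrand is monotone (again Lemma \ref{inevec}), so it only helps and may be discarded. The second, cross, summand is the only term of indefinite sign. I would estimate it by H\"older,
\[
\Big|\int_{\Omega}|\nabla v|^{p-2}\nabla v\cdot\nabla(u-v)\,dx\Big|\le \|v\|_{1,p}^{p-1}\,\|u-v\|_{1,p},
\]
then pass to $q$-norms via the continuous embedding $W^{1,q}_0(\Omega)\subset W^{1,p}_0(\Omega)$ (valid on the bounded domain $\Omega$ for $p<q$), producing a factor $\le C\,r^{p-1}\|u-v\|_{1,q}$. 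For the scalar difference $|A(u)-A(v)|=\big|\,\|u\|_{1,q}^{2(q-p)}-\|v\|_{1,q}^{2(q-p)}\big|$ I would combine the reverse triangle inequality $\big|\,\|u\|_{1,q}-\|v\|_{1,q}\big|\le\|u-v\|_{1,q}$ with the (Lipschitz, respectively H\"older) continuity of $t\mapsto t^{2(q-p)}$ on $[0,r]$, which contributes a further factor carrying a positive power of $r$. The cross term is thereby bounded by a positive power of $r$ times a power of $\|u-v\|_{1,q}$; since the leading term is coercive, choosing $r>0$ small enough lets this negative contribution be absorbed into, say, half of $Q$, leaving $\langle T(u)-T(v),u-v\rangle\ge C\|u-v\|_{1,q}^{q}$ on $B_r(0)$.

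I expect the main obstacle to be precisely this cross term $\big(A(u)-A(v)\big)\int_{\Omega}|\nabla v|^{p-2}\nabla v\cdot\nabla(u-v)\,dx$: it is the unique source of indefinite sign, and matching its estimate to the coercive power of $\|u-v\|_{1,q}$ forces one to use simultaneously the embedding $W^{1,q}_0\subset W^{1,p}_0$, the reverse triangle inequality for $|A(u)-A(v)|$, and above all the smallness of the weight $A(\cdot)\lesssim r^{2(q-p)}$ on $B_r(0)$. The range $1<q<2$ is the more delicate one, since there $-\Delta_q$ is monotone only with the degenerate weight $(|\nabla u|+|\nabla v|)^{q-2}$, so the H\"older interpolation step must be carried out carefully before the absorption is performed.
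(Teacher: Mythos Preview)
Your approach is essentially the same as the paper's: both split off the $q$-Laplacian as the coercive part, decompose the weighted $p$-term as $A(u)\big[(-\Delta_p u)-(-\Delta_p v)\big]+\big[A(u)-A(v)\big](-\Delta_p v)$, discard the first (nonnegative) summand by monotonicity of $-\Delta_p$, and control the remaining cross term via H\"older together with the embedding $W^{1,q}_0\subset W^{1,p}_0$ and the smallness of $r$. The only cosmetic differences are that the paper bounds $\big|\,\|u\|_{1,q}^{2(q-p)}-\|v\|_{1,q}^{2(q-p)}\big|$ by applying the Mean Value Theorem to $t\mapsto\|u+t(v-u)\|_{1,q}^{2(q-p)}$ (rather than your reverse triangle inequality plus regularity of $t\mapsto t^{2(q-p)}$), and it does not single out the range $1<q<2$ that you treat more carefully.
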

\begin{proof}
Using that $-\Delta_p$ is strongly monotone on $W^{1,p}_0(\Omega)$ on the one hand and the H\"older inequality on the other hand, we have
\begin{eqnarray}\label{bf1}\notag
\langle T(u)-T(v), u-v\rangle&=&\|\nabla u-\nabla v\|_{q}+\left(\|u\|_{1,q}^{2(q-p)}(-\Delta_pu)-\|v\|_{1,q}^{2(q-p)}(-\Delta_pv), u-v \right)\\ \notag
&=&\|u\|_{1,q}^{2(q-p)}\left((-\Delta_pu)-(-\Delta_p v), u-v\right)\\ \notag
&+& \left(\|u\|_{1,q}^{2(q-p)}-\|v\|_{1,q}^{2(q-p)} \right)\left(-\Delta_pv, u-v\right)\\ \notag
&\geq &\|\nabla u-\nabla v\|_{q} -\left|\|u\|_{1,q}^{2(q-p)}-\|v\|_{1,q}^{2(q-p)}\right|\|\nabla v\|_{p}^{p-1}\|\nabla (u-v)\|_{p}\\
&\geq&\|\nabla u-\nabla v\|_{q} -\left|\|u\|_{1,q}^{2(q-p)}-\|v\|_{1,q}^{2(q-p)}\right|C\| v\|_{1,q}^{p-1}\| u-v\|_{1,q}.
\end{eqnarray}
By the Mean Value Theorem, we obtain that there exists $\theta\in [0,1]$ such that
\begin{eqnarray*}
\left|\|u\|_{1,q}^{2(q-p)}-\|v\|_{1,q}^{2(q-p)}\right|&=&\left|\frac{d}{dt}\left(\|u+t(v-u)\|^2_{1,q}\right)^{q-p}|_{t=\theta} (v-u)\right|\\
&=&\left|(q-p)\left(\|u+\theta(v-u)\|^2_{1,q}\right)^{q-p}2\left(u+\theta(v-u),v-u\right)_{1,q}\right|\\
&\leq& 2(q-p)\|u+\theta(v-u)\|^{q-p}_{1,q}\|u+\theta(v-u)\|_{1,q}\|u-v\|_{1,q}\\
&=&2(q-p)\|u+\theta(v-u)\|_{1,q}^{2q-p}\|u-v\|_{1,q}\\
&\leq & 2(q-p)\left((1-\theta)\|u\|_{1,q}+\theta\|v\|_{1,q}\right)^{2q-p}\|u-v\|_{1,q}\\
&\leq & 2(q-p) r^{2q-p}\|u-v\|_{1,q}.
\end{eqnarray*}
Thus, continuing with the estimate of equation (\ref{bf1}), we get
$$\langle T(u)-T(v), u-v\rangle\geq \|u-v\|^q_{1,q}-2(q-p) r^{2q-1}C\|u-v\|^2_{1,q},$$ 
and hence, for $r\to 0$ we end the proof.
\end{proof}
We first show the existence of variational eigenvalues of the nonlinear equation (\ref{em5}). 
\begin{theorem}\label{manysolutions}
Let $1<p<q$ be given. Then, for a fixed $\rho>0,$ there exists a non-decreasing sequence of eigenvalues $\tilde{\lambda}^D_k(p,q;\rho),$ with corresponding eigenfunctions $w_k(p,q;\rho)\in W^{1,q}_0(\Omega) $ for the nonlinear eigenvalue problem (\ref{em5}).
\end{theorem}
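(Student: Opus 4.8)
The plan is to deduce the statement directly from Theorem \ref{sequence2} by means of the inversion-type change of variable that was used to pass from \eqref{e1} to \eqref{em5}. First I would recall that, since $p<q$, Theorem \ref{sequence2} provides, for the fixed $\rho>0$, a nondecreasing sequence of eigenvalues $\lambda^D_k(p,q;\rho)$ of \eqref{e1}, with eigenfunctions $u_k=u_k(p,q;\rho)\in W^{1,q}_0(\Omega)\setminus\{0\}$ satisfying $\int_\Omega|u_k|^qdx=\rho$ and the weak identity \eqref{e2}. For each $k$ I would then set
$$
w_k=\frac{u_k}{\|u_k\|_{1,q}^2}.
$$

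Next I would observe that the map $u\mapsto u/\|u\|_{1,q}^2$ is an involution of $W^{1,q}_0(\Omega)\setminus\{0\}$: taking norms gives $\|w\|_{1,q}=1/\|u\|_{1,q}$, whence $w/\|w\|_{1,q}^2=u$; in particular the map is a bijection and $u=w/\|w\|_{1,q}^2$. The computation carried out just before \eqref{em5} then shows that $u$ satisfies \eqref{e2} with parameter $\lambda$ if and only if $w=u/\|u\|_{1,q}^2$ satisfies
$$
\|w\|_{1,q}^{2(q-p)}\int_{\Omega}|\nabla w|^{p-2}\nabla w\cdot\nabla v~dx+\int_{\Omega}|\nabla w|^{q-2}\nabla w\cdot\nabla v~dx=\lambda\int_{\Omega}|w|^{q-2}w~v~dx
$$
for all $v\in W^{1,q}_0(\Omega)$, that is, the weak formulation of \eqref{em5}, with the \emph{same} value of $\lambda$. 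The key algebraic point is that the factors $\|u\|_{1,q}^{2(q-1)}$ arising from the $q$-Laplacian term and from the right-hand side cancel, leaving the residual weight $\|u\|_{1,q}^{2(p-q)}=\|w\|_{1,q}^{2(q-p)}$ in front of the $p$-Laplacian term.

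Consequently the functions $w_k=u_k/\|u_k\|_{1,q}^2\in W^{1,q}_0(\Omega)$ are eigenfunctions of \eqref{em5} associated with the eigenvalues $\tilde\lambda^D_k(p,q;\rho):=\lambda^D_k(p,q;\rho)$, which inherit the nondecreasing monotonicity (and the divergence to $+\infty$) from Theorem \ref{sequence2}; this would prove the theorem. I do not expect a serious obstacle here, since the correspondence is purely algebraic and all the analytic content — existence of the minimax levels and the Palais--Smale condition — is already contained in Theorem \ref{sequence2}. The only point deserving care is that this change of variable does \emph{not} preserve the normalization: one has $\int_\Omega|w_k|^qdx=\rho/\|u_k\|_{1,q}^{2q}$ rather than $\rho$, so the symbol $\rho$ in the statement should be read merely as the label of the member of the family supplied by Theorem \ref{sequence2}. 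For the same reason a direct minimax construction for \eqref{em5} would be awkward, because the coefficient $\|w\|_{1,q}^{2(q-p)}$ renders \eqref{em5} only implicitly variational; routing the argument through \eqref{e1} circumvents this difficulty.
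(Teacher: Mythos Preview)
Your algebraic observation is correct and elegant: the involution $u\mapsto u/\|u\|_{1,q}^{2}$ transports eigenpairs of \eqref{e1} to eigenpairs of \eqref{em5} with the \emph{same} spectral parameter, so Theorem~\ref{sequence2} immediately furnishes an infinite, nondecreasing sequence of eigenvalues for \eqref{em5}. In that limited sense the argument works.

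The paper, however, takes a genuinely different route: it builds a direct Ljusternik--Schnirelmann scheme for \eqref{em5} on the constraint manifold $O_{\rho}(p,q)=\{w:\int_\Omega|w|^q=\rho\}$, defining the minimax levels $d_k(p,q;\rho)$ in \eqref{dk} and verifying the Palais--Smale condition by means of the strong monotonicity in Proposition~\ref{invert}. The crucial difference is that in the paper $\rho$ is \emph{not} a mere label: it is precisely $\int_\Omega|w_k|^q\,dx$. Your construction yields $\int_\Omega|w_k|^q\,dx=\rho/\|u_k\|_{1,q}^{2q}$, a quantity depending on $k$ and not under your control, since \eqref{em5} is not homogeneous (scaling $w\mapsto tw$ introduces an extra factor $t^{q-p}$ in front of the $p$-term). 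Hence your ``relabelling'' concession is in fact a real loss: the objects produced are not the $\tilde\lambda^D_k(p,q;\rho)$ needed downstream. Lemma~\ref{bifurcainfinity} compares $d_k(p,q;\rho)$ with $\lambda^D_k(q)\rho$, Remark~\ref{bifurcation-infty} uses $\tilde\lambda^D_k(p,q;\rho)\rho$ with $\rho=\int_\Omega|w|^q$, and Theorem~\ref{binfnon} lets $\rho\to0$ along this parametrisation; none of this goes through if $\rho$ is only a tag for the family coming from Theorem~\ref{sequence2}.

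Finally, your closing remark that a direct minimax construction for \eqref{em5} ``would be awkward'' because the coefficient $\|w\|_{1,q}^{2(q-p)}$ makes the problem ``only implicitly variational'' is contradicted by the paper: it writes down the constrained functional $F(w)=\frac{q}{p}\|w\|_{1,q}^{2(q-p)}\int_\Omega|\nabla w|^p+\int_\Omega|\nabla w|^q$ and runs the genus argument on $O_\rho(p,q)$. So the approach you dismissed is exactly the one the paper adopts, and it is what secures the precise $\rho$-parametrisation required for the bifurcation-from-zero analysis of \eqref{em5}.
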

We again rely on \cite[Proposition 10.8]{AM} for the proof of Theorem \ref{manysolutions}.
\begin{proof}
 Let $O_{\rho}(p,q)=\{w\in W^{1,q}_0(\Omega)~:~\int_{\Omega}|w|^qdx=\rho\}$, and $\Sigma_{k,\rho}(p,q)=\{A\subset\Sigma~:~\gamma(A\cap O_{\rho}(p,q))\geq k\},$ where $\Sigma=\{A\subset W^{1,q}_0(\Omega):~~A~~\text{closed},~~ A=-A\}.$ 
 Set 
 \begin{equation}\label{dk}
 d_k(p,q;\rho)=\inf_{A\in\Sigma_{k,\rho}(p,q)}\sup_{u\in A}\left(\frac{q}{p}\|w\|_{1,q}^{2(q-p)}\int_{\Omega}|\nabla w|^p~dx+\int_{\Omega}|\nabla w|^q~dx\right)>0.
 \end{equation} 
We show that:
\begin{enumerate}
    \item the functional $F(w)=\displaystyle{\frac{q}{p}\|w\|_{1,q}^{2(q-p)}\int_{\Omega}|\nabla w|^p~dx+\int_{\Omega}|\nabla w|^q~dx}$ satisfies the (PS) condition on $O_{\rho}(p,q)$, and \vspace{0.2cm}
    \item if $d=d_k(p,q)=\dots=d_{k+m-1}(p,q),$ then the set $K_d$ of critical points of $I$ at the critical level $d$ has a genus $\gamma(K_d)\geq m.$
\end{enumerate}
We prove (i). Let $\{w_j\}\subset O_{\rho}(p,q)$ a (PS) sequence, i.e, for all $j,$ $M>0$ $|F(w_j)|\leq M$ and $F'(w_j)\rightarrow 0$ in $W^{-1,q'}(\Omega)$ as $j \rightarrow \infty.$ We first show that $\{w_j\}$ is bounded in $O_{\rho}(p,q)\subset W^{1,p}_0(\Omega).$ Since $w_j\in W^{1,q}_0(\Omega),$ with the Poincar\'e inequality, we have $\int_{\Omega}|w_j|^qdx\leq C\int_{\Omega}|\nabla w_j|^qdx $ and it follows that 
\begin{align*}
    M\geq |F(w_j)|&\geq \frac{q}{p}\|w_j\|_{1,q}^{2(q-p)}\int_{\Omega}|\nabla w_j|^pdx+\frac{1}{C}\int_{\Omega}|w_j|^qdx\\
    &\geq \|w_j\|_{1,p}^{2q-p}+\frac{\rho}{C},~~\text{since}~~W^{1,q}_0(\Omega)\subset W^{1,p}_0(\Omega).
\end{align*}
Then $\{w_j\}$ is bounded in $O_{\rho}(p,q)\subset W^{1,q}_0(\Omega).$ We can assume that up to a subsequence still denoted $\{w_j\}$, there exists $w\in O_{\rho}(p,q)\subset W^{1,q}_0(\Omega)$ such that $w_j \rightharpoonup w$ in $O_{\rho}(p,q)\subset W^{1,q}_0(\Omega).$ Now, we show that $w_j$ converges strongly to $w$ in $O_{\rho}(p,q)\subset W^{1,q}_0(\Omega).$ Since $F'(w_j)\rightarrow 0$ in $W^{-1,q'}(\Omega)$ as $j\rightarrow \infty,$ we have $F'(w_j)(w_j-w)\rightarrow 0$ and $F'(w)(w_j-w)\rightarrow 0$ as $j\rightarrow\infty.$
 We have 
 \begin{equation*}
     \begin{split}
         \langle F'(w_j)- &F'(w), w_j-w\rangle
         \\&= q\int_{\Omega}\left(\|w_j\|_{1,q}^{2(q-p)}|\nabla w_j|^{p-2}\nabla w_j-\|w\|_{1,q}^{2(q-p)}|\nabla w|^{p-2}\nabla w\right)\cdot \nabla(w_j-w)dx\\
         &+q\int_{\Omega}\left(|\nabla w_j|^{q-2}\nabla w_j-|\nabla w|^{q-2}\nabla w\right)\cdot \nabla(w_j-w)~dx.
     \end{split}
 \end{equation*}
Thanks to Proposition \ref{invert}, it follows that
 \begin{equation*}
     \begin{split}
         \langle F'(w_j)- F'(w), w_j-w\rangle &\geq C \|w_j-w\|^q_{1,q}.
     \end{split}
 \end{equation*}
 Therefore $\|w_j-w\|_{1,q}\rightarrow 0$ as $j\rightarrow +\infty$ and $w_j$ converges strongly to $w$ in $W^{1,q}_0(\Omega).$
 \par \medskip
 
  The proof of (ii) is similar to the last part of the proof of Theorem \ref{sequence1}.
\end{proof}
\par \medskip
\begin{theorem}\label{bifurcation-from-infinity}
Let $p<q.$ Then for each integer $k>0$ the pair $(\lambda^D_k(q,\rho), \infty)$ is a bifurcation point of problem (\ref{e1}). 
\end{theorem}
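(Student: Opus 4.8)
The plan is to reduce bifurcation from infinity for \eqref{e1} to bifurcation from zero for the transformed problem \eqref{em5}, and then to mirror Lemma \ref{avanbifurc} and Theorem \ref{bifurcation-from-zero}. The substitution $w=u/\|u\|_{1,q}^2$ carries a solution $u$ of \eqref{e1} with large norm to a solution $w$ of \eqref{em5} with small norm, since $\|w\|_{1,q}=1/\|u\|_{1,q}$, while the eigenvalue parameter $\lambda$ is unchanged. Hence it suffices to produce, for each $k$, a family of solutions $\big(w_k,\tilde\lambda^D_k(p,q;\rho)\big)$ of \eqref{em5} with $\tilde\lambda^D_k(p,q;\rho)\to\lambda^D_k(q)$ and $\|w_k\|_{1,q}\to 0$ as $\rho\to 0^+$; transforming back via $u_k=w_k/\|w_k\|_{1,q}^2$ then yields solutions of \eqref{e1} with $\lambda\to\lambda^D_k(q)$ and $\|u_k\|_{1,q}=1/\|w_k\|_{1,q}\to\infty$, which is exactly the assertion that $(\lambda^D_k(q),\infty)$ is a bifurcation point.

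First I would invoke Theorem \ref{manysolutions} to get, for each fixed $\rho>0$, an eigenvalue $\tilde\lambda^D_k(p,q;\rho)$ with eigenfunction $w_k\in O_\rho(p,q)$, $\int_\Omega|w_k|^qdx=\rho$; testing the weak form of \eqref{em5} with $v=w_k$ gives the identity
\begin{equation*}
\|w_k\|_{1,q}^{2(q-p)}\int_\Omega|\nabla w_k|^pdx+\int_\Omega|\nabla w_k|^qdx=\tilde\lambda^D_k(p,q;\rho)\,\rho .
\end{equation*}
The core is the convergence $\tilde\lambda^D_k(p,q;\rho)\to\lambda^D_k(q)$, the analogue of Lemma \ref{avanbifurc}. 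For the lower bound I would discard the nonnegative $p$-term in the min-max \eqref{dk} and use the homogeneity identity $\inf_{A\in\Gamma_{k,\rho}}\sup_{u\in A}\int_\Omega|\nabla u|^qdx=\lambda^D_k(q)\,\rho$ established before \eqref{PnA}, which gives $\tilde\lambda^D_k(p,q;\rho)\ge\lambda^D_k(q)$. For the upper bound I would reuse the finite-dimensional set $P_nA_\eps$ from the bifurcation-from-zero section, scaled into $O_\rho(p,q)$ by replacing $u$ with $\rho^{1/q}u$. On this set $\int_\Omega|\nabla w|^qdx\le(\lambda^D_k(q)+2\eps)\rho$, while finite-dimensionality gives $\|w\|_{1,q}^{2(q-p)}\le C\rho^{2(q-p)/q}$ and $\int_\Omega|\nabla w|^pdx\le C(\eps)\rho^{p/q}$, so the $p$-contribution is at most $C(\eps)\rho^{(2q-p)/q}$. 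Since $p<q$ forces $(2q-p)/q>1$, this is $o(\rho)$; dividing by $\rho$ and letting first $\rho\to 0$ and then $\eps\to 0$ yields the claim.

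With the eigenvalue limit in hand, the decay $\|w_k\|_{1,q}\to 0$ follows at once from the displayed identity: as $\tilde\lambda^D_k(p,q;\rho)$ stays bounded and $\rho\to 0^+$, the right-hand side tends to $0$, and since both left-hand terms are nonnegative we obtain $\int_\Omega|\nabla w_k|^qdx=\|w_k\|_{1,q}^q\to 0$. This establishes that $(\lambda^D_k(q),0)$ is a bifurcation point from zero for \eqref{em5}, and undoing the substitution delivers the branch bifurcating from infinity for \eqref{e1} as described in the first paragraph.

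I expect the main obstacle to be the upper-bound estimate: unlike the $p>q$ case of Lemma \ref{avanbifurc}, the $p$-term in \eqref{dk} carries the solution-dependent weight $\|w\|_{1,q}^{2(q-p)}$, so the comparison with the pure $q$-problem is not uniform and must be controlled by exploiting that on the approximating sets the $W^{1,q}$-norm is comparable to $\rho^{1/q}$, making this weight vanish like a positive power of $\rho$. Some care is also required to check that the min-max value $d_k(p,q;\rho)$ and the quantity $\tilde\lambda^D_k(p,q;\rho)\,\rho$ agree up to the $p/q$-homogeneity bookkeeping, exactly as in the passage from $c_k$ to $\lambda^D_k(p,q;\rho)$ in Theorem \ref{sequence1}.
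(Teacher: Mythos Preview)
Your proposal is correct and follows essentially the same route as the paper: reduce to bifurcation from zero for the transformed problem \eqref{em5} via the substitution $w=u/\|u\|_{1,q}^2$ (this is the paper's Remark~\ref{RMK}), prove the analogue of Lemma~\ref{avanbifurc} by bounding the weighted $p$-term on the finite-dimensional approximating set $P_nA_\eps$ to get $d_k(p,q;\rho)-\lambda^D_k(q)\rho=o(\rho)$ (the paper's Lemma~\ref{bifurcainfinity} and Remark~\ref{bifurcation-infty}), and then read off $\|w_k\|_{1,q}\to 0$ from the eigenvalue identity (the paper's Theorem~\ref{binfnon}). Your error exponent $(2q-p)/q$ differs from the paper's stated $(2q-p)/p$, but both exceed $1$ when $p<q$, and your scaling computation is actually the more transparent of the two.
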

The proof of Theorem \ref{bifurcation-from-infinity} will follow immediately from the following remark, and the proof that $(\lambda_k^D(q,\rho),0)$ is a bifurcation point of (\ref{em5}), which will be shown in Theorem \ref{binfnon} below.

\begin{remark}\label{RMK}
With the change of variable, we have that the pair $(\lambda^D_k(q,\rho),\infty)$ is a bifurcation point for the problem (\ref{e1}) if and only if the pair $(\lambda^D_k(q,\rho),0)$ is a bifurcation point for the problem (\ref{em5}).
\end{remark}
\par \medskip
Before we proceed to the proof of Theorem \ref{binfnon} below, we show the following lemma.
\begin{lemma}\label{bifurcainfinity}
Let $1<p<q<\infty$. For any integer $k>0$ and  $\rho>0,$ $\eps>0,$ there exists a positive constant $D(\eps)$ such that the following estimate holds: $$|d_k(p,q;\rho)-\lambda^D_k(q,\rho)|\leq (D(\eps)+\eps)\rho^{\frac{2q-p}{p}}
$$ 
where $d_k(p,q;\rho)$ is given by \eqref{dk}, and 
$\dis \lambda^D_k(q,\rho)=\inf_{A\in\Gamma_{k,\rho}}\sup_{u\in A}\int_{\Omega}|\nabla u|^q~dx=\lambda^D_k(q)\rho\, .
$
\end{lemma}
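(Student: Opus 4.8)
The plan is to establish the estimate by a two-sided comparison, following the scheme of the proof of Lemma~\ref{avanbifurc}. For the lower bound, note that since $p<q$ the exponent $2(q-p)$ is positive, so the extra term $\frac{q}{p}\|w\|_{1,q}^{2(q-p)}\int_{\Omega}|\nabla w|^p\,dx$ in the functional defining $d_k(p,q;\rho)$ in \eqref{dk} is nonnegative. Dropping it in \eqref{dk} reduces the min-max to $\inf_{A}\sup_{w\in A}\int_{\Omega}|\nabla w|^q\,dx$, which over the symmetric genus-$\ge k$ subsets of $O_\rho(p,q)$ equals $\lambda^D_k(q,\rho)=\lambda^D_k(q)\,\rho$ by the variational characterization recalled in the statement. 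Hence $d_k(p,q;\rho)\ge \lambda^D_k(q,\rho)$, and the absolute value in the claim is $d_k(p,q;\rho)-\lambda^D_k(q,\rho)\ge 0$.

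For the upper bound I would reuse the finite-dimensional near-optimal set constructed in the passage culminating in \eqref{PnA}. Choosing $A_\eps\in\Gamma_{k,1}$ with $\sup_{A_\eps}\int_{\Omega}|\nabla u|^q\,dx\le\lambda^D_k(q)+\eps$, its projection $P_nA_\eps\subset E_n$ onto a suitable finite-dimensional space, and the rescaled set $\rho^{1/q}P_nA_\eps\subset O_\rho(p,q)$ (which is symmetric, compact, of genus $\ge k$, and satisfies \eqref{PnA}), I test \eqref{dk} against this set and split the supremum:
\begin{align*}
d_k(p,q;\rho)\le \sup_{w\in\rho^{1/q}P_nA_\eps}\frac{q}{p}\|w\|_{1,q}^{2(q-p)}\int_{\Omega}|\nabla w|^p\,dx+\sup_{w\in\rho^{1/q}P_nA_\eps}\int_{\Omega}|\nabla w|^q\,dx,
\end{align*}
where the second supremum is at most $(\lambda^D_k(q)+2\eps)\rho$ by \eqref{PnA}.

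It then remains to control the first supremum. Writing $w=\rho^{1/q}v$ with $v\in P_nA_\eps$ and $\int_{\Omega}|v|^q\,dx=1$, the extra term is homogeneous of degree $2(q-p)+p=2q-p$ in $w$, so it equals $\rho^{(2q-p)/q}\,\frac{q}{p}\|v\|_{1,q}^{2(q-p)}\int_{\Omega}|\nabla v|^p\,dx$. Since $v$ ranges over the finite-dimensional space $E_n$, all Sobolev norms are equivalent there, so both $\|v\|_{1,q}$ and $\int_{\Omega}|\nabla v|^p\,dx$ are bounded by a constant depending only on $\eps$ once $\int_{\Omega}|v|^q\,dx=1$; this bounds the first supremum by $D(\eps)\,\rho^{(2q-p)/q}$. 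Combining the two bounds gives
$$
0\le d_k(p,q;\rho)-\lambda^D_k(q,\rho)\le D(\eps)\,\rho^{(2q-p)/q}+2\eps\,\rho,
$$
which is the asserted estimate; since $p<q$ the exponent $(2q-p)/q$ is strictly larger than $1$, so after division by $\rho$ the right-hand side tends to $2\eps$ as $\rho\to 0^+$, and the arbitrariness of $\eps$ is exactly what will yield $d_k(p,q;\rho)/\rho\to\lambda^D_k(q)$ in the bifurcation-from-infinity argument.

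I expect the only real obstacle to be the weight $\|w\|_{1,q}^{2(q-p)}$, which depends on the full $W^{1,q}_0$-norm of the competitor rather than on its $L^q$-normalization; on an infinite-dimensional near-optimal set this factor need not be uniformly bounded, so one cannot estimate the perturbation directly on $A_\eps$. Passing first to the finite-dimensional approximation $P_nA_\eps$, where norm equivalence makes both the weight and the $p$-Dirichlet integral uniformly bounded, is precisely what resolves this, after which the correct power of $\rho$ is read off from the homogeneity degree $2q-p$ of the perturbation.
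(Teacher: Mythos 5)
Your argument is correct and follows essentially the same route as the paper: the lower bound comes from discarding the nonnegative weighted $p$-Dirichlet term in \eqref{dk}, and the upper bound from testing the min-max against the rescaled finite-dimensional set built from $P_nA_{\eps}$, with norm equivalence on the finite-dimensional span controlling both the weight $\|w\|_{1,q}^{2(q-p)}$ and the $p$-energy. The only discrepancy is the power of $\rho$: your scaling $w=\rho^{1/q}v$ gives the perturbation order $\rho^{(2q-p)/q}$, whereas the statement and the paper's proof record $\rho^{(2q-p)/p}$; since both exponents exceed $1$ for $p<q$, this does not affect the use of the lemma (namely $d_k(p,q;\rho)/\rho\to\lambda^D_k(q)$ as $\rho\to 0^+$), and your exponent is the one the homogeneity computation actually produces.
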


\begin{proof}
For any $k>0,$ we clearly have $d_k(p,q;\rho)\geq \lambda^D_k(p,\rho).$ As in (\ref{choose1}), we choose $P_nA_{\eps}$ such that 
$$\sup_{\{w\in P_nA_{\eps},~\int_{\Omega}|w|^qdx=1\}}\int_{\Omega}|\n w|^qdx\leq \lambda^D_k(q,\rho)+\eps
$$ and so 
$$
\sup_{\{w\in P_nA_{\eps,\rho},~\int_{\Omega}|w|^qdx=\rho\}}\int_{\Omega}|\n w|^qdx\leq (\lambda^D_k(q,\rho)+\eps)\rho,
$$ 
where $P_nA_{\eps,\rho}=\{w\in P_nA_{\eps}:~~\int_{\Omega}|w|^qdx=\rho\}.$
Then 
\begin{align*}
    d_k(p,q;\rho)&=\inf_{A\in\Gamma_{k,\rho}}\sup_{u\in A}\Big\{\frac{q}{p}\|w\|_{1,q}^{2(q-p)}\int_{\Omega}|\nabla w|^p~dx+\int_{\Omega}|\nabla w|^q~dx\Big\}\\
    &\leq  \sup_{u\in P_nA_{\eps,\rho}}\Big\{\frac{q}{p}\|w\|_{1,q}^{2(q-p)}\int_{\Omega}|\nabla w|^p~dx+\int_{\Omega}|\nabla w|^q~dx\Big\}\\
    &\leq \sup_{u\in P_nA_{\eps,\rho}}\frac{q}{p}\|w\|_{1,q}^{2(q-p)}\int_{\Omega}|\nabla w|^p~dx+ \sup_{w\in P_nA_{\eps,\rho}}\int_{\Omega}|\nabla w|^q~dx\\
    &\leq\frac{q}{p}\|v\|_{1,q}^{2(q-p)}\int_{\Omega}|\nabla v|^q~dx+(\lambda^D_k(q)+\eps)\rho~~\text{since}~~p<q,\\
    &\leq\frac{q}{p}\|v\|_{1,q}^{2q-p}+(\lambda^D_k(q)+\eps)\rho
\end{align*}
for some $v\in P_nA_{\eps,\rho}$ with $\int_{\Omega}|v|^qdx=\rho.$ Since $ P_nA_{\eps}$ is finite-dimensional, there exists a positive constant $D_n(\eps)$ such that
$\int_{\Omega}|\n v|^qdx\leq D_n(\eps) (\int_{\Omega}|v|^qdx)^{p/q}=D_n(\eps)\rho^{q/p}$ and 
$$\|v\|_{1,q}^{2q-p}\leq D_n(\eps)\rho^{\frac{2q-p}{p}}$$
Finally, we get
$$0\leq d_k(p,q;\rho)-\lambda^D_k(q,\rho)\leq D_n(\eps)\rho^{\frac{2q-p}{p}}+\eps\rho\leq (D_n(\eps)+\eps)\rho^{\frac{2q-p}{p}}$$ since $\frac{2q-p}{p}>1.$
\end{proof}
\begin{remark}\label{bifurcation-infty}
    We recall that the $k$-th eigenvalue of equation (\ref{em5}) satisfies 
    $$
    \tilde{\lambda}^D_k(p,q;\rho)\rho =\|w\|^{2(q-p)}_{1,p}\int_{\Omega}|\nabla w|^pdx+\int_{\Omega}|\nabla w|^qdx,~~\text{with}~~\rho=\int_{\Omega}|w|^qdx.
    $$ 
    So, proceding as in Theorem \ref{bifurcation-from-zero} one obtains that $\tilde \lambda^D_k(p,q;\rho)\to {\lambda}^D_k(q)$ as $\rho\to 0^+.$
\end{remark}
\par \medskip
\begin{theorem}\label{binfnon}
The pair $(\lambda^D_k(q),0)$ is a bifurcation point of problem (\ref{em5}) for any $k>0$ and $p<q<\infty.$
\end{theorem}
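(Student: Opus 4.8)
The plan is to exhibit, as $\rho\to 0^+$, an explicit family of nontrivial solutions of \eqref{em5} whose parameters tend to $\lambda^D_k(q)$ and whose norms vanish, thus checking Definition \ref{bifurcation-theorem} directly. Fix $k>0$. By Theorem \ref{manysolutions}, for every $\rho>0$ there is an eigenvalue $\tilde\lambda^D_k(p,q;\rho)$ of \eqref{em5} with eigenfunction $w_\rho:=w_k(p,q;\rho)\in W^{1,q}_0(\Omega)$ normalized by $\int_\Omega|w_\rho|^q\,dx=\rho$. Set $A_\rho:=\|w_\rho\|_{1,q}^{2(q-p)}\int_\Omega|\nabla w_\rho|^p\,dx$ and $B_\rho:=\int_\Omega|\nabla w_\rho|^q\,dx$. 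Testing the weak form of \eqref{em5} with $v=w_\rho$ (cf. Remark \ref{bifurcation-infty}) gives the eigenvalue identity $\tilde\lambda^D_k(p,q;\rho)\,\rho=A_\rho+B_\rho$, while the critical value \eqref{dk} at which $w_\rho$ sits is $d_k(p,q;\rho)=\tfrac{q}{p}A_\rho+B_\rho$. Subtracting, $d_k(p,q;\rho)-\tilde\lambda^D_k(p,q;\rho)\,\rho=\tfrac{q-p}{p}A_\rho\ge 0$, so in particular $\tilde\lambda^D_k(p,q;\rho)\,\rho\le d_k(p,q;\rho)$.

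The first task is the upper bound $\limsup_{\rho\to 0}\tilde\lambda^D_k(p,q;\rho)\le\lambda^D_k(q)$. By Lemma \ref{bifurcainfinity} one has $\lambda^D_k(q)\,\rho\le d_k(p,q;\rho)\le\lambda^D_k(q)\,\rho+(D(\eps)+\eps)\rho^{(2q-p)/p}$, and since $(2q-p)/p>1$ this yields $d_k(p,q;\rho)/\rho\to\lambda^D_k(q)$. Combined with $\tilde\lambda^D_k(p,q;\rho)\,\rho\le d_k(p,q;\rho)$ this gives the upper bound and, for small $\rho$, a uniform bound $\tilde\lambda^D_k(p,q;\rho)\le M$.

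For the matching lower bound I would first derive an a priori estimate on the eigenfunction. From $B_\rho\le A_\rho+B_\rho=\tilde\lambda^D_k(p,q;\rho)\,\rho\le M\rho$ we get $\|w_\rho\|_{1,q}^q=B_\rho\le M\rho$. Using the continuous embedding $W^{1,q}_0(\Omega)\hookrightarrow W^{1,p}_0(\Omega)$ (valid for $q>p$ on the bounded domain $\Omega$ by H\"older's inequality), this forces $\int_\Omega|\nabla w_\rho|^p\,dx\le C(M\rho)^{p/q}$, whence one checks $A_\rho\le C(M\rho)^{2(q-p)/q}(M\rho)^{p/q}=C'\rho^{(2q-p)/q}$, so that $A_\rho/\rho\lesssim\rho^{(q-p)/q}\to 0$. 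Feeding $A_\rho/\rho\to 0$ into $\tfrac{q}{p}\tfrac{A_\rho}{\rho}+\tfrac{B_\rho}{\rho}=d_k(p,q;\rho)/\rho\to\lambda^D_k(q)$ gives $B_\rho/\rho\to\lambda^D_k(q)$, and therefore $\tilde\lambda^D_k(p,q;\rho)=A_\rho/\rho+B_\rho/\rho\to\lambda^D_k(q)$. I expect this to be the crux: the crude bounds $\tfrac{p}{q}d_k\le\tilde\lambda^D_k\,\rho\le d_k$ only trap $\tilde\lambda^D_k(p,q;\rho)$ between $\tfrac{p}{q}\lambda^D_k(q)$ and $\lambda^D_k(q)$, and it is precisely the decay $A_\rho/\rho\to 0$ that closes this gap and pins the limit down to $\lambda^D_k(q)$.

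Finally, the same a priori estimate $\|w_\rho\|_{1,q}^q=B_\rho\le M\rho\to 0$ shows $\|w_\rho\|_{1,q}\to 0$ as $\rho\to 0^+$. Hence, choosing any sequence $\rho_n\to 0^+$ and putting $u_n:=w_{\rho_n}$, $\mu_n:=\tilde\lambda^D_k(p,q;\rho_n)$, we obtain nontrivial solutions of \eqref{em5} with $\mu_n\to\lambda^D_k(q)$ and $\|u_n\|_{1,q}\to 0$, which is exactly the requirement of Definition \ref{bifurcation-theorem} for $(\lambda^D_k(q),0)$ to be a bifurcation point of \eqref{em5}. This completes the plan.
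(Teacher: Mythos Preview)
Your proof is correct and follows the same overall scheme as the paper: existence of the variational eigenpairs from Theorem~\ref{manysolutions}, the estimate on $d_k(p,q;\rho)$ from Lemma~\ref{bifurcainfinity}, and the eigenvalue identity $\tilde\lambda^D_k(p,q;\rho)\rho=A_\rho+B_\rho$ to force $\|w_\rho\|_{1,q}\to 0$. The paper's own proof is terse, delegating the convergence $\tilde\lambda^D_k(p,q;\rho)\to\lambda^D_k(q)$ to Remark~\ref{bifurcation-infty} (``proceding as in Theorem~\ref{bifurcation-from-zero}''), whereas you make this step fully explicit: from the upper bound you extract the uniform control $B_\rho\le M\rho$, then use H\"older and the structure of $A_\rho$ to obtain $A_\rho\lesssim\rho^{(2q-p)/q}$, hence $A_\rho/\rho\to 0$, which pins down the limit exactly. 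This bootstrap is a clean way to handle the lower bound, which in the $p<q$ setting is not immediate from the algebraic identity alone (unlike the $p>q$ case of Theorem~\ref{bifurcation-from-zero}, where the sign of $1-q/p$ gives it for free); your argument thus clarifies a point the paper leaves implicit.
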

\begin{proof}
In order to prove Theorem \ref{binfnon}, it suffices to prove that $\tilde{
\lambda}^D_k(p,q;\rho)\rightarrow\lambda^D_k(q,\rho)$ and $\|w_k\|_{1,q}\rightarrow 0$ as $\rho\rightarrow 0^+.$ 
The fact that $\tilde{
\lambda}^D_k(p,q;\rho)\rightarrow\lambda^D_k(q)$ as $\rho\rightarrow 0^+$ follows from Lemma \ref{bifurcainfinity} and Remark \ref{bifurcation-infty}. 

It remains to prove that $\|w_k\|_{1,q}\rightarrow 0$ as $\rho\rightarrow 0^+.$ For any $k>0,$ we have
\begin{equation*}
    \begin{split}
       \|w_k\|_{1,q}^{2(q-p)}\int_{\Omega}|\nabla w_k|^{p}~dx + \int_{\Omega}|\nabla w_k|^{q}dx &=\tilde{\lambda}^D_k(p,q;\rho)\int_{\Omega}|w_k|^q~dx
       \\
       & \le C_k \int_{\Omega}|w_k|^q~dx\\
       & = C_k\, \rho \to 0 \ , \ \hbox{ as } \ \rho \to 0 \end{split}
\end{equation*}
Therefore $\|w_k\|_{1,q} \to 0$, and since $p<q,$ by the H\"older inequality there exists a positive constant $C_1$ such that $\int_{\Omega}|\nabla w_k|^{p}~dx\leq C_1\|w_k\|^p_{1,q}$, and so also $\|w_k\|_{1,p} \to 0$. This completes the proof.
\end{proof}
\par \bigskip
\section{Multiplicity results}\label{mult}
In this section we prove a multiplicity result: we show that for fixed $\lambda \in (\lambda_k^D(q), \lambda_{k+1}^D(q))$ there exist at least $k$ pairs of eigenfunctions \
$\pm u_i^{\lambda}(p,q), i = 1,\dots,k, $ such that $(\lambda,\pm u_i^\lambda(p,q))$ solve equation \eqref{e2}, i.e. 
$$
\lambda = \lambda^D_1(p,q,\rho_1) = \dots = \lambda^D_k(p,q;\rho_k)\ , \ \hbox{ with } \ \rho_i = \int_\Omega |u_i^\lambda(p,q)|^qdx
.$$

We distinguish again the two cases $p<q$ and $p>q$. The proofs rely on variational methods.
\begin{theorem}\label{t3}
Let $1<q<p<\infty$ or $1<p<q<\infty$, and suppose that $\lambda\in(\lambda^D_k(q),\lambda^D_{k+1}(q))$. Then equation (\ref{e1}) has at least $k$ pairs of nontrivial solutions.
\end{theorem}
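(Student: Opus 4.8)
The plan is to produce the $k$ solution pairs by fixing the value $\lambda$ and intersecting it with the first $k$ nonlinear eigenvalue branches $\rho\mapsto\lambda_i^D(p,q;\rho)$, $i=1,\dots,k$, constructed in Theorems \ref{sequence1}--\ref{sequence2}; the two cases $p>q$ and $p<q$ are treated in parallel, differing only in the direction in which each branch is traversed. First I would record the endpoints of each branch. Writing the critical eigenfunction as $u=\rho^{1/q}v$ with $\int_\Omega|v|^qdx=1$, the identity $\lambda_i^D(p,q;\rho)\,\rho=\int_\Omega|\nabla u_i|^pdx+\int_\Omega|\nabla u_i|^qdx$ becomes
$$
\lambda_i^D(p,q;\rho)=\rho^{(p-q)/q}\int_\Omega|\nabla v|^pdx+\int_\Omega|\nabla v|^qdx .
$$
For $p>q$ the exponent is positive, so $\lambda_i^D(p,q;\rho)\to+\infty$ as $\rho\to+\infty$, while Theorem \ref{bifurcation-from-zero} gives $\lambda_i^D(p,q;\rho)\to\lambda_i^D(q)$ as $\rho\to0^+$. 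For $1<p<q$ the exponent is negative and the endpoints are exchanged: $\lambda_i^D(p,q;\rho)\to+\infty$ as $\rho\to0^+$, whereas $\lambda_i^D(p,q;\rho)\to\lambda_i^D(q)$ as $\rho\to+\infty$, which is exactly the bifurcation-from-infinity statement of Theorem \ref{bifurcation-from-infinity} (equivalently Theorem \ref{binfnon} read through the change of variables of Remark \ref{RMK}).

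The key technical step, and the one I expect to be the main obstacle, is the continuity of $\rho\mapsto\lambda_i^D(p,q;\rho)$. The natural route is through the critical values $c_i(p,q;\rho)$: the odd homeomorphism $u=\rho^{1/q}v$ maps $D_\rho(p,q)$ onto $D_1(p,q)$ and preserves the genus, so
$$
c_i(p,q;\rho)=\inf_{\substack{B\subset D_1(p,q)\\ \gamma(B)\geq i}}\ \sup_{v\in B}\Big(\frac{\rho^{p/q}}{p}\int_\Omega|\nabla v|^pdx+\frac{\rho}{q}\int_\Omega|\nabla v|^qdx\Big),
$$
an infimum--supremum of an integrand that is jointly continuous and monotone in $\rho$, which yields continuity (indeed monotonicity) of $c_i(p,q;\rho)$. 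Passing from $c_i$ to the eigenvalue is the delicate point: formally $\lambda_i^D(p,q;\rho)=q\,\tfrac{d}{d\rho}c_i(p,q;\rho)$ via the Lagrange-multiplier relation, but since minimax values are in general only locally Lipschitz one must argue that the eigenfunctions $u_i$ can be selected so that $\rho\mapsto\big(\int_\Omega|\nabla u_i|^pdx,\int_\Omega|\nabla u_i|^qdx\big)$ varies continuously, ruling out downward jumps at levels where the critical set is degenerate.

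Granting continuity, the conclusion is immediate. Since $\lambda\in(\lambda_k^D(q),\lambda_{k+1}^D(q))$ we have $\lambda_i^D(q)<\lambda$ for every $i=1,\dots,k$, so each branch lies below $\lambda$ at one endpoint and tends to $+\infty$ at the other, and the intermediate value theorem furnishes $\rho_i>0$ with $\lambda_i^D(p,q;\rho_i)=\lambda$; the associated pair $\pm u_i(p,q;\rho_i)$ then solves \eqref{e1} with $\int_\Omega|u_i|^qdx=\rho_i$. To obtain $k$ genuinely distinct pairs I would use the ordering of the branches near the bifurcation endpoint, where $\lambda_i^D(p,q;\rho)$ approximates the nondecreasing sequence $\lambda_i^D(q)$, to arrange $\rho_1>\dots>\rho_k$, so that the solutions carry distinct $L^q$-masses; whenever several levels meet $\lambda$ at a common $\rho$, the genus estimate $\gamma(K_c)\geq m$ from the last part of the proof of Theorem \ref{sequence1} supplies the missing pairs.

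Finally, for $p>q$ one can bypass the continuity issue altogether by a direct symmetric minimax on the fixed-$\lambda$ functional $E_\lambda$. Indeed $E_\lambda$ is coercive by Lemma \ref{coercivite}, bounded below, even, and satisfies the Palais--Smale condition (as in Theorem \ref{sequence1}); moreover, scaling the genus-$i$ sets $A_\eps$ of equation \eqref{dirichletlap} (chosen with $\sup_{A_\eps}\int_\Omega|\nabla u|^qdx\leq\lambda_i^D(q)+\eps<\lambda$) by a small factor $t$ makes $E_\lambda$ strictly negative there, because the degree-$q$ part dominates the positive degree-$p$ part as $t\to0$. Hence the minimax levels $\beta_i=\inf_{\gamma(A)\geq i}\sup_{A}E_\lambda$ satisfy $-\infty<\beta_i<0$ for $i=1,\dots,k$, and a Clark-type theorem delivers $k$ pairs of nontrivial critical points directly, giving an independent verification in this case.
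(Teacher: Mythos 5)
Your primary route --- fixing $\lambda$ and intersecting it with the branches $\rho\mapsto\lambda_i^D(p,q;\rho)$ via the intermediate value theorem --- hinges entirely on the continuity of these branches, which you yourself flag as the main obstacle and then do not establish. This is a genuine gap, not a technicality. The critical \emph{value} $c_i(p,q;\rho)$ is indeed continuous in $\rho$ (your scaling argument works), but the eigenvalue $\lambda_i^D(p,q;\rho)$ is the Lagrange multiplier attached to a particular minimax critical point: it satisfies $\lambda_i^D(p,q;\rho)\,\rho=\int_\Omega|\nabla u_i|^p+\int_\Omega|\nabla u_i|^q$, while $c_i=\tfrac1p\int_\Omega|\nabla u_i|^p+\tfrac1q\int_\Omega|\nabla u_i|^q$, so knowing $c_i(\rho)$ does not determine $\lambda_i^D(\rho)$ unless one controls the two gradient integrals separately, i.e.\ the critical point itself. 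Minimax critical points need not depend continuously on the constraint, and nothing in Theorems \ref{sequence1}--\ref{sequence2} excludes jumps of $\lambda_i^D(p,q;\cdot)$ at values of $\rho$ where the critical set changes; your proposed fix (a continuous selection of eigenfunctions) is exactly what is unavailable in general. Since for $1<p<q$ you offer no alternative, that half of the theorem is not proved by your argument. The paper avoids the issue entirely by working at fixed $\lambda$: for $p<q$ it runs a Ljusternik--Schnirelmann minimax for $J_\lambda$ on the Nehari manifold $\mathcal{N}_\lambda$, producing genus-$j$ subsets of $\mathcal{N}_\lambda$ for $j\le k$ by radially projecting genus-$j$ sets of the $q$-homogeneous problem (the projection $v\mapsto\rho(v)v$ is well defined precisely because $\lambda>\lambda_j^D(q)+\eps$ makes its denominator positive), verifies the Palais--Smale condition there, and invokes the abstract multiplicity result of Ambrosetti--Malchiodi.

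For $p>q$ your final paragraph essentially reproduces the paper's Part 2: Clark's theorem applied to the coercive, even, Palais--Smale functional, with negativity of the first $k$ minimax levels obtained by shrinking finite-dimensional approximations of genus-$k$ sets of the $q$-problem so that the negative $q$-homogeneous term dominates the $p$-homogeneous one as $s\to0$ (this needs $p>q$). That half is sound. A secondary weakness of your main route, even granting continuity: the claim that one can arrange $\rho_1>\dots>\rho_k$ from the ordering of the branches near the bifurcation endpoint is unjustified, since the branches need not be monotone or globally ordered away from the endpoint; distinctness of the $k$ pairs would still have to come from the genus estimate at coinciding levels, which you only sketch.
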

\begin{proof}
We split the proof into two parts.\\
\\
\textbf{Part 1:}  $p<q.$\\
In this case we will make use of \cite[Proposition 10.8]{AM}. We consider the functional $J_{\lambda}: W^{1,q}_0(\Omega)\backslash\{0\}\rightarrow\mathbb{R}$ associated to the problem (\ref{e1}) defined by
$$J_{\lambda}(u)=\frac{q}{p}\int_{\Omega}|\nabla u|^p~dx+\int_{\Omega}|\nabla u|^q~dx-\lambda\int_{\Omega}|u|^q~dx.$$
The functional $J_{\lambda}$ is not bounded from below on $W^{1,q}_0(\Omega)$, so we consider again the  constraint set $\mathcal{N}_{\lambda}$,  on which we minimize the functional $J_{\lambda}.$ We recall that the constraint set is given by
$$\mathcal{N}_{\lambda}:=\{u\in W^{1,q}_0(\Omega)\backslash\{0\}:~\langle J'_{\lambda}(u),u\rangle=0\}.$$
On $\mathcal{N}_{\lambda},$ we have $J_{\lambda}(u)=(\frac{1}{p}-\frac{1}{q})\displaystyle{\int_{\Omega}}|\nabla u|^p~dx>0.$ We clearly have that
 $J_{\lambda}$ is even and bounded from below on $\mathcal{N}_{\lambda}.$
 Next we show that every Palais-Smale (PS) sequence for $J_{\lambda}$ has a converging subsequence on $\mathcal{N}_{\lambda}.$ Let $(u_n)_{n\geq 0}$ be a (PS) sequence, i.e, $|J_{\lambda}(u_n)|\leq C$, for all $n$, for some $C>0$ and $J'_{\lambda}(u_n)\rightarrow 0$ in $W^{-1,q'}(\Omega)$ as $n\rightarrow +\infty,$ with $\frac{1}{q}+\frac{1}{q'}=1.$  
We first show that the sequence $(u_n)_{n\geq 0}$ is bounded on $\mathcal{N}_{\lambda}$.
Suppose that $(u_n)_{n\geq 0}$ is not bounded, so $\displaystyle{\int_{\Omega}}|\nabla u_n|^q~dx\rightarrow +\infty$ as $n\rightarrow +\infty.$ Since $J_{\lambda}(u_n)=(\frac{1}{p}-\frac{1}{q})\displaystyle{\int}_{\Omega}|\nabla u_n|^p~dx,$ we have $\displaystyle{\int}_{\Omega}|\nabla u_n|^p~dx\leq c.$ On $\mathcal{N}_{\lambda}$, we have
 \begin{equation}\label{ml1}
 0<\int_{\Omega}|\nabla u_n|^p~dx=\lambda\int_{\Omega}|u_n|^q~dx-\int_{\Omega}|\nabla u_n|^q~dx,
 \end{equation}
 and hence $\displaystyle{\int_{\Omega}}|u_n|^q~dx\rightarrow+\infty.$ Let $v_n=\frac{u_n}{\|u_n\|_q}$ then  $\displaystyle{\int_{\Omega}}|\nabla v_n|^q~dx< \lambda$ (using (\ref{ml1})) and hence $v_n$ is bounded in $W^{1,q}_0(\Omega).$ Therefore there exists $v_0\in W^{1,q}_0(\Omega)$ such that $v_n\rightharpoonup v_0$ in $W^{1,q}_0(\Omega)$ and $v_n\rightarrow v_0$ in $L^q(\Omega).$ Dividing (\ref{ml1}) by $\|u_n\|^p_q,$ we have
 $$\frac{\lambda\displaystyle{\int_{\Omega}}|u_n|^q~dx-\int_{\Omega}|\nabla u_n|^q~dx}{\|u_n\|^p_q}=\int_{\Omega}|\nabla v_n|^p~dx\rightarrow 0,$$ since $\lambda\displaystyle{\int_{\Omega}}|u_n|^q~dx-\int_{\Omega}|\nabla u_n|^q~dx=(\frac{1}{p}-\frac{1}{q})^{-1}J_{\lambda}(u_n)$, $|J_{\lambda}(u_n)|\leq C$ and $\|u_n\|^p_q\rightarrow +\infty.$ Now, since $v_n\rightharpoonup v_0$ in $W^{1,q}_0(\Omega)\subset W^{1,p}_0(\Omega),$ we infer that $$\int_{\Omega}|\nabla v_0|^p~dx\leq \liminf_{n\rightarrow +\infty}\int_{\Omega}|\nabla v_n|^p~dx=0,$$ and consequently $v_0=0.$ So $v_n\rightarrow 0$ in $L^q(\Omega)\subset L^p(\Omega)$ and this is a contradiction since $\|v_n\|_q=1.$ Thus $(u_n)_{n\geq 0}$ is bounded on $\mathcal{N}_{\lambda}.$ Now, we show that $u_n$ converges strongly to $u$ in $W^{1,q}_0(\Omega).$\\ We have $\displaystyle{\int_{\Omega}}|u_n|^{q-2}u_n~dx\rightarrow\displaystyle{\int_{\Omega}}|u|^{q-2}u~dx$ as $n\rightarrow\infty$ and since $J_{\lambda}'(u_n)\rightarrow 0$ in $W^{-1,q'}(\Omega),$ $u_n\rightharpoonup u$ in $W^{1,q}_0(\Omega),$ we also have $J_{\lambda}'(u_n)(u_n-u)\rightarrow 0$ and $J_{\lambda}'(u)(u_n-u)\rightarrow 0$ as $n\rightarrow +\infty.$ We recall that with the computations made in Remark \ref{r1}, we have for $1<p<\infty$ 
 $$\int_{\Omega}\left(|\nabla u_n|^{p-2}\nabla u_n-|\nabla u|^{p-2}\nabla u\right)\cdot \nabla(u_n-u)~dx\geq (\|u_n\|^{p-1}_{1,p}-\|u\|^{p-1}_{1,p})(\|u_n\|^{p}_{1,p}-\|u\|^{p}_{1,p})\geq 0
 $$
 Then, \vspace{-0.5cm}
 \begin{eqnarray*}
 \langle J'_{\lambda}(u_n)- J'_{\lambda}(u), u_n-u\rangle &=& q\left[\int_{\Omega}\left(|\nabla u_n|^{p-2}\nabla u_n-|\nabla u|^{p-2}\nabla u\right)\cdot \nabla(u_n-u)~dx\right]\\
 &+& q\left[\int_{\Omega}\left(|\nabla u_n|^{q-2}\nabla u_n-|\nabla u|^{q-2}\nabla u\right)\cdot \nabla(u_n-u)~dx\right]\\ &-& \lambda q\left[\int_{\Omega}\left(| u_n|^{q-2}u_n-| u|^{q-2} u\right)\cdot (u_n-u)~dx\right]\\
 &\geq & q\left[\int_{\Omega}\left(|\nabla u_n|^{q-2}\nabla u_n-|\nabla u|^{q-2}\nabla u\right)\cdot \nabla(u_n-u)~dx\right]\\
 &-& \lambda q\left[\int_{\Omega}\left(| u_n|^{q-2}u_n-| u|^{q-2} u\right)\cdot (u_n-u)~dx\right]. 
 \end{eqnarray*}
 Using Lemma \ref{inevec}, it follows that
 \begin{equation*}
     \begin{split}
         \langle J'_{\lambda}(u_n)- J'_{\lambda}(u), u_n-u\rangle &\geq C\|u_n-u\|^q_{1,q}-\lambda q\left[\int_{\Omega}\left(| u_n|^{q-2}u_n-| u|^{q-2} u\right)\cdot (u_n-u)~dx\right].
     \end{split}
 \end{equation*}
 Therefore $\|u_n-u\|_{1,q}\rightarrow 0$ as $n\rightarrow +\infty$ and $u_n$ converges strongly to $u$ in $W^{1,q}_0(\Omega).$
 \\
 \\
 Let $\Sigma=\{A\subset\mathcal{N}_{\lambda}:~A~\text{closed}~\text{and}~-A=A\}$ and $\Gamma_j=\{A\in\Sigma:~\gamma(A)\geq j\},$ where $\gamma(A)$ denotes the Krasnoselski's genus. We show that $\Gamma_j\neq\emptyset,$ for $j\in \{1,\dots,k\}$.
 \\
 \\
 Let $\lambda\in (\lambda^D_j(q),\lambda^D_{j+1}(q))$ and choose  $S^{\eps}_j\in\Sigma\cap\{\int_{\Omega}|u|^q~dx=1\}$  such that $$\sup_{v\in S^{\eps}_j}\int_{\Omega}|\nabla v|^qdx\leq \lambda^D_j(q)+\eps,~~\eps:=\frac{\lambda-\lambda^D_j(q)}{2}.$$
 Then, for $v\in S^{\eps}_j$
 we set
$$\rho(v)=\left[\frac{\int_{\Omega}|\nabla v|^p~dx}{\lambda\int_{\Omega}|v|^q~dx-\int_{\Omega}|\nabla v|^q~dx}\right]^{\frac{1}{q-p}},$$ with 
\begin{align*}
    \lambda\int_{\Omega}|v|^q~dx-\int_{\Omega}|\nabla v|^q~dx&\geq \lambda\int_{\Omega}|v|^q~dx-(\lambda^D_j(q)+\eps)\int_{\Omega}| v|^q~dx\\
    &=(\lambda-\lambda^D_j(q)-\eps)\int_{\Omega}| v|^q~dx\\
    &=[\lambda-\lambda^D_j(q)-(\frac{\lambda-\lambda^D_j(q)}{2})]\int_{\Omega}| v|^q~dx\\
    &=\frac{\lambda-\lambda^D_j(q)}{2}\int_{\Omega}| v|^q~dx>0,~~\text{for all}~~v\in S^{\eps}_j.
\end{align*} Hence, $\rho(v)v\in \mathcal{N}_{\lambda},$ and then $\rho(S^{\eps}_j)\in\Sigma,$ and $\gamma(\rho(S^{\eps}_j))=\gamma(S^{\eps}_j)=j$ for $1\leq j\leq k.$
\\

It is then standard \cite[Proposition 10.8]{AM} to conclude that $$\sigma_{\lambda,j}=\inf_{A \in \Gamma_j}\sup_{u\in A} J_{\lambda}(u),~~1\leq j\leq k,~~\text{for any }~k\in\mathbb{N}^*$$ yields $k$ pairs of nontrivial critical points for $J_{\lambda},$ which gives rise to $k$ nontrivial solutions of problem (\ref{e1}).
\\
\\
\textbf{Part 2:} $p>q.$\\
In this case, we will rely on the following theorem.
\par \medskip \noindent
\textbf{Theorem} \textup{(Clark, \cite{Cl}) \label{cl1}}.\\
{\it Let $X$ be a Banach space and $G\in C^1(X,\mathbb{R})$ satisfying the Palais-Smale condition with $G(0)=0.$ Let $\Gamma_k =\{~A\in\Sigma~:~\gamma(A)\geq k~\}$ with $\Sigma= \{~A\subset X~;~A=-A~\text{and}~A~\text{closed}~ \}.$ If $c_k=\inf\limits_{A\in \Gamma_k}\sup\limits_{u\in A}G(u)\in (-\infty, 0),$ then $c_k$ is a critical value.}
\par \medskip 
We consider the $C^1$ functional $J_{\lambda}: W^{1,p}_0(\Omega)\subset W^{1,q}_0(\Omega)\rightarrow\mathbb{R}$ $$J_{\lambda}(u)=\frac{q}{p}\int_{\Omega}|\nabla u|^p~dx+\int_{\Omega}|\nabla u|^q~dx-\lambda\int_{\Omega}|u|^q~dx.$$ 
Let $\Gamma_k=\{A\subset W^{1,q}_0(\Omega)\backslash\{0\},~~A~~\text{compact},~~A=-A,~~\gamma(A)\geq k\},$ and for $\eps>0$ small let $A_{\eps}\in\Gamma_k$ such that $$\sup_{\{u\in A_{\eps},~\int_{\Omega}|u|^qdx=1\}}\int_{\Omega}|\n u|^qdx\leq \lambda^D_k(q)+\eps.$$
We would like to show that 
\begin{equation}\label{ml2}
-\infty<\alpha_{\lambda,k}=\inf_{A\in\Gamma_k}\sup_{u\in A}J_{\lambda}(u)
\end{equation} 
are critical values for $J_{\lambda}.$
We clearly have that $J_{\lambda}(u)$ is  an even functional for all $u\in W^{1,p}_0(\Omega)$, and also $J_{\lambda}$ is bounded from below on $W^{1,p}_0(\Omega)$ since $J_{\lambda}$ is coercive on $W^{1,p}_0(\Omega)$. \\
\\
We show that $J_{\lambda}(u)$ satisfies the (PS) condition. Let $\{u_n\}$ be a Palais-Smale sequence, i.e., $|J_{\lambda}(u_n)|\leq M$ for all $n,$ $M>0$ and $J_{\lambda}'(u_n)\rightarrow 0$ in $W^{-1,p'}(\Omega)$ as $n\rightarrow\infty.$  We first show that $\{u_n\}$ is bounded in $W^{1,p}_0(\Omega).$ We have 
\begin{eqnarray*}
M&\geq & |C\|u_n\|_{1,p}^p-C'\|u_n\|^q_{1,p}|\geq | C\|u_n\|_{1,p}^{p-q}-C'| \|u_n\|_{1,p}^q,
\end{eqnarray*}
and so $\{u_n\}$ is bounded in $W^{1,p}_0(\Omega).$
Therefore, $u\in W^{1,p}_0(\Omega)$ exists such that, up to subsequences that we will denote by $(u_n)_n$ we have $u_n\rightharpoonup u$ in $W^{1,p}_0(\Omega)$ and $u_n\rightarrow u$ in $L^q(\Omega).$ Arguing as in Part 1, we obtain that $\|u_n-u\|_{1,p}\rightarrow 0$ as $n\rightarrow +\infty$, and so $u_n$ converges to $u$ in $W^{1,p}_0(\Omega)\subset W^{1,q}_0(\Omega) .$\\
\par \medskip
As in section \ref{S3}, we approximate $A_{\eps}$ by a finite-dimensional set.
Next, we show that there exists sets $D^{\eps}$ of genus greater of equal to $k$ such that $\sup\limits_{u\in D^{\eps}}J_{\lambda}(u)<0.$
For any $s\in (0,1)$, we define the set $D^{\eps}(s):=s\cdot (P_nA_{\eps})$ and so $\gamma(D^{\eps}(s))=\gamma(P_nA_{\eps})\geq k .$ We have, for any $s\in (0,1)$
\begin{eqnarray}\notag
\sup\limits_{u\in D^{\eps}}J_{\lambda}(u) &= & \sup\limits_{u\in P_nA_{\eps}}J_{\lambda}(su)\\\notag
&\leq& \sup\limits_{u\in P_nA_{\eps}}\left\{\frac{qs^p}{p}\int_{\Omega}|\nabla u|^pdx+s^q\int_{\Omega}|\nabla u|^qdx-\lambda s^q\int_{\Omega}|u|^qdx\right\}\\
&\leq & \sup\limits_{u\in P_nA_{\eps}}\left\{\frac{qs^p}{p}c(n)^p\|u\|^p_{1,q}+s^q(\lambda^D_k(q)+\eps-\lambda)\right\}<0\notag
\end{eqnarray}
for $s>0$ sufficiently small.\\
Finally, we conclude that $\alpha_{\lambda,k}$ are critical values for $J_{\lambda}$ thanks to Clark's Theorem.
\end{proof}
\par \bigskip

The contents of Theorems 2.5-2.6, Theorem 5.3 and Theorem 6.1  are illustrated in the following figure.
\begin{figure}	[!h]\centering
	\includegraphics[width=16cm,height=12cm]{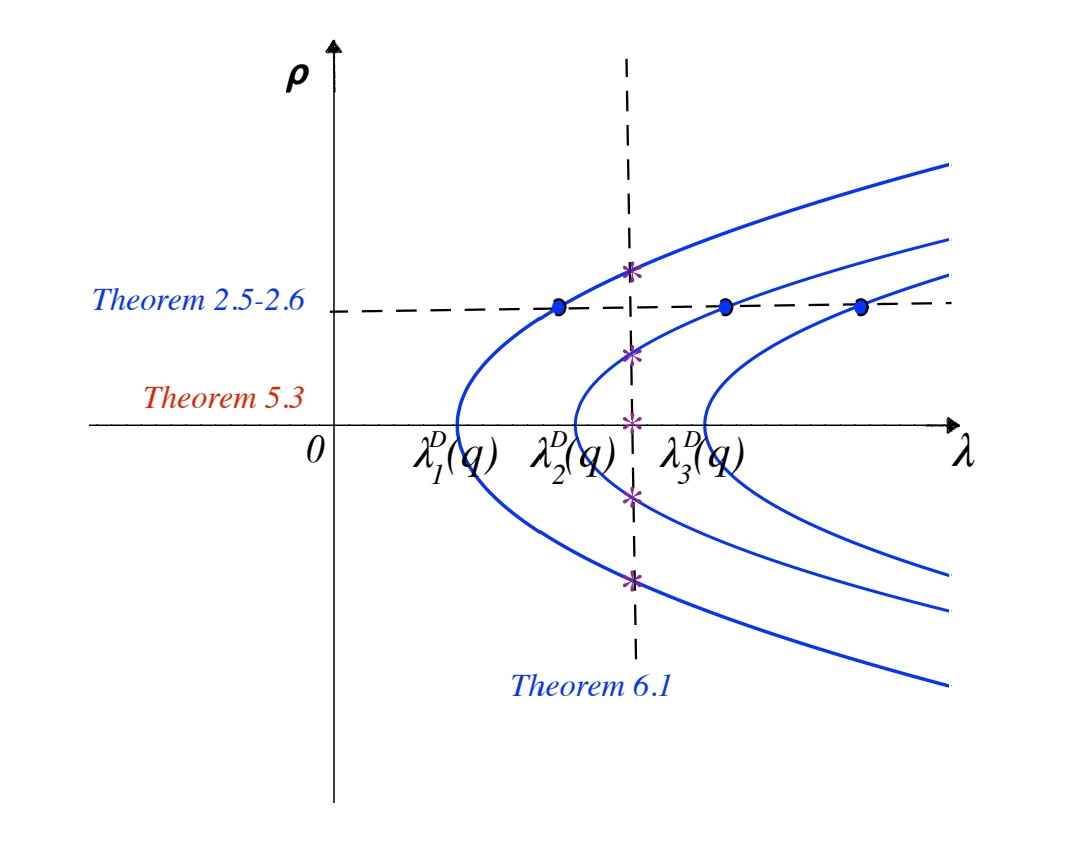}
	\caption{Illustration of the results of Theorem 2.5-2.6, Theorem 5.3 and Theorem 6.1.} \label{fig1}
\end{figure}
\newpage

\end{document}